\crefname{subsection}{Subsection}{Subsections}
\crefname{subsection}{subsection}{subsections}
\Crefname{subsection}{Subsection}{Subsections}
\newcommand{\myrepeat}[2]{%
  \begingroup
  \my@repeat@count=\z@
  \@whilenum\my@repeat@count<#1\do{#2\advance\my@repeat@count\@ne}%
  \endgroup
}
\newcommand*{\citeMacaulay}{\citelink{M2}{\code{Macaulay2}}}
\newenvironment{customProof}[1][Proof]{\let\oldProofName\proofname\renewcommand*{\proofname}{#1}\begin{proof}}{\end{proof}\renewcommand*{\proofname}{\oldProofName}}
\g@addto@macro\bfseries{\boldmath}
\def\thmCite{\@ifnextchar[{\@with}{\@without}}
\def\@with[#1]#2{{\normalfont\cite[#1]{#2}\;}}
\def\@without#1{{\normalfont\cite{#1}\;}}
\DeclareExpandableDocumentCommand{\IfNoValueOrEmptyTF}{mmm}
 {
  \IfNoValueTF{#1}{#2}
   {
    \tl_if_empty:nTF {#1} {#2} {#3}
   }
 }
\newcommand*{\valOrBlank}[1]{\IfNoValueOrEmptyTF{#1}{}{#1}}
\NewDocumentCommand{\mySum}{e{_^}}{%
	\sum_{\mathclap{\valOrBlank{#1}}}^{\mathclap{\valOrBlank{#2}}}\hspace{0.025cm}%
}
\NewDocumentCommand{\myCap}{e{_^}}{%
	\bigcap_{\mathclap{\valOrBlank{#1}}}^{\mathclap{\valOrBlank{#2}}}\hspace{0.05cm}%
}
\newcommand{\tcbrule}{%
\begin{tcolorbox}[
    sharp corners,
    boxrule=0mm,
    enhanced,
	frame hidden,
	height=0.4pt,
	colback=black
]
\end{tcolorbox}
}
\newcommand{\tcbdoublerule}{%
\begin{tcolorbox}[
    sharp corners,
    boxrule=0mm,
    enhanced,
    borderline north={0.4pt}{0pt}{black},
    borderline south={0.4pt}{0pt}{black},
	frame hidden,
	height=2pt
]
\end{tcolorbox}
}
\newcommand*{\defeq}{\hspace{-0.04cm}\mathrel{\rlap{%
                     \raisebox{0.4ex}{$\scriptstyle\m@th\cdot$}}%
                     \raisebox{-0.05ex}{$\scriptstyle\m@th\cdot$}}%
				 =}
\tikzset{square/.style={regular polygon,regular polygon sides=4}}
\theoremstyle:=definition,remark,plain\do{%
	\expandafter\g@addto@macro\csname th@\theoremstyle\endcsname{%
		\addtolength\thm@preskip\parskip
	}%
}
\let\oldproof\proof
\def\proof{\oldproof\unskip}
\def\footnoterule{
  \hrule \@width 9cm \kern 3\p@} 
\DeclareMathOperator{\Ann}{Ann}
\DeclareMathOperator{\Ass}{Ass}
\DeclareMathOperator{\dirlim}{\mathop{\lim_{\longrightarrow}}}
\DeclareMathOperator{\hgt}{height}
\DeclareMathOperator{\id}{Id}
\DeclareMathOperator{\grade}{grade}
\DeclareMathOperator{\Spec}{Spec}
\DeclareMathOperator{\Supp}{Supp}
\DeclareMathOperator{\LF}{\mathsf{LF}}
\newcommand*{\modcat}[1]{\ensuremath{\textstyle\mathbf{#1\text{\normalfont\bfseries -Mod}}}}
\DeclarePairedDelimiter{\smallAbs}{|}{|}
\newtheorem{theorem}{Theorem}[section]
\newtheorem{lemma}[theorem]{Lemma}
\newtheorem{corollary}[theorem]{Corollary}
\newtheorem{proposition}[theorem]{Proposition}
\newtheorem{definition}[theorem]{Definition}
\newtheorem{conjecture}[theorem]{Conjecture}
\newtheorem{notation}[theorem]{Notation}
\theoremstyle{definition}
\newtheorem{algorithm}[theorem]{Algorithm}
\newtheorem{example}[theorem]{Example}
\newtheorem{remark}[theorem]{Remark}
\theoremstyle{remark}
\newtheorem*{note}{Note}
\newcommand{\blockComment}[1]{}
\newcommand{\code}[1]{{\normalfont\ttfamily #1}}
\DeclareTextFontCommand{\emph}{\boldmath\bfseries}
\newcommand{\xra}[1]{\xrightarrow{\,\,#1\,\,}}
\newcommand{\xhra}[1]{\xhookrightarrow{\,\,#1\,\,}}
\newcommand{\citelink}[2]{{\hypersetup{linkbordercolor={0 1 0}}\hyperlink{cite.\therefsection @#1}{#2}}}
\title{\textsf{LF}-Covers and Binomial Edge Ideals of K\"{o}nig Type}
\author{David Williams\footnote{The author was supported by EPSRC grant EP/T517835/1}}
\date{\vspace{-0.3cm}}
\renewcommand*{\@fnsymbol}[1]{\@arabic{#1}}
\begin{document}

\maketitle

\begin{abstract}
	\noindent We give a combinatorial characterisation of connected graphs whose binomial edge ideals are of K\"{o}nig type, developed independently to the similar characterisation given in \cite[Lemma 5.3]{laclairInvariantsBinomialEdge2023}, and exhibit some classes of graphs satisfying our criteria.

	\vspace{0.35\baselineskip}
	\noindent For any connected Hamiltonian graph $G$ on $n$ vertices, we compute an explicit root of $H_{\mathcal{J}(G)}^{n-1}(R)$ as an $F$-finite $F$-module.
\end{abstract}

\begin{tcolorbox}
	Unless specified otherwise, let
	\begin{equation*}
		R=k[x_1,\ldots,x_n,y_1,\ldots,y_n]
	\end{equation*}
	for some field $k$ (of arbitrary characteristic) and $n\geq 2$. Furthermore, set $\delta_{i,j}\defeq x_iy_j-x_jy_i$ for $1\leq i<j\leq n$.

	Throughout this paper, all graphs will be finite, simple and undirected. For a graph $G$, we denote by $V(G)$ and $E(G)$ the sets of vertices and edges $\{i,j\}$ of $G$ respectively. Unless specified otherwise, all graphs will have at most $n$ vertices, with vertex set $\{1,\ldots,\smallAbs{V(G)}\}$.

	We denote by $P_m$, $C_m$ and $K_m$ the path, cycle and complete graphs on $m$ vertices respectively, with $K_0$ denoting the null graph with $V(K_0)=\varnothing$ and $E(K_0)=\varnothing$.
\end{tcolorbox}

\section{Preliminaries}

\subsection{Binomial Edge Ideals}

Our objects of study are the binomial edge ideals of graphs. These  were introduced in \cite{herzogBinomialEdgeIdeals2010}, and independently in \cite{ohtaniGraphsIdealsGenerated2011}.

We begin with their definition:

\begin{definition}\label{BEIDef}
	Let $G$ be a graph. We define the \emph{binomial edge ideal} of $G$ as
	\begin{equation*}
		\mathcal{J}(G)\defeq(\delta_{i,j}:\{i,j\}\in E(G))
	\end{equation*}
	We may also denote this ideal by $\mathcal{J}_G$.
\end{definition}

There is a rich interplay between the algebraic properties of these ideals and the combinatorial properties of the corresponding graph. For an overview of some of these interactions, see \cite[Chapter 7]{herzogBinomialIdeals2018}.

A key fact about binomial edge ideals is the following:

\begin{theorem}\thmCite[Corollary 2.2]{herzogBinomialEdgeIdeals2010}\label{BEIRadical}
	Binomial edge ideals are radical.
\end{theorem}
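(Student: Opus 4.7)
The plan is to reduce the claim to the standard fact that squarefree monomial ideals are radical, via a Gr\"{o}bner basis argument. Specifically, I would exhibit a monomial order under which the initial ideal $\mathrm{in}_<(\mathcal{J}(G))$ is generated by squarefree monomials; then, since squarefree monomial ideals are radical, and since an ideal whose initial ideal (with respect to some monomial order) is radical must itself be radical, $\mathcal{J}(G)$ is radical.

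Concretely, I would fix the lexicographic monomial order on $R$ induced by $x_1 > \cdots > x_n > y_1 > \cdots > y_n$. For $i<j$, the leading term of the generator $\delta_{i,j}$ is $x_i y_j$, which is squarefree. If the generators $\{\delta_{i,j} : \{i,j\} \in E(G)\}$ already formed a Gr\"{o}bner basis, the conclusion would be immediate; but in general they do not (consider two edges $\{i,j\}$ and $\{j,k\}$ with $i<j<k$, whose $S$-polynomial reduces non-trivially). I would therefore enlarge the generating set by associating, to each path $\pi : i = i_0, i_1, \ldots, i_r = j$ in $G$ between a pair of vertices $i<j$ satisfying a suitable ``admissibility'' condition on the internal vertices $i_k$ (roughly, each $i_k$ lies outside the interval $[i,j]$), a binomial $u_\pi \in \mathcal{J}(G)$ whose leading monomial is $\left(\prod_{i_k > j} x_{i_k}\right)\left(\prod_{i_l < i} y_{i_l}\right) x_i y_j$. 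By construction, this monomial is squarefree.

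The main obstacle is to verify that the enlarged collection $\{u_\pi\}_\pi$ is actually a Gr\"{o}bner basis of $\mathcal{J}(G)$. This is a combinatorial check via Buchberger's criterion: for any two admissible-path binomials $u_\pi$ and $u_{\pi'}$ the $S$-polynomial must reduce to zero modulo the set, which comes down to an analysis of how pairs of admissible paths in $G$ interact under concatenation, truncation, and splicing to produce further admissible paths (so that the relevant cancellations witness the reduction). Once that combinatorial step is in hand, every leading monomial is squarefree by construction, hence $\mathrm{in}_<(\mathcal{J}(G))$ is a squarefree monomial ideal and the conclusion follows.
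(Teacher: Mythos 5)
The paper does not prove this statement at all: it is imported verbatim as \cite[Corollary 2.2]{herzogBinomialEdgeIdeals2010}, so the relevant comparison is with the proof in that source. Your plan is precisely that argument: with respect to the lexicographic order induced by $x_1>\cdots>x_n>y_1>\cdots>y_n$, one attaches to each admissible path from $i$ to $j$ (internal vertices outside the interval $[i,j]$) a binomial $u_\pi\in\mathcal{J}(G)$ whose leading term is the squarefree monomial you wrote down, shows that these form a Gr\"{o}bner basis, and concludes that $\mathcal{J}(G)$ is radical because its initial ideal is a squarefree monomial ideal and an ideal with radical initial ideal is radical. So you have identified the correct order, the correct enlargement of the generating set, and the correct concluding principle; this is the same route as the cited proof, not a genuinely different one.

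However, as written your argument is a programme rather than a proof: the step you defer --- verifying Buchberger's criterion for the collection $\{u_\pi\}_\pi$ --- is the entire technical content of \cite[Theorem 2.1]{herzogBinomialEdgeIdeals2010}, and it requires a nontrivial case analysis of how two admissible paths interact (splicing and truncating them to produce new admissible paths witnessing the reductions to zero). Until that check is carried out, or the result is cited, the assertion that $\mathrm{in}_<(\mathcal{J}(G))$ is squarefree is unestablished, so the proposal has a genuine gap at exactly the point where the work lies. A minor further remark: the admissibility condition in the source also imposes a minimality requirement (no proper subset of the internal vertices already yields such a path), which is needed for the Gr\"{o}bner basis to be reduced; for radicality alone your weaker condition is harmless, since a subset of the ideal containing a Gr\"{o}bner basis is again a Gr\"{o}bner basis.
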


In fact, an explicit description of the primary decomposition can be given by purely combinatorial means, but we must first introduce some notation:

\begin{notation}
	Let $G$ be a graph, and $S\subseteq V(G)$. Denote by $c_G(S)$ the number of connected components of $G\setminus S$ (we will just write $c(S)$ when there is no confusion). Then we set
	\begin{equation*}
		\mathcal{C}(G)\defeq\{S\subseteq V(G):\text{\normalfont$S=\varnothing$ or $c(S\setminus\{v\})<c(S)$ for all $v\in S$}\}
	\end{equation*}
	That is, when we ``add back'' any vertex in $S\in\mathcal{C}(G)$, it must reconnect some components of $G\setminus S$.
\end{notation}

\begin{notation}\label{PSGDef}
	Let $G$ be a graph, and $S\subseteq V(G)$. Denote by $G_1,\ldots,G_{c(S)}$ the connected components of $G\setminus S$, and let $\tilde{G}_i$ be the complete graph with vertex set $V(G_i)$. Then we set
	\begin{equation*}
		P_S(G)\defeq(x_i,y_i:i\in S)+\mathcal{J}(\tilde{G}_1)+\cdots+\mathcal{J}(\tilde{G}_{c(S)})
	\end{equation*}
\end{notation}

\begin{proposition}
	$\mathcal{J}(K_S)$ is prime for any $S\subseteq\{1,\ldots,n\}$, and therefore $P_T(G)$ is prime for any graph $G$ and $T\subseteq V(G)$ also.
\end{proposition}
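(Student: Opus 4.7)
The plan is to prove the two statements in sequence, since the second will follow from the first by a tensor-product argument.

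First I would address the primality of $\mathcal{J}(K_S)$. The key observation is that if $S=\{i_1,\ldots,i_m\}$, then $\mathcal{J}(K_S)$ is generated by \emph{all} the binomials $\delta_{i,j}$ with $i,j\in S$, $i<j$; that is, it is exactly the ideal of $2\times 2$ minors of the generic matrix
\begin{equation*}
	\begin{pmatrix}x_{i_1} & x_{i_2} & \cdots & x_{i_m}\\ y_{i_1} & y_{i_2} & \cdots & y_{i_m}\end{pmatrix}.
\end{equation*}
The primality of such a determinantal ideal is a classical fact (for example, the quotient is the homogeneous coordinate ring of a Segre variety, or one can invoke the standard theorem on $2\times m$ generic matrices). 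Since the $2m$ variables appearing above are algebraically independent from the rest, $R/\mathcal{J}(K_S)$ is obtained from this classical determinantal quotient by adjoining the remaining $x_i$ and $y_i$ as free polynomial variables, which preserves the domain property. Hence $\mathcal{J}(K_S)$ is prime.

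Next I would deduce primality of $P_T(G)$ by computing the quotient. Killing the variables $x_i,y_i$ with $i\in T$ first gives
\begin{equation*}
	R/P_T(G)\;\cong\;\bigl(R/(x_i,y_i:i\in T)\bigr)/\bigl(\mathcal{J}(\tilde G_1)+\cdots+\mathcal{J}(\tilde G_{c(T)})\bigr),
\end{equation*}
and the crucial point is that the vertex sets $V(\tilde G_1),\ldots,V(\tilde G_{c(T)})$ are pairwise disjoint and disjoint from $T$, so the generators of the various $\mathcal{J}(\tilde G_i)$ involve disjoint blocks of variables. The quotient therefore factors as a tensor product over $k$ of the individual quotients $k[x_j,y_j:j\in V(\tilde G_i)]/\mathcal{J}(\tilde G_i)$, together with a polynomial ring in whatever indices of $\{1,\ldots,n\}$ lie outside $V(G)\cup T$. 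Each $\mathcal{J}(\tilde G_i)$ is prime by the first part (applied with $S=V(\tilde G_i)$), so each tensor factor is a $k$-domain; tensor products of domains over a field are domains, so $R/P_T(G)$ is a domain, giving primality of $P_T(G)$.

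The only real subtlety is the first step, so the ``main obstacle'' is more a matter of citation than of argument: one must appeal to the standard result that the ideal of maximal minors of a $2\times m$ generic matrix is prime. After that, the second part is a purely formal tensor-product computation using the disjointness of the vertex sets of the components of $G\setminus T$.
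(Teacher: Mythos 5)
Your first step coincides with the paper's entire proof: the paper simply cites \cite[Theorem 2.10]{brunsDeterminantalRings1988}, after observing (as you do) that $\mathcal{J}(K_S)$ is the ideal of $2\times 2$ minors of the generic $2\times\smallAbs{S}$ matrix in the variables $x_i,y_i$ with $i\in S$, so that part is fine.

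The deduction of primality of $P_T(G)$, however, has a genuine gap: the lemma you invoke, that a tensor product of domains over a field is a domain, is false for a general field $k$, and the paper allows $k$ to be arbitrary. For example, $\mathbb{Q}(\sqrt{2})\otimes_{\mathbb{Q}}\mathbb{Q}(\sqrt{2})\cong\mathbb{Q}(\sqrt{2})\times\mathbb{Q}(\sqrt{2})$ has zero divisors, so the ``purely formal tensor-product computation'' does not go through as stated. The conclusion is still reachable along your route, but you need more than the domain property of each factor: each factor $k[x_j,y_j:j\in V(\tilde{G}_i)]/\mathcal{J}(\tilde{G}_i)$ is in fact geometrically integral (its base change to $\bar{k}$ is again a determinantal ring, hence a domain, by the same classical theorem over $\bar{k}$), and the tensor product of a geometrically integral $k$-domain with an arbitrary $k$-domain is a domain. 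Alternatively---and this is in effect why the paper's one-line citation covers both claims at once---the cited theorem is stated for a generic matrix over an arbitrary base ring $B$ and asserts that the resulting determinantal ring is a domain whenever $B$ is; so you can avoid tensor products entirely by adjoining the blocks of variables for the components $\tilde{G}_1,\ldots,\tilde{G}_{c(T)}$ one at a time, at each stage taking $B$ to be the domain already constructed (together with any remaining free variables), and concluding by induction that $R/P_T(G)$ is a domain.
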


\begin{proof}
	This follows from \cite[Theorem 2.10]{brunsDeterminantalRings1988}.
\end{proof}

\begin{theorem}\thmCite[Corollary 3.9]{herzogBinomialEdgeIdeals2010}\label{BEIPD}
	Let $G$ be a graph. Then
	\begin{equation*}
		\mathcal{J}(G)=\myCap_{S\in\mathcal{C}(G)}P_S(G)
	\end{equation*}
	is the primary decomposition of $\mathcal{J}(G)$.
\end{theorem}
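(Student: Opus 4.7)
The plan is to establish both inclusions, leveraging \Cref{BEIRadical} to reduce the harder direction to identifying the minimal primes over $\mathcal{J}(G)$. Since each $P_S(G)$ is prime by the preceding proposition, it will suffice to show that the set of minimal primes over $\mathcal{J}(G)$ is exactly $\{P_S(G) : S \in \mathcal{C}(G)\}$.

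First I would handle the easy inclusion $\mathcal{J}(G) \subseteq \bigcap_{S \in \mathcal{C}(G)} P_S(G)$ by checking generators: for each edge $\{i,j\} \in E(G)$ and each $S \in \mathcal{C}(G)$, either $\{i,j\} \cap S \neq \varnothing$ (so $\delta_{i,j} \in (x_k,y_k : k \in S)$), or both $i,j$ lie in the same component $G_\ell$ of $G \setminus S$ (so $\delta_{i,j} \in \mathcal{J}(\tilde{G}_\ell)$).

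Next I would show each $P_S(G)$ with $S \in \mathcal{C}(G)$ is minimal over $\mathcal{J}(G)$. If $P_T(G) \subseteq P_S(G)$ for some $T \in \mathcal{C}(G)$, then comparing the linear parts yields $T \subseteq S$. If $T \subsetneq S$, pick $v \in S \setminus T$; the defining property of $\mathcal{C}(G)$ forces $v$ to have neighbours $a,b$ in two distinct components $G_\ell, G_m$ of $G \setminus S$. Then $a,v,b$ all lie in one component of $G \setminus T$, giving $\delta_{a,b} \in P_T(G)$, while $\delta_{a,b} \notin P_S(G)$ since it involves no variable indexed in $S$ and mixes indices from two distinct components; this contradiction establishes minimality.

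For the converse, given a minimal prime $\mathfrak{p} \supseteq \mathcal{J}(G)$, I would set $S \defeq \{i \in V(G) : x_i, y_i \in \mathfrak{p}\}$ and aim to prove $\mathfrak{p} = P_S(G)$ with $S \in \mathcal{C}(G)$. The main obstacle is showing $\mathcal{J}(\tilde{G}_\ell) \subseteq \mathfrak{p}$ for every component $G_\ell$ of $G \setminus S$; the key tool is the identity
\[
	y_b\,\delta_{a,c} \;=\; y_c\,\delta_{a,b} + y_a\,\delta_{b,c}
\]
(together with its analogue in the $x$ variables), which allows chaining minors along a path $a = v_0, v_1, \ldots, v_r = c$ inside $G_\ell$: since each interior vertex $v_i$ lies outside $S$, at least one of $x_{v_i}, y_{v_i}$ avoids $\mathfrak{p}$, and primality then propagates $\delta_{a,v_{i+1}} \in \mathfrak{p}$ by induction on $i$. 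This gives $P_S(G) \subseteq \mathfrak{p}$, and minimality of $\mathfrak{p}$ forces equality. Finally, $S \in \mathcal{C}(G)$ follows: if some $v \in S$ had $c(S \setminus \{v\}) = c(S)$, a direct check of generators shows $P_{S \setminus \{v\}}(G) \subsetneq P_S(G)$, contradicting minimality of $\mathfrak{p} = P_S(G)$.
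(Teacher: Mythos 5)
The paper does not actually prove \cref{BEIPD}: it is quoted from \cite[Corollary 3.9]{herzogBinomialEdgeIdeals2010}, so your argument can only be measured against that source. Your proof is correct and self-contained given \cref{BEIRadical}: radicality reduces everything to identifying the minimal primes of $\mathcal{J}(G)$; the generator check gives $\mathcal{J}(G)\subseteq P_S(G)$ for \emph{every} $S$ (which you need, and which your argument indeed delivers, even though you stated it only for $S\in\mathcal{C}(G)$); the degree-one comparison plus a vertex of $S$ reconnecting two components gives pairwise incomparability of the $P_S(G)$ with $S\in\mathcal{C}(G)$; and the identity $y_b\,\delta_{a,c}=y_c\,\delta_{a,b}+y_a\,\delta_{b,c}$ (with its $x$-analogue) correctly propagates minors along paths in $G\setminus S$, so every minimal prime is some $P_S(G)$ with $S\in\mathcal{C}(G)$. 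This is essentially the route of \cite{herzogBinomialEdgeIdeals2010}, which also rests on radicality and then determines which $P_S(G)$ are minimal (there via the intersection over all $S\subseteq V(G)$ together with the height formula recorded here as \cref{HeightLemma}); your chaining identity replaces the localization argument used there, a reasonable trade that avoids heights at the cost of checking incomparability combinatorially by hand. One point to tighten: the negation of $S\in\mathcal{C}(G)$ is that some $v\in S$ has $c(S\setminus\{v\})\geq c(S)$, not necessarily $c(S\setminus\{v\})=c(S)$ (if $v$ has no neighbour outside $S$, removing it from $S$ creates a new singleton component); the same verification $\mathcal{J}(G)\subseteq P_{S\setminus\{v\}}(G)\subsetneq P_S(G)$ disposes of that case as well, so this is a phrasing fix rather than a gap.
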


We will also make use of the following:

\begin{lemma}\thmCite[Lemma 3.1]{herzogBinomialEdgeIdeals2010}\label{HeightLemma}
	Let $G$ be a graph, and $S\in\mathcal{C}(G)$. Then
	\begin{equation*}
		\hgt_R(P_S(G))=\smallAbs{S}+n-c(S)
	\end{equation*}
\end{lemma}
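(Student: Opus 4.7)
The plan is to use the primeness of $P_S(G)$, established just above, to convert the height calculation into a dimension calculation: since $R$ is a polynomial ring (hence catenary domain) of dimension $2n$, we have $\hgt_R P_S(G) = 2n - \dim R/P_S(G)$. The strategy is then to decompose $R/P_S(G)$ by exploiting the fact that its defining generators split cleanly according to variable support.

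Specifically, killing the variables $\{x_i, y_i : i \in S\}$ identifies those variables with zero, leaving a polynomial subring on the variables indexed by $V(G) \setminus S$. Since the components $G_1, \ldots, G_{c(S)}$ of $G \setminus S$ partition $V(G) \setminus S$, and each $\mathcal{J}(\tilde{G}_j)$ involves only the variables indexed by $V(G_j)$, I would argue that
\begin{equation*}
R/P_S(G) \cong \bigotimes_{j=1}^{c(S)} k[x_i, y_i : i \in V(G_j)]/\mathcal{J}(K_{|V(G_j)|})
\end{equation*}
as $k$-algebras. This is a standard consequence of the pairwise disjoint variable supports, but should be stated carefully to be sure nothing is conflated.

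For each factor, I would invoke the fact that $\mathcal{J}(K_m)$ is precisely the ideal of $2 \times 2$ minors of the generic $2 \times m$ matrix with rows $(x_1,\ldots,x_m)$ and $(y_1,\ldots,y_m)$; the classical theory of determinantal ideals (from the same source \cite{brunsDeterminantalRings1988} as the preceding primeness citation) then gives that the quotient $k[x_1,\ldots,x_m,y_1,\ldots,y_m]/\mathcal{J}(K_m)$ has dimension $m+1$. Summing the dimensions of the factors yields $\dim R/P_S(G) = (n-|S|) + c(S)$, and subtracting from $2n$ gives $|S| + n - c(S)$, as required.

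The main step requiring actual attention is the tensor product decomposition — making sure the quotient of a polynomial ring by a sum of extended ideals supported on disjoint variable blocks is genuinely the corresponding tensor product over $k$. The determinantal dimension formula and the final arithmetic are routine, so no real obstacle arises beyond that bookkeeping.
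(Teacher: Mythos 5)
The paper gives no proof of this lemma at all --- it is quoted directly from \cite[Lemma 3.1]{herzogBinomialEdgeIdeals2010} --- so there is nothing internal to compare against; judged on its own, your argument is correct and is essentially the standard one from the cited source. There the height is computed additively over the pairwise disjoint blocks of variables ($2$ for each $i\in S$, and $\smallAbs{V(G_j)}-1$ for each determinantal ideal $\mathcal{J}(\tilde{G}_j)$), which is exactly the dual bookkeeping to your dimension count $\dim R/P_S(G)=\sum_j(\smallAbs{V(G_j)}+1)=(n-\smallAbs{S})+c(S)$ via the tensor decomposition; the determinantal dimension $m+1$ (including the degenerate case $m=1$, where $\mathcal{J}(K_1)=0$) and the additivity of dimension for tensor products of affine $k$-algebras are both standard, so no gap arises there. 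Two small points of hygiene: the identity $\hgt_R P_S(G)=2n-\dim R/P_S(G)$ should be justified by the dimension formula for affine domains (as in \cref{DimGradeRemark}), not by catenarity alone; and the stated formula presupposes $V(G)=\{1,\ldots,n\}$, as in the cited source --- your decomposition uses this implicitly when the blocks $S,V(G_1),\ldots,V(G_{c(S)})$ exhaust all $n$ indices, since any leftover free variable pairs would alter the count.
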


\begin{remark}\label{DimGradeRemark}
	\cref{HeightLemma} tells us that
	\begin{equation*}
		\hgt_R(\mathcal{J}(G))=\min\{\smallAbs{S}+n-c(S):S\in\mathcal{C}(G)\}
	\end{equation*}
	In particular, the height of $\mathcal{J}(G)$ is independent of the characteristic of $k$.

	Furthermore, since $R$ is affine, we have
	\begin{equation*}
		\dim(R/\mathcal{J}(G))=\dim(R)-\hgt_R(\mathcal{J}(G))=2n-\hgt_R(\mathcal{J}(G))
	\end{equation*}
	(see, for example, \cite[Corollary 13.4]{eisenbudCommutativeAlgebraView1995}), and since $R$ is Cohen-Macaulay we have
	\begin{equation*}
		\grade_R(\mathcal{J}(G))=\hgt_R(\mathcal{J}(G))
	\end{equation*}
	(see, for example, \cite[Corollary 2.1.4]{brunsCohenMacaulayRings2005}), so the dimension and grade of $\mathcal{J}(G)$ are also independent of the characteristic of $k$.
\end{remark}

\subsection{Prime Characteristic Tools}

\begin{tcolorbox}
	Throughout this subsection, $R$ denotes any regular ring of prime characteristic $p>0$.
\end{tcolorbox}

We first define the Frobenius Functor introduced in \cite{peskineDimensionProjectiveFinie1973}, which exploits the additional structure afforded to rings of prime characteristic by the Frobenius endomorphism:

\begin{definition}
	We define a ring $F_*R$ with elements $\{F_*r:r\in R\}$ which inherits the same addition and multiplication as $R$. We turn this into an $R$-module by setting $r\cdot F_*s=F_*(r^ps)$. Then the \emph{Frobenius functor} $F_R:\modcat{R}\to\modcat{F_*R}$ sends $M\mapsto F_*R\otimes_R M$. We can then identify $F_*R$ with $R$ to view this as a functor $F_R:\modcat{R}\to\modcat{R}$. We denote by $F_R^e$ the $e$th iterated application of $F_R$.
\end{definition}

This is a somewhat unusual construction, which leaves us with
\begin{equation*}
	 r\cdot(s\otimes_R m)=(rs)\otimes_R m
\end{equation*}
for any $r\in R$ and $s\otimes_R m\in F_R(M)$, and
\begin{equation*}
	r\otimes_R(sm)=(s^pr)\otimes_R m
\end{equation*}
for any $r,s\in R$ and $m\in M$.

\begin{note}
	We may identify $F_R(R)$ with $R$ via $r\otimes_R s\mapsto s^pr$. More generally, we have $F_R(R/I)\cong R/I^{[p]}$, where $I^{[p]}$ denotes the ideal $\{a^p:a\in I\}$  This follows from the calculation that, for a map $A:R^n\to R^m$ given by an $m\times n$ matrix $(a_{i,j})$, the map $F_R(A):R^n\to R^m$ is given by the $m\times n$ matrix $A^{[p]}=(a_{i,j}^p)$.
\end{note}

The following result is crucial in exploiting many of the properties of this functor:

\begin{theorem}[Kunz's Theorem]\thmCite[Corollary 2.7]{kunzCharacterizationsRegularLocal1969}\label{Kunz}
	A (commutative) Noetherian ring $R$ is regular if and only if it is reduced and $F_R:\modcat{R}\to\modcat{R}$ is exact.
\end{theorem}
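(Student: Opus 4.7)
The plan is to reduce both implications to the local case. The forward direction is a short Koszul homology calculation, while the converse is the substantive half and requires analysing the minimal free resolution of the residue field under iteration of the Frobenius.

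\textbf{Forward direction.} Suppose $R$ is regular. Since regular local rings are integral domains, $R$ is reduced. Exactness of the Frobenius functor is equivalent to flatness of $F_*R$ as an $R$-module, a local property, so I would localize and assume $(R,\mathfrak{m})$ is regular local of dimension $d$ with regular system of parameters $x_1,\ldots,x_d$. By the local criterion of flatness it suffices to prove $\mathrm{Tor}_i^R(R/\mathfrak{m},F_*R)=0$ for $i>0$. I would compute this using the Koszul resolution of $R/\mathfrak{m}$ by $x_1,\ldots,x_d$. Under the identification of $F_*R$ with $R$ and the twisted action $r\cdot F_*s=F_*(r^ps)$, multiplication by $x_i$ on $F_*R$ corresponds to multiplication by $x_i^p$ on $R$; hence $K_\bullet(x_1,\ldots,x_d;R)\otimes_R F_*R\cong K_\bullet(x_1^p,\ldots,x_d^p;R)$. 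Because $x_1^p,\ldots,x_d^p$ remains a regular sequence in the regular ring $R$, this Koszul complex is acyclic in positive degrees, giving the required Tor vanishing.

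\textbf{Converse.} Assume $R$ is reduced and $F_R$ is exact. Localize to assume $(R,\mathfrak{m},k)$ is local of dimension $d$ with embedding dimension $n\geq d$; by Auslander--Buchsbaum--Serre it suffices to prove $n=d$, i.e.\ to exhibit $d$ elements generating $\mathfrak{m}$. Pick minimal generators $y_1,\ldots,y_n$ of $\mathfrak{m}$ and consider the Koszul complex $K_\bullet(y_1,\ldots,y_n;R)$ whose homology modules $H_i$ have finite length. Applying $F_R^e$ yields, again by exactness, the complex $K_\bullet(y_1^{p^e},\ldots,y_n^{p^e};R)$ with homology $F_R^e(H_i)$. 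The heart of the argument is a length comparison: Hilbert--Samuel theory forces $\ell(R/\mathfrak{m}^{[p^e]})$ to grow as a polynomial of degree $d$ in $p^e$, whereas the Koszul homology at the top carries enough information to force growth consistent only with embedding dimension $n$. Matching these growth rates forces $n=d$, whence $R$ is regular.

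The main obstacle is making the length comparison in the converse rigorous: one must carefully track the interaction of $F_R^e$ with the $\mathfrak{m}$-adic filtration, and the reducedness hypothesis enters precisely to rule out nilpotent elements whose $p^e$-th powers vanish and thereby obscure the iterated Frobenius behaviour. The forward direction, by contrast, is essentially a mechanical computation once the correspondence $F_R(K_\bullet(\underline{x}))\cong K_\bullet(\underline{x}^p)$ is recognised.
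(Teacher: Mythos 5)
The paper does not prove this statement at all --- it is imported directly from Kunz's 1969 paper via the citation --- so the only question is whether your argument stands on its own. Your forward direction does: reducing to the local case, translating exactness of $F_R$ into flatness of $F_*R$, and computing $\operatorname{Tor}^R_i(k,F_*R)$ from the identification $K_\bullet(x_1,\ldots,x_d;R)\otimes_R F_*R\cong K_\bullet(x_1^p,\ldots,x_d^p;R)$ is the standard proof of the easy implication. The one thing to make explicit is that $F_*R$ is in general not finitely generated as an $R$-module, so the local criterion of flatness you invoke must be the version for modules that are finite over a Noetherian local $R$-algebra (here $F_*R$ over itself, via the local homomorphism $R\to F_*R$), not the finitely-generated-module version.

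The converse, however, has a genuine gap exactly where you place ``the heart of the argument''. What exactness actually gives is $H_i(K_\bullet(y_1^{p^e},\ldots,y_n^{p^e};R))\cong F_R^e(H_i)$ and, by filtering a finite-length module by copies of $k$, the multiplicativity $\ell(F_R^e(M))=\ell(M)\cdot\ell(R/\mathfrak{m}^{[p^e]})$. But no growth-rate comparison of the kind you describe can detect the embedding dimension $n$: every length in sight, $\ell(R/\mathfrak{m}^{[p^e]})$ and $\ell(F_R^e(H_i))$ alike, grows like a constant times $p^{e\dim R}$ regardless of $n$; the Koszul Euler characteristic gives nothing, since $\chi(y_1^{p^e},\ldots,y_n^{p^e};R)=0$ whenever $n>\dim R$ (Serre), so the identity $\chi(\underline{y}^{[p^e]};R)=\ell(R/\mathfrak{m}^{[p^e]})\,\chi(\underline{y};R)$ reads $0=0$; and ``the Koszul homology at the top'' carries no information, because $H_n(\underline{y};R)=(0:_R\mathfrak{m})=0$ as soon as $\operatorname{depth} R>0$. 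The most these observations yield (using Monsky's theorem on Hilbert--Kunz functions) is $\ell(R/\mathfrak{m}^{[p^e]})=p^{e\dim R}$ for all $e$; but deducing regularity from this equality is itself the hard content of Kunz's theorem --- his original proof passes to the completion, uses Cohen's structure theorem and an induction on dimension, and the asymptotic variant ($e_{HK}=1$ implies regular) is a much later theorem of Watanabe and Yoshida requiring unmixedness. As written, your converse therefore assumes what it must prove, and would need to be replaced by one of the genuine arguments (Kunz's inequality $\ell(R/\mathfrak{m}^{[q]})\geq q^{\dim R}$ with equality characterising regularity, or the minimal-free-resolution argument showing $\operatorname{pd}_R k<\infty$). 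Incidentally, reducedness is not where the difficulty hides: it already follows from exactness of $F_R$, by applying $F_R$ to the injection $R/\operatorname{Ann}_R(a)\hookrightarrow R$, $1\mapsto a$, for any $a$ with $a^p=0$.
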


We now define $F$-finite $F$-modules, which were introduced in \cite{lyubeznikFmodulesApplicationsLocal1997}:

\begin{definition}
	We say that an $R$-module $\mathscr{M}$ is an \emph{$F$-finite $F$-module} if it is of the form
	\begin{equation*}
		\mathscr{M}=\dirlim\left[
		\begin{tikzcd}[column sep=0.7cm]
			M \arrow[r,"\beta"] & F_R(M) \arrow[r,"F_R(\beta)"] &[0.5cm] F_R^2(M) \arrow[r,"F_R^2(\beta)"] &[0.5cm] \cdots
		\end{tikzcd}
		\right]
	\end{equation*}
	for some finitely generated $R$-module $M$ and $R$-homomorphism $\beta:M\to F_R(M)$. In this case, we say that $\beta$ is a \emph{generating morphism} of $\mathscr{M}$. If $\beta$ is also injective, say that it is a \emph{root morphism} of $\mathscr{M}$, and that $M$ is a \emph{root} of $\mathscr{M}$. Every $F$-finite $F$-module has a root (see \cite[Proposition 2.3(c)]{lyubeznikFmodulesApplicationsLocal1997}).
\end{definition}

The key example of an $F$-finite $F$-module for our purposes is the following:

\begin{example}\label{LCFFinFMod}
	Let $\mathfrak{a}=(r_1,\ldots,r_d)$ be an ideal of $R$, and set $u=(r_1\cdots r_d)^{p-1}$. Then we have
	\begin{align*}
		H_\mathfrak{a}^d(R)&\cong\dirlim\left[R/\mathfrak{a}\xra{r_1\cdots r_d\cdot}R/\mathfrak{a}^{[2]}\xra{r_1\cdots r_d\cdot}R/\mathfrak{a}^{[3]}\xra{r_1\cdots r_d\cdot}\cdots\right]\\
		&\cong\dirlim\left[R/\mathfrak{a}\xra{u\cdot}R/\mathfrak{a}^{[p]}\xra{u^p\cdot}R/\mathfrak{a}^{[p^2]}\xra{u^{p^2}\cdot}\cdots\right]\\
		&\cong\dirlim\left[R/\mathfrak{a}\xra{u\cdot}F_R(R/\mathfrak{a})\xra{F_R(u\cdot)}F_R^2(R/\mathfrak{a})\xra{F_R^2(u\cdot)}\cdots\right]
	\end{align*}
	with the first isomorphism following from \cite[Exercise 5.3.7]{brodmannLocalCohomologyAlgebraic2013}.
\end{example}

Many properties of $F$-finite $F$-modules coincide with those of their roots, however we will only need one such correspondence for our purposes: that their associated primes agree.

This fact is well-known, we include a proof for completeness. We will make use of a result of Huneke and Sharp, \cite[Corollary 1.6]{hunekeBassNumbersLocal1993}, in doing so. This is stated in \cite{hunekeBassNumbersLocal1993} for regular local rings, but extends straightforwardly to the case of regular rings:

\begin{lemma}\label{FrobAssEqual}
	For any $R$-module $M$, we have $\Ass(M)=\Ass(F_R(M))$.
\end{lemma}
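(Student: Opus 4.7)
The plan is to reduce to the regular local case and then invoke the Huneke--Sharp result on preservation of minimal injective resolutions under Frobenius, from which equality of the $0$th Bass numbers (equivalently, associated primes) drops out immediately.

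For the reduction, observe that for any prime $\mathfrak{p}\subseteq R$, membership $\mathfrak{p}\in\Ass_R(M)$ is equivalent to $\mathfrak{p}R_\mathfrak{p}\in\Ass_{R_\mathfrak{p}}(M_\mathfrak{p})$, and similarly for $F_R(M)$. The Frobenius functor commutes with localization, i.e.\ $(F_R(M))_\mathfrak{p}\cong F_{R_\mathfrak{p}}(M_\mathfrak{p})$, using flatness of $R\to R_\mathfrak{p}$ together with the identification $(F_*R)_\mathfrak{p}\cong F_*(R_\mathfrak{p})$. Hence it suffices to assume $R$ is regular local and to show $\mathfrak{p}\in\Ass(M)\Leftrightarrow\mathfrak{p}\in\Ass(F_R(M))$ for every prime $\mathfrak{p}$.

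In the local setting, \cite[Corollary 1.6]{hunekeBassNumbersLocal1993} tells us that if $E^\bullet$ is a minimal injective resolution of $M$, then $F_R(E^\bullet)$ is a minimal injective resolution of $F_R(M)$, with all Bass numbers preserved --- the essential inputs being $F_R(E_R(R/\mathfrak{p}))\cong E_R(R/\mathfrak{p})$ for every prime $\mathfrak{p}$, together with exactness of $F_R$ furnished by Kunz's theorem (\cref{Kunz}). Since $\mathfrak{p}\in\Ass(N)$ if and only if the $0$th Bass number $\mu^0(\mathfrak{p},N)$ is nonzero, comparing the $0$th terms of the minimal injective resolutions of $M$ and $F_R(M)$ yields $\Ass(M)=\Ass(F_R(M))$. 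The substantive content is packaged inside the Huneke--Sharp corollary; the only subtlety is the extension from the local statement to arbitrary regular $R$, which is handled by the localization argument above.
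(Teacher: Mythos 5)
Your argument is correct and follows essentially the same route as the paper: localise at each prime, use that the Frobenius functor commutes with localisation, and invoke the Huneke--Sharp result \cite[Corollary 1.6]{hunekeBassNumbersLocal1993} in the regular local case to transfer associated primes back and forth. The extra unpacking of that corollary via minimal injective resolutions and $0$th Bass numbers is fine but not needed beyond what the citation already provides.
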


\begin{proof}
	Take any $\mathfrak{p}\in\Spec(R)$. Since $F_R$ commutes with localisation (see, for example, \cite[Remarks 1.0 (h)]{lyubeznikFmodulesApplicationsLocal1997}), we have $F_R(M)_\mathfrak{p}\cong F_{R_\mathfrak{p}}\hspace{-0.05cm}(M_\mathfrak{p})$ as $R_\mathfrak{p}$-modules.

	Furthermore, by \cite[Corollary 1.6]{hunekeBassNumbersLocal1993}, we have
	\begin{equation*}
		\Ass_{R_\mathfrak{p}}\hspace{-0.05cm}(F_{R_\mathfrak{p}}\hspace{-0.05cm}(M_\mathfrak{p}))=\Ass_{R_\mathfrak{p}}\hspace{-0.05cm}(M_\mathfrak{p})
	\end{equation*}
	Then
	\begin{align*}
		\mathfrak{p}\in\Ass_R(F_R(M))&\Longleftrightarrow\mathfrak{p}R_\mathfrak{p}\in\Ass_{R_\mathfrak{p}}\hspace{-0.05cm}(F_R(M)_\mathfrak{p})\\
		&\Longleftrightarrow\mathfrak{p}R_\mathfrak{p}\in\Ass_{R_\mathfrak{p}}\hspace{-0.05cm}(F_{R_\mathfrak{p}}\hspace{-0.05cm}(M_\mathfrak{p}))\\
		&\Longleftrightarrow\mathfrak{p}R_\mathfrak{p}\in\Ass_{R_\mathfrak{p}}\hspace{-0.05cm}(M_\mathfrak{p})\\
		&\Longleftrightarrow\mathfrak{p}\in\Ass_R(M)
	\end{align*}
	and we are done.
\end{proof}

\begin{proposition}\label{FirstRootAssInclusion}
	Suppose that $(R,\mathfrak{m})$ is local, and let $\mathscr{M}$ be an $F$-finite $F$-module with generating morphism $\beta:M\to F_R(M)$ for some finitely generated $R$-module $M$. Then
	\begin{equation*}
		\Ass_R(\mathscr{M})\subseteq\Ass_R(M)
	\end{equation*}
\end{proposition}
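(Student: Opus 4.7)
The plan is to unwind the direct limit defining $\mathscr{M}$, reduce the computation of $\Ann_R(x)$ for an $x\in\mathscr{M}$ to an annihilator at some finite stage $F_R^{e'}(M)$, and then apply \cref{FrobAssEqual} to transport the resulting associated prime back to $M$.

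Concretely, $\mathscr{M}$ is by definition the filtered colimit of the $F_R^e(M)$ along the maps $F_R^e(\beta)$, with canonical structure maps $\phi_e\colon F_R^e(M)\to\mathscr{M}$. Given any $\mathfrak{p}\in\Ass_R(\mathscr{M})$, pick $x\in\mathscr{M}$ with $\Ann_R(x)=\mathfrak{p}$ and lift it to $x_e\in F_R^e(M)$ for some $e\geq0$, so that $x=\phi_e(x_e)$. Writing $\beta^{(k)}\colon F_R^e(M)\to F_R^{e+k}(M)$ for the composition of the next $k$ transition maps, the standard characterisation of vanishing in a filtered colimit gives
\begin{equation*}
    \Ann_R(x)=\bigcup_{k\geq0}\Ann_R\bigl(\beta^{(k)}(x_e)\bigr),
\end{equation*}
where the annihilators on the right are taken in $F_R^{e+k}(M)$. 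Because each transition map is $R$-linear, the ideals on the right form a nested ascending chain, which stabilises by the Noetherianity of $R$. Hence for all sufficiently large $k$ we have $\mathfrak{p}=\Ann_R\bigl(\beta^{(k)}(x_e)\bigr)$ as an annihilator in $F_R^{e+k}(M)$, so $\mathfrak{p}\in\Ass_R(F_R^{e+k}(M))$.

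Finally, $(e+k)$-fold application of \cref{FrobAssEqual} yields $\Ass_R(F_R^{e+k}(M))=\Ass_R(M)$, and we conclude $\mathfrak{p}\in\Ass_R(M)$. There is no real obstacle here: the argument consists only of filtered-colimit bookkeeping, the Noetherian stabilisation of the ascending chain of annihilators, and the Frobenius-invariance of associated primes provided by \cref{FrobAssEqual}. The one step worth double-checking is the stabilisation, which turns ``the annihilator equals some annihilator at some finite stage'' into ``the annihilator equals the annihilator at a single finite stage''---this is the only place where the ambient regularity of $R$ is required, and only through its Noetherianity.
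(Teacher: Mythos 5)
Your proof is correct and follows essentially the same route as the paper's: unwind the colimit, show the annihilator of a lift stabilises at some finite stage $F_R^{e+k}(M)$, and transport the associated prime back to $M$ by iterating \cref{FrobAssEqual}. The only (cosmetic) difference is that you obtain the stable stage via the ascending chain condition on the chain of annihilators, whereas the paper writes $\mathfrak{p}=(a_1,\ldots,a_t)$ and kills each generator at a finite stage before taking the maximum --- both are just Noetherianity (note, though, that regularity does enter elsewhere, through the Huneke--Sharp input to \cref{FrobAssEqual}).
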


\begin{proof}
	Take any $\mathfrak{p}\in\Ass_R(\mathscr{M})$, so there exists some $\overline{x}\in\mathscr{M}$ such that $\Ann_R(\overline{x})=\mathfrak{p}$. Now, $\overline{x}$ corresponds to the image in $\mathscr{M}$ of some $x\in F_R^e(M)$ for some $e\geq 0$, and we may take $e$ to be minimal.

	Let
	\begin{equation*}
		\psi_i=F_R^{i-1}(\beta)\circ\cdots\circ F_R^e(\beta):F_R^e(M)\to F_R^i(M)
	\end{equation*}
	for each $i>e$, and set $\psi_e=\id_{F_R^e(M)}$. We then have
	\begin{equation*}
		\Ann_R(\overline{x})=\{r\in R:\psi_i(rx)=0\text{ for some }i\geq e\}
	\end{equation*}
	Since $R$ is Noetherian we can write $\mathfrak{p}=(a_1,\ldots,a_t)$ for some $a_i\in R$ and $t\geq 1$. Then, for each $1\leq i\leq t$, there exists some $l_i\geq e$ such that $\psi_{l_i}(a_ix)=0$.

	Now, let $l=\max\{l_i:1\leq i\leq t\}$. Then
	\begin{equation*}
		a_i\psi_l(x)=\psi_l(a_ix)=0
	\end{equation*}
	for each $1\leq i\leq t$, so setting $y=\psi_l(x)\in F_R^l(M)$ we have $\mathfrak{p}\subseteq\Ann_R(y)$.

	Conversely, if $r\in\Ann_R(y)$, then
	\begin{equation*}
		\psi_l(rx)=r\psi_l(x)=ry=0
	\end{equation*}
	and so $r\in\Ann_R(\overline{x})=\mathfrak{p}$. Then $\Ann_R(y)\subseteq\mathfrak{p}$, so $\Ann_R(y)=\mathfrak{p}$, and therefore $\mathfrak{p}\in\Ass_R(F_R^l(M))$.

	Then repeatedly applying \cref{FrobAssEqual} yields $\mathfrak{p}\in\Ass_R(M)$, and the result follows.
\end{proof}

\begin{lemma}\label{RootAssEqual}
	Suppose that $(R,\mathfrak{m})$ is local, and let $\mathscr{M}$ be an $F$-finite $F$-module with root morphism $\beta:M\hookrightarrow F_R(M)$ for some finitely generated $R$-module $M$. Then
	\begin{equation*}
		\Ass_R(\mathscr{M})=\Ass_R(M)
	\end{equation*}
\end{lemma}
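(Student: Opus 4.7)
The plan is to show both inclusions, with the inclusion $\Ass_R(\mathscr{M}) \subseteq \Ass_R(M)$ being handed to us directly by \cref{FirstRootAssInclusion} (since a root morphism is in particular a generating morphism), so all the work is in establishing the reverse containment $\Ass_R(M) \subseteq \Ass_R(\mathscr{M})$.

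For that, the key observation is that injectivity of $\beta$ propagates through the entire defining direct system. Since $R$ is regular (we are working in the prime-characteristic setting of this subsection), Kunz's theorem (\cref{Kunz}) tells us that $F_R$ is an exact functor, so $F_R^i(\beta)$ is injective for every $i \geq 0$. Consequently, every map in the directed system
\begin{equation*}
	M \xra{\beta} F_R(M) \xra{F_R(\beta)} F_R^2(M) \xra{F_R^2(\beta)} \cdots
\end{equation*}
is injective, and composing these I obtain injective structure maps from $M$ into every $F_R^i(M)$.

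Next I use the fact that filtered colimits in $\modcat{R}$ are exact, so the canonical map from $M$ into the colimit $\mathscr{M}$ is itself injective, i.e. $M$ is (isomorphic to) an $R$-submodule of $\mathscr{M}$. It is a standard consequence of the definition of associated primes that passing to a submodule can only shrink the set of associated primes in the opposite direction, giving $\Ass_R(M) \subseteq \Ass_R(\mathscr{M})$.

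Combining this with \cref{FirstRootAssInclusion} gives the equality. I expect no genuine obstacle here; the only subtlety is ensuring one appeals to Kunz to know that the root morphism remains injective after applying iterates of $F_R$, since without regularity $F_R$ need not preserve injections and the proof would break down.
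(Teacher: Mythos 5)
Your proposal is correct and takes essentially the same route as the paper: both obtain $\Ass_R(M)\subseteq\Ass_R(\mathscr{M})$ from the fact that the root morphism makes the canonical map $M\to\mathscr{M}$ injective, and then conclude by \cref{FirstRootAssInclusion}. The only difference is that you spell out why the canonical map is injective (Kunz's theorem giving exactness of $F_R$, plus exactness of direct limits), a step the paper's proof leaves implicit in the phrase ``the canonical inclusion is an injection''.
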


\begin{proof}
	Since $M$ is a root of $\mathscr{M}$ the canonical inclusion is an injection, so $M\subseteq\mathscr{M}$. Then we have
	\begin{equation*}
		\Ass_R(M)\subseteq\Ass_R(\mathscr{M})
	\end{equation*}
	and so we are done by \cref{FirstRootAssInclusion}.
\end{proof}

\section{\textsf{LF}-Covers and Ideals of K\"{o}nig Type}

\subsection{The Main Theorem}

\subsubsection{Statement \& Preliminaries}

The notion of ideals of K\"{o}nig type was introduced by Herzog, Hibi, and Moradi in \cite{herzogGradedIdealsKonig2021} as a generalisation of K\"{o}nig graphs, for which the matching number is equal to the vertex cover number (see \cite[Section 1]{herzogGradedIdealsKonig2021} for the details). These ideal possess some desirable properties; for example, in the case of binomial edge ideals, their Cohen-Macaulayness is independent of the characteristic of the base field (see \cite[Corollary 3.8]{herzogGradedIdealsKonig2021}). There are several ways to define these ideals, but for binomial edge ideals there is a particularly elegant characterisation. We first introduce some terminology and notation:

\begin{definition}
	We say that a graph is a \emph{linear forest} if every connected component is a path. For a graph $G$, we say that a linear forest in $G$ is \emph{maximal} if no other linear forest in $G$ has a greater number of edges.
\end{definition}

\begin{note}
	In \cite{herzogGradedIdealsKonig2021} and \cite{laclairInvariantsBinomialEdge2023}, the term \emph{semi-path} is used in place of linear forest, however linear forest appears the more commonly used graph-theoretic term, and so we adopt it here.
\end{note}

\begin{notation}
	For a graph $G$, we denote by $\LF(G)$ the number of edges of a maximal linear forest in $G$.
\end{notation}

\begin{definition}\thmCite[Theorem 3.5]{herzogGradedIdealsKonig2021}
	Let $G$ be a graph on $n$ vertices. We say that $\mathcal{J}(G)$ is of \emph{K\"{o}nig type} if and only if
	\begin{equation*}
		\grade_R(\mathcal{J}(G))=\LF(G)
	\end{equation*}
\end{definition}

The left hand side of this equality is stated in \cite[Theorem 3.5]{herzogGradedIdealsKonig2021} as $2n-\dim(R/\mathcal{J}(G))$, but this is equal to $\grade_R(\mathcal{J}(G))$ by \cref{DimGradeRemark}.

\pagebreak

Ideals of K\"{o}nig type have several desirable properties. In the case of binomial edge ideals of K\"{o}nig type, one such property is that their Cohen-Macaulayness is independent of the characteristic of the base field $k$ (\cite[Corollary 3.8]{herzogGradedIdealsKonig2021}). It is of interest then to characterise these ideals.

To this end, we introduce the notion of $\LF$-covers:

\begin{definition}\label{LFCovDef}
	Let $G$ be a graph. If there exists a maximal linear forest $F$ in $G$, and a set $S\subseteq V(F)$ such that:
	\begin{enumerate}[ref=(\arabic*)]
		\item\label{LFCovDefCrit1} No vertex in $S$ is a leaf of $F$.
		\item\label{LFCovDefCrit2} No two vertices in $S$ are adjacent in $F$.
		\item\label{LFCovDefCrit3} For every edge $\{i,j\}\in E(G)$, one of the following holds:
			\begin{enumerate}[label=\roman*)]
				\item\label{LFCovDefCrit3i} At least one of $i$ or $j$ belongs to $S$. In this case, we say that this vertex \emph{covers} $\{i,j\}$.
				\item\label{LFCovDefCrit3ii} Both $i$ and $j$ belong to the same connected component of $F\setminus S$.
			\end{enumerate}
	\end{enumerate}
	then we say that $(F,S)$ is an \emph{$\LF$-cover} of $G$, and that $G$ is \emph{$\LF$-coverable}.
\end{definition}

This notion is similar to that introduced in \cite[Lemma 5.3]{laclairInvariantsBinomialEdge2023}, with our final criterion expressed differently, however the techniques we use are very different to those of \cite{laclairInvariantsBinomialEdge2023}.

To illustrate this concept, we consider several examples:

\begin{example}
	Let
	\begin{equation*}
		G=\quad\begin{tikzpicture}[x=0.85cm,y=0.85cm,every node/.style={circle,draw=black,fill=black,inner sep=0pt,minimum size=5pt},label distance=0.15cm,line width=0.25mm,baseline={([yshift=-0.5ex]current bounding box.center)}]
        	\node(1)[label=90:$1$] at (0,0) {};
			\node(2)[label=105:$2$] at ([shift=(-135:1)]1) {};
			\node(3)[label=75:$3$] at ([shift=(-45:1)]1) {};
			\node(4)[label=0:$4$] at ([shift=(-45:1)]2) {};
			\node[label=180:$5$](5) at ([shift=(180:1)]2) {};
			\node(6)[label=0:$6$] at ([shift=(0:1)]3) {};
			\node[label=-135:$7$](7) at ([shift=(-135:1)]4) {};
			\node(8)[label=-45:$8$] at ([shift=(-45:1)]4) {};

	    	\foreach \from/\to in {1/2,1/3,2/4,2/5,3/4,3/6,4/7,4/8}
        		\draw[-] (\from) -- (\to);
    	\end{tikzpicture}
	\end{equation*}
	Then $G$ has a unique maximal linear forest
	\begin{equation*}
		F=\quad\begin{tikzpicture}[x=0.85cm,y=0.85cm,every node/.style={circle,draw=black,fill=black,inner sep=0pt,minimum size=5pt},label distance=0.15cm,line width=0.25mm,baseline={([yshift=-0.5ex]current bounding box.center)}]
        	\node(1)[label=90:$1$] at (0,0) {};
			\node(2)[label=105:$2$] at ([shift=(-135:1)]1) {};
			\node(3)[label=75:$3$] at ([shift=(-45:1)]1) {};
			\node(4)[label=0:$4$] at ([shift=(-45:1)]2) {};
			\node[label=180:$5$](5) at ([shift=(180:1)]2) {};
			\node(6)[label=0:$6$] at ([shift=(0:1)]3) {};
			\node[label=-135:$7$](7) at ([shift=(-135:1)]4) {};
			\node(8)[label=-45:$8$] at ([shift=(-45:1)]4) {};

	    	\foreach \from/\to in {1/2,1/3,2/5,3/6,4/7,4/8}
        		\draw[-] (\from) -- (\to);
    	\end{tikzpicture}
	\end{equation*}
	and $\LF$-covers $(F,S)$ for
	\begin{equation*}
		S\in\{\{4\},\{1,4\},\{2,3\},\{2,4\},\{3,4\},\{2,3,4\}\}
	\end{equation*}
	In particular, the minimal $S$ with respect to containment (here circled) are
	\begin{equation*}
		\begin{tikzpicture}[x=0.85cm,y=0.85cm,every node/.style={circle,draw=black,fill=black,inner sep=0pt,minimum size=5pt},label distance=0.15cm,line width=0.25mm,baseline={([yshift=-0.5ex]current bounding box.center)}]
        	\node(1)[label=90:$1$] at (0,0) {};
			\node(2)[label=105:$2$] at ([shift=(-135:1)]1) {};
			\node(3)[label=75:$3$] at ([shift=(-45:1)]1) {};
			\node(4)[label=0:$4$] at ([shift=(-45:1)]2) {};
			\node[label=180:$5$](5) at ([shift=(180:1)]2) {};
			\node(6)[label=0:$6$] at ([shift=(0:1)]3) {};
			\node[label=-135:$7$](7) at ([shift=(-135:1)]4) {};
			\node(8)[label=-45:$8$] at ([shift=(-45:1)]4) {};

			\node[circle,draw,inner sep=3.5pt,line width=1pt,fill=none] at (4){};

	    	\foreach \from/\to in {1/2,1/3,2/5,3/6,4/7,4/8}
        		\draw[-] (\from) -- (\to);
    	\end{tikzpicture}\quad\quad
		\begin{tikzpicture}[x=0.85cm,y=0.85cm,every node/.style={circle,draw=black,fill=black,inner sep=0pt,minimum size=5pt},label distance=0.15cm,line width=0.25mm,baseline={([yshift=-0.5ex]current bounding box.center)}]
        	\node(1)[label=90:$1$] at (0,0) {};
			\node(2)[label=105:$2$] at ([shift=(-135:1)]1) {};
			\node(3)[label=75:$3$] at ([shift=(-45:1)]1) {};
			\node(4)[label=0:$4$] at ([shift=(-45:1)]2) {};
			\node[label=180:$5$](5) at ([shift=(180:1)]2) {};
			\node(6)[label=0:$6$] at ([shift=(0:1)]3) {};
			\node[label=-135:$7$](7) at ([shift=(-135:1)]4) {};
			\node(8)[label=-45:$8$] at ([shift=(-45:1)]4) {};

			\node[circle,draw,inner sep=3.5pt,line width=1pt,fill=none] at (2){};
			\node[circle,draw,inner sep=3.5pt,line width=1pt,fill=none] at (3){};

	    	\foreach \from/\to in {1/2,1/3,2/5,3/6,4/7,4/8}
        		\draw[-] (\from) -- (\to);
    	\end{tikzpicture}
	\end{equation*}
	and so such minimal $S$ need not contain the same number of vertices.

	To see that these are $\LF$-covers, note that $\{2,4\}$ and $\{3,4\}$ are the only edges of $G\setminus F$. In the first example, both $\{2,4\}$ and $\{3,4\}$ are covered by $4$, whilst in the second example, $\{2,4\}$ is covered by $2$, and $\{3,4\}$ is covered by $3$. In both examples, no two vertices in $S$ are adjacent in $F$, and no vertex in $S$ is a leaf of $F$.
\end{example}

\begin{example}
	$(P_n,\varnothing)$ is an $\LF$-cover of $C_n$ for any $n\geq3$, because both endpoints of the only remaining edge $\{1,n\}$ in $C_n\setminus P_n$ belong to the same connected component of $P_n\setminus\varnothing$ (this example is also trivially true in the cases that $n=1$ or $n=2$, since then $C_n=P_n$).
\end{example}

\pagebreak

\begin{example}
	Let
	\begin{equation*}
		G=\quad\begin{tikzpicture}[x=0.85cm,y=0.85cm,every node/.style={circle,draw=black,fill=black,inner sep=0pt,minimum size=5pt},label distance=0.15cm,line width=0.25mm,baseline={([yshift=-0.5ex]current bounding box.center)}]
        	\node(1)[label=117:$1$] at (0,0) {};
			\node(2)[label=90:$2$] at ([shift=(54:1)]1) {};
			\node(3)[label=90:$3$] at ([shift=(-18:1)]2) {};
			\node(4)[label=-90:$4$] at ([shift=(-90:1)]3) {};
			\node[label=-90:$5$](5) at ([shift=(-162:1)]4) {};
			\node(6)[label=180:$6$] at ([shift=(180:1)]1) {};
			\node[label=0:$7$](7) at ([shift=(36:1)]3) {};
			\node(8)[label=0:$8$] at ([shift=(-36:1)]4) {};

	    	\foreach \from/\to in {1/2,1/5,1/6,2/3,3/4,3/7,4/5,4/8}
        		\draw[-] (\from) -- (\to);
    	\end{tikzpicture}
	\end{equation*}
	It is easily checked that, up to isomorphism, the maximal linear forests in $G$ are
	\begin{equation*}
		F_1=\quad\begin{tikzpicture}[x=0.85cm,y=0.85cm,every node/.style={circle,draw=black,fill=black,inner sep=0pt,minimum size=5pt},label distance=0.15cm,line width=0.25mm,baseline={([yshift=-0.5ex]current bounding box.center)}]
        	\node(1)[label=117:$1$] at (0,0) {};
			\node(2)[label=90:$2$] at ([shift=(54:1)]1) {};
			\node(3)[label=90:$3$] at ([shift=(-18:1)]2) {};
			\node(4)[label=-90:$4$] at ([shift=(-90:1)]3) {};
			\node[label=-90:$5$](5) at ([shift=(-162:1)]4) {};
			\node(6)[label=180:$6$] at ([shift=(180:1)]1) {};
			\node[label=0:$7$](7) at ([shift=(36:1)]3) {};
			\node(8)[label=0:$8$] at ([shift=(-36:1)]4) {};

	    	\foreach \from/\to in {1/2,1/5,2/3,3/7,4/5,4/8}
        		\draw[-] (\from) -- (\to);
    	\end{tikzpicture}\quad\quad F_2=\quad\begin{tikzpicture}[x=0.85cm,y=0.85cm,every node/.style={circle,draw=black,fill=black,inner sep=0pt,minimum size=5pt},label distance=0.15cm,line width=0.25mm,baseline={([yshift=-0.5ex]current bounding box.center)}]
        	\node(1)[label=117:$1$] at (0,0) {};
			\node(2)[label=90:$2$] at ([shift=(54:1)]1) {};
			\node(3)[label=90:$3$] at ([shift=(-18:1)]2) {};
			\node(4)[label=-90:$4$] at ([shift=(-90:1)]3) {};
			\node[label=-90:$5$](5) at ([shift=(-162:1)]4) {};
			\node(6)[label=180:$6$] at ([shift=(180:1)]1) {};
			\node[label=0:$7$](7) at ([shift=(36:1)]3) {};
			\node(8)[label=0:$8$] at ([shift=(-36:1)]4) {};

	    	\foreach \from/\to in {1/2,1/6,2/3,3/7,4/5,4/8}
        		\draw[-] (\from) -- (\to);
    	\end{tikzpicture}
	\end{equation*}
	Note that $(F_1,S_1)$ and $(F_2,S_2)$ are $\LF$ covers of $G$ if and only if
	\begin{equation*}
		S_1,S_2\in\{\{1,3\},\{1,4\},\{1,3,4\}\}
	\end{equation*}
	It is not a coincidence that these collections agree: we will see in \cref{MainLFThm} that, for any graph $G$, $(F,S)$ being an $\LF$-cover of $G$ for some $S\subseteq V(G)$ is independent of the choice of maximal linear forest $F$.
\end{example}

Unfortunately, not all graphs are $\LF$-coverable (we will see more such examples in \cref{NonLFCovSubsection}):
\begin{example}\label{NetNoLFCovExample}
	Let
	\begin{equation*}
		G=\quad\begin{tikzpicture}[x=0.85cm,y=0.85cm,every node/.style={circle,draw=black,fill=black,inner sep=0pt,minimum size=5pt},label distance=0.15cm,line width=0.25mm,baseline={([yshift=-0.5ex]current bounding box.center)}]
       		\node(1)[label=180:$1$] at (0,0) {};
    		\node(2)[label=-90:$2$] at ([shift=(-120:1)]1) {};
        	\node(3)[label=-90:$3$] at ([shift=(0:1)]2) {};
			\node(4)[label=180:$4$] at ([shift=(90:1)]1) {};
			\node(5)[label=180:$5$] at ([shift=(-150:1)]2) {};
        	\node(6)[label=0:$6$] at ([shift=(-30:1)]3) {};

	  	  	\foreach \from/\to in {1/2,1/3,1/4,2/3,2/5,3/6}
				\draw[-] (\from.center) -- (\to.center);$$
    	\end{tikzpicture}
	\end{equation*}
	This is sometimes called the \emph{net} or \emph{$3$-sunlet}.

	Up to isomorphism, the maximal linear forests in $G$ are
	\begin{equation*}
		F_1=\quad\begin{tikzpicture}[x=0.85cm,y=0.85cm,every node/.style={circle,draw=black,fill=black,inner sep=0pt,minimum size=5pt},label distance=0.15cm,line width=0.25mm,baseline={([yshift=-0.5ex]current bounding box.center)}]
       		\node(1)[label=180:$1$] at (0,0) {};
    		\node(2)[label=-90:$2$] at ([shift=(-120:1)]1) {};
        	\node(3)[label=-90:$3$] at ([shift=(0:1)]2) {};
			\node(4)[label=180:$4$] at ([shift=(90:1)]1) {};
			\node(5)[label=180:$5$] at ([shift=(-150:1)]2) {};
        	\node(6)[label=0:$6$] at ([shift=(-30:1)]3) {};

	  	  	\foreach \from/\to in {1/2,1/3,2/5,3/6}
				\draw[-] (\from.center) -- (\to.center);
    	\end{tikzpicture}\quad\quad F_2=\quad\begin{tikzpicture}[x=0.85cm,y=0.85cm,every node/.style={circle,draw=black,fill=black,inner sep=0pt,minimum size=5pt},label distance=0.15cm,line width=0.25mm,baseline={([yshift=-0.5ex]current bounding box.center)}]
       		\node(1)[label=180:$1$] at (0,0) {};
    		\node(2)[label=-90:$2$] at ([shift=(-120:1)]1) {};
        	\node(3)[label=-90:$3$] at ([shift=(0:1)]2) {};
			\node(4)[label=180:$4$] at ([shift=(90:1)]1) {};
			\node(5)[label=180:$5$] at ([shift=(-150:1)]2) {};
        	\node(6)[label=0:$6$] at ([shift=(-30:1)]3) {};

	  	  	\foreach \from/\to in {1/2,1/4,2/5,3/6}
				\draw[-] (\from.center) -- (\to.center);
    	\end{tikzpicture}
	\end{equation*}
	Suppose that we had an $\LF$-cover $(F_1,S_1)$ of $G$. The only way to cover $\{1,4\}$ would be to include $1$ in $S_1$, since we cannot include $4$ in $S_1$ because it is a leaf of $F_1$. However $2$ and $3$ would then belong to separate connected components of $F_1\setminus S_1$, and so we would need to include either $2$ or $3$ in $S_1$ to cover $\{2,3\}$. But each of these are adjacent to $1$ in $F_1$, and so cannot be added to $S_1$.

	Now suppose that we had an $\LF$-cover $(F_2,S_2)$ of $G$. In order to cover $\{1,3\}$ and $\{2,3\}$, we would need to include both $1$ and $2$ in $S_2$, since we cannot add $3$ to $S_2$ because it is a leaf of $F_2$. However $1$ and $2$ are adjacent in $F_2$, and so cannot both belong to $S_2$.
\end{example}

\begin{note}
	By \hyperref[MainLFThmIndep]{the second claim} of \cref{MainLFThm}, to show that a graph $G$ is not $\LF$-coverable, it suffices to check a single maximal linear forest in $G$.
\end{note}

We can characterise $\LF$-covers algebraically:
\begin{lemma}\label{LFCovAssEquiv}
	Let $G$ be a graph on $n$ vertices, $F$ a maximal linear forest in $G$, and $S\subseteq V(F)$. Then $(F,S)$ is an $\LF$-cover of $G$ if and only if $S\in\mathcal{C}(F)$ and $\mathcal{J}(G)\subseteq P_S(F)$.
\end{lemma}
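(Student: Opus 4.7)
The plan is to prove both implications by working directly with the description $P_S(F) = (x_k, y_k : k \in S) + \mathcal{J}(\tilde{F}_1) + \cdots + \mathcal{J}(\tilde{F}_{c(S)})$ from \cref{PSGDef}, edge by edge. For the $(\Rightarrow)$ direction, assume $(F, S)$ is an $\LF$-cover. To verify $S \in \mathcal{C}(F)$, take any $v \in S$: condition (1) makes $v$ a non-leaf of the linear forest $F$, so $v$ has exactly two $F$-neighbours, and condition (2) places both outside $S$. Since $v$ is interior to its path-component of $F$, removing $v$ separates these neighbours into distinct components of $F \setminus S$, so reinserting $v$ merges them and $c_F(S \setminus \{v\}) = c_F(S) - 1$. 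The inclusion $\mathcal{J}(G) \subseteq P_S(F)$ is then a transliteration of condition (3): for $\{i,j\} \in E(G)$, if some endpoint $k \in \{i,j\}$ lies in $S$ then $\delta_{i,j} \in (x_k, y_k)$, and otherwise both endpoints lie in a common $V(F_k)$ and $\delta_{i,j} \in \mathcal{J}(\tilde{F}_k)$.

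For $(\Leftarrow)$, conditions (1) and (2) will follow from $S \in \mathcal{C}(F)$ alone by symmetric casework on reinsertion. If $v \in S$ is a leaf or isolated vertex of $F$, then reinserting $v$ either creates a fresh isolated component (when its unique $F$-neighbour, if any, lies in $S$) or attaches $v$ to an existing component (when the neighbour lies outside $S$); neither case strictly decreases $c_F$, contradicting $S \in \mathcal{C}(F)$. Adjacent $u, v \in S$ reduce to the same dichotomy, since after reinserting $v$ the vertex $u$ is still missing and $v$ plays a leaf-like role with at most one available $F$-neighbour.

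The main obstacle is condition (3). Given $\{i, j\} \in E(G)$ with neither endpoint in $S$, the hypothesis $\delta_{i,j} \in P_S(F)$ reduces modulo $(x_k, y_k : k \in S)$ to $\delta_{i,j} \in \mathcal{J}(\tilde{F}_1) + \cdots + \mathcal{J}(\tilde{F}_{c(S)})$ inside $k[x_l, y_l : l \notin S]$, and I must conclude that $i, j$ lie in a common $F_k$. I will handle this via the multigrading that assigns $\deg x_l = \deg y_l = e_k \in \mathbb{Z}^{c(S)}$ whenever $l \in V(F_k)$. Every generator of the sum is multi-homogeneous of \textit{diagonal} degree $2 e_k$, whereas $\delta_{i,j}$ is multi-homogeneous of off-diagonal degree $e_a + e_b$ when $i \in V(F_a)$ and $j \in V(F_b)$ with $a \neq b$. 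Since $e_a + e_b - 2 e_k$ fails to be componentwise non-negative for any choice of $k$, no multi-homogeneous multiple of a generator can contribute in degree $e_a + e_b$, so the corresponding graded component of the ideal vanishes and we must have $a = b$.
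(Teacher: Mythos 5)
Your proposal is correct and follows essentially the same route as the paper, which likewise identifies criteria (1)--(2) with $S\in\mathcal{C}(F)$ and criterion (3) with $\delta_{i,j}\in P_S(F)$ edge by edge directly from the definition of $P_S(F)$. The only difference is that you spell out the details the paper dismisses as clear, in particular the multigraded argument showing $\delta_{i,j}\notin\mathcal{J}(\tilde{F}_1)+\cdots+\mathcal{J}(\tilde{F}_{c(S)})$ when $i,j\notin S$ lie in distinct components, which is a valid and tidy justification of that step.
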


\begin{proof}
	\hyperref[LFCovDefCrit1]{Criterion 1} and \hyperref[LFCovDefCrit2]{Criterion 2} of \cref{LFCovDef} are equivalent to saying that $S\in\mathcal{C}(F)$ since $F$ is a linear forest. We have that $\mathcal{J}(G)\subseteq P_S(F)$ if and only if $\delta_{i,j}\in P_S(F)$ for every edge $\{i,j\}\in E(G)$, which is clearly equivalent to \hyperref[LFCovDefCrit3]{Criterion 3} in \cref{LFCovDef} by the definition of $P_S(F)$, and the result follows.
\end{proof}

\pagebreak

Our aim is to prove the following:
\begin{tcolorbox}
	\vspace{-1.4\baselineskip}
	\begin{theorem}\label{MainLFThm}
		Let $G$ be a graph on $n$ vertices. Then $\mathcal{J}(G)$ is of K\"{o}nig type if and only if $G$ is $\LF$-coverable.

		\label{MainLFThmIndep}Furthermore, $(F,S)$ being an $\LF$-cover of $G$ for some $S\subseteq V(G)$ is independent of the choice of maximal linear forest $F$.
	\end{theorem}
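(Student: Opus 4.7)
The plan is to prove both statements in tandem via a height and counting argument. First I would establish a sub-lemma: for any linear forest $F$ and any $S \in \mathcal{C}(F)$, $\hgt_R P_S(F) = \smallAbs{E(F)}$. Since $F$ is a linear forest, $S \in \mathcal{C}(F)$ forces $S$ to be a set of pairwise non-adjacent internal vertices of the paths comprising $F$; removing $k$ such vertices from a single path breaks it into exactly $k+1$ subpaths, so summing over paths gives $c_F(S) = c_F(\varnothing) + \smallAbs{S}$, and \cref{HeightLemma} yields $\hgt_R P_S(F) = \smallAbs{S} + \smallAbs{V(F)} - c_F(S) = \smallAbs{V(F)} - c_F(\varnothing) = \smallAbs{E(F)}$.

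Combining this with $\mathcal{J}(F) \subseteq \mathcal{J}(G)$ for any maximal linear forest $F \subseteq G$ and \cref{DimGradeRemark} gives $\grade_R \mathcal{J}(G) \geq \LF(G)$. For the easy direction of the equivalence: if $(F, S)$ is an $\LF$-cover, \cref{LFCovAssEquiv} gives $\mathcal{J}(G) \subseteq P_S(F)$, so $\grade_R \mathcal{J}(G) = \hgt_R \mathcal{J}(G) \leq \hgt_R P_S(F) = \LF(G)$, forcing equality and thus König type.

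For the converse and the independence, the strategy is to show that, whenever $\mathcal{J}(G)$ is of K\"{o}nig type, $(F, T)$ is an $\LF$-cover of $G$ for \emph{any} maximal linear forest $F$ and \emph{any} $T \in \mathcal{C}(G)$ achieving $\smallAbs{T} + \smallAbs{V(G)} - c_G(T) = \LF(G)$ (such a $T$ exists by \cref{BEIPD} together with König type). Letting $G_1, \ldots, G_c$ denote the components of $G \setminus T$ and partitioning $E(F)$ according to which of $T$ and the $V(G_l)$'s its endpoints lie in, the facts that every vertex of $T$ has degree at most $2$ in $F$ (since $F$ is a linear forest) and that $F$ restricted to each $V(G_l)$ is a linear forest with at most $\smallAbs{V(G_l)} - 1$ edges combine to give
\begin{equation*}
	\smallAbs{E(F)} \leq \smallAbs{T} + \smallAbs{V(G)} - c
\end{equation*}
(with $c \defeq c_G(T)$); equality forces (i) $F$ has no edges within $T$, (ii) every $v \in T$ has degree exactly $2$ in $F$, and (iii) $F$ restricted to each $V(G_l)$ is a Hamiltonian path of $G_l$. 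Since $\smallAbs{E(F)} = \LF(G) = \smallAbs{T} + \smallAbs{V(G)} - c$ by hypothesis, equality holds, so (i)--(iii) all follow: (i) and (ii) place $T$ in $\mathcal{C}(F)$ with no vertex of $T$ being a leaf of $F$, while (iii) guarantees that every edge of $G$ with both endpoints outside $T$ lies inside a single connected component of $F \setminus T$. By \cref{LFCovAssEquiv}, $(F, T)$ is an $\LF$-cover of $G$.

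The main obstacle will be identifying the right counting argument for the converse direction. Once the partition of $E(F)$ and the elementary degree and forest bounds are in place, the three structural properties emerge automatically from the equality case and the $\LF$-cover witness $S = T$ is obtained without further construction, simultaneously establishing the converse of the equivalence and the independence of the choice of maximal linear forest.
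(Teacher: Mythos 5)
Your proof is correct, and it takes a genuinely different route from the paper. The paper proves the ``$\LF$-coverable $\Rightarrow$ K\"{o}nig'' direction (\cref{LFCovImpliesKonig}) and the independence claim by passing to prime characteristic: it builds a root of $H_{\mathcal{J}(G)}^d(R)$ via \cref{RootLemma} (which needs \cref{LinearForestRS} and \cref{NSIsom}), identifies $\Ass_R(H_{\mathcal{J}(G)}^d(R))$ with the $\LF$-covers in \cref{LocCohoAssPrimeThm}, and then reduces to arbitrary characteristic via the characteristic-independence of grade; the converse direction is a dimension-chain argument (\cref{KonigImpliesLFCov}). You avoid all of this: your sub-lemma that $\hgt_R P_S(F)=\smallAbs{E(F)}$ for every $S\in\mathcal{C}(F)$ (a direct consequence of \cref{HeightLemma} and the fact that $\mathcal{C}(F)$ consists of pairwise non-adjacent internal vertices) gives $\grade_R\mathcal{J}(G)\geq\LF(G)$ without the regular-sequence proposition, and the easy direction becomes the one-line observation that $\mathcal{J}(G)\subseteq P_S(F)$ forces $\hgt_R\mathcal{J}(G)\leq\LF(G)$, with \cref{DimGradeRemark} converting height to grade — entirely characteristic-free. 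Your edge-counting argument for the converse ($\smallAbs{E(F)}\leq\smallAbs{T}+\smallAbs{V(G)}-c_G(T)$ for any $T$ and any linear forest $F$ in $G$, with the equality case forcing $\deg_F(v)=2$ on $T$, no $F$-edges inside $T$, and Hamiltonian paths on the components of $G\setminus T$) is sound, and it simultaneously delivers the independence statement, since the witness $T$ is chosen from $\mathcal{C}(G)$ independently of $F$ and works for every maximal linear forest; it even yields the slightly stronger structural fact that any $T\in\mathcal{C}(G)$ realising the minimal height serves as the cover set for all maximal linear forests at once. What the paper's heavier route buys in exchange is the finer prime-characteristic information (\cref{LocCohoAssPrimeThm} and the explicit roots of Section 3), which your argument does not recover but which the theorem itself does not need.
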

\end{tcolorbox}

\hyperref[MainLFThmProof]{The proof} will be given in \cref{MainLFThmProofSubsection}.

We begin by showing part of what is proved in \cite[Lemma 3.3]{herzogGradedIdealsKonig2021}, although we do so avoiding the use of initial ideals:

\begin{proposition}\label{LinearForestRS}
	Let $G$ be a graph on $n$ vertices, and let $e_1,\ldots,e_t$ denote the elements of $\mathcal{J}(G)$ associated to the edges of $G$. Then the $e_i$ form a regular sequence if and only if $G$ is a linear forest.
\end{proposition}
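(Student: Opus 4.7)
The plan is to convert the regular-sequence condition into a height condition and then read off the height from \cref{HeightLemma}. Since the $\delta_{i,j}$ are homogeneous of positive degree and $R_\mathfrak{m}$ is Cohen--Macaulay (for $\mathfrak{m}$ the irrelevant maximal ideal), a standard result in commutative algebra (cf.\ \cite[Section 2.1]{brunsCohenMacaulayRings2005}) tells us that the generators $e_1,\ldots,e_t$ form a regular sequence if and only if $\hgt_R(\mathcal{J}(G)) = t$. It therefore suffices to show $\hgt_R(\mathcal{J}(G)) = \smallAbs{E(G)}$ if and only if $G$ is a linear forest.

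For the ``only if'' direction, taking $S = \varnothing$ in \cref{HeightLemma} yields the upper bound $\hgt_R(\mathcal{J}(G)) \le n - c_G(\varnothing)$, and the elementary graph-theoretic inequality $\smallAbs{E(G)} \ge n - c_G(\varnothing)$ holds with equality precisely when $G$ is a forest; in particular, if $G$ contains a cycle then $\hgt_R(\mathcal{J}(G)) < t$. If instead $G$ is a forest but not a linear forest, some component of $G$ contains a vertex $v$ of degree $d \ge 3$; one checks that $\{v\} \in \mathcal{C}(G)$ with $c_G(\{v\}) = c_G(\varnothing) + d - 1$, so \cref{HeightLemma} gives $\hgt_R(\mathcal{J}(G)) \le 1 + n - c_G(\{v\}) = t - (d-2) < t$.

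For the ``if'' direction, suppose $G$ is a linear forest. I would argue $\hgt_R(\mathcal{J}(G)) \ge t$ directly by analysing $\mathcal{C}(G)$. The condition defining $\mathcal{C}(G)$ requires every $v \in S$ to be $G$-adjacent to two distinct components of $G \setminus S$, which on a linear forest forces $S$ to contain no path endpoints and no two $G$-adjacent vertices. Under these constraints, removing $\smallAbs{S \cap V(P)}$ such vertices from any path component $P$ of $G$ splits $P$ into exactly $\smallAbs{S \cap V(P)} + 1$ non-empty sub-paths; summing over components yields $c_G(S) = \smallAbs{S} + c_G(\varnothing)$, and hence $\smallAbs{S} + n - c_G(S) = n - c_G(\varnothing) = t$ for every $S \in \mathcal{C}(G)$.

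The only step requiring real care is this last one, namely pinning down the structure of $\mathcal{C}(G)$ for a linear forest precisely enough for the component count to go through; everything else is a direct application of \cref{HeightLemma}.
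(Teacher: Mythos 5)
Your argument is correct, but it takes a genuinely different route from the paper. You reduce everything to the height formula: using that $R$ is Cohen--Macaulay and the $e_i$ are homogeneous of positive degree (so regularity can be checked after localising at the irrelevant maximal ideal), the $t=\smallAbs{E(G)}$ generators form a regular sequence precisely when $\hgt_R(\mathcal{J}(G))=t$, and you then compute $\min\{\smallAbs{S}+n-c(S):S\in\mathcal{C}(G)\}$ (\cref{HeightLemma}, \cref{BEIPD}, \cref{DimGradeRemark}) combinatorially: $S=\varnothing$ handles cycles via $\smallAbs{E(G)}\geq n-c_G(\varnothing)$, $S=\{v\}$ for a degree-$\geq 3$ vertex handles non-linear forests, and for a linear forest every $S\in\mathcal{C}(G)$ consists of non-adjacent internal vertices, giving $c_G(S)=\smallAbs{S}+c_G(\varnothing)$ and hence height exactly $t$; all of these counts check out. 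The paper instead argues directly with colon ideals against the primary decomposition: for necessity it notes that a homogeneous regular sequence may be permuted and exhibits explicit failures $(\mathcal{J}(P_m):\delta_{1,m})\supsetneq\mathcal{J}(P_m)$ and $(\mathcal{J}(K_{1,3}):\delta_{1,4})=\mathcal{J}(K_3)\supsetneq\mathcal{J}(K_{1,3})$ (a linear forest being exactly a graph with no cycle and no claw), and for sufficiency it shows by induction that $(\mathcal{J}(P_{m-1}):\delta_{m-1,m})=\mathcal{J}(P_{m-1})$, then concatenates disjoint paths. Your approach buys a uniform, short argument at the cost of invoking the Cohen--Macaulay ``$t$ generators and height $t$ implies regular sequence'' criterion together with the graded-to-local descent, which you should state or cite precisely (it is \cite[Theorem 2.1.2]{brunsCohenMacaulayRings2005} plus the standard graded localisation argument); the paper's approach is more hands-on, stays entirely within the combinatorial primary decomposition, and produces the explicit zero-divisors witnessing failure, which is in the spirit of the colon-ideal computations reused later in the paper (e.g.\ \cref{ColonLemma}, \cref{HamiltonianProp}).
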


\begin{proof}
	Note first that being a linear forest is equivalent to not containing any cycle $C_m$ for $3\leq m\leq n$ or $K_{1,3}$ (the star with $3$ edges, sometimes called the claw). Since $R$ is graded and the $e_i$ are homogeneous of positive degree, if they form a regular sequence then any permutation of this sequence will remain a regular sequence (see, for example, \cite[Theorem 16.3]{matsumuraCommutativeRingTheory1986}), and so to show necessity it is enough to show that $\delta_{1,2},\delta_{2,3},\ldots,\delta_{m-1,m},\delta_{1,m}$ and $\delta_{1,2},\delta_{1,3},\delta_{1,4}$ do not form regular sequences (the second condition need not be checked when $n\leq3$).

	Firstly, we have
	\begin{align*}
		(\mathcal{J}(P_m):\delta_{1,m})&=\hspace{-0.1cm}\left(\hspace{0.45cm}\bigcap_{\mathclap{S\in\mathcal{C}(P_m)}}\,P_S(P_m)\hspace{-0.05cm}\right)\hspace{-0.1cm}:\delta_{1,m}\hspace{-0.02cm}=\,\bigcap_{\mathclap{S\in\mathcal{C}(P_m)}}\,(P_S(P_m):\delta_{1,m})=\bigcap_{\mathclap{\substack{S\in\mathcal{C}(P_m)\\\delta_{1,m}\notin P_S(P_m)}}}\,P_S(P_m)\\
		&=\bigcap_{\mathclap{\substack{S\in\mathcal{C}(P_m)\\S\neq\varnothing}}}\,P_S(P_m)\supsetneq\mathcal{J}(P_m)
	\end{align*}
	since the $P_S(P_m)$ are prime, and so  $\delta_{1,2},\delta_{2,3},\ldots,\delta_{m-1,m},\delta_{1,m}$ cannot be a regular sequence.

	We similarly have that
	\begin{equation*}
		(\mathcal{J}(K_{1,3}):\delta_{1,4})=((x_1,y_1)\cap\mathcal{J}(K_3)):\delta_{1,4}=((x_1,y_1):\delta_{1,4})\cap(\mathcal{J}(K_3):\delta_{1,4})=\mathcal{J}(K_3)\supsetneq\mathcal{J}(K_{1,3})
	\end{equation*}
	and so $\delta_{1,2},\delta_{1,3},\delta_{1,4}$ cannot be a regular sequence either. This proves necessity.

	For sufficiency, we will first prove by induction that $\delta_{1,2},\delta_{2,3},\ldots,\delta_{m-1,m}$ is a regular sequence for $m\geq 2$.

	The case $m=2$ is trivial since $R$ is a domain.

	We now proceed by induction, and so assume that $\delta_{1,2},\delta_{2,3},\ldots,\delta_{m-2,m-1}$ is a regular sequence.

	As before, we have that
	\begin{equation*}
		(\mathcal{J}(P_{m-1}):\delta_{m-1,m})=\bigcap_{\mathclap{\substack{S\in\mathcal{C}(P_{m-1})\\\delta_{m-1,m}\notin P_S(P_{m-1})}}}\,P_S(P_{m-1})=\bigcap_{\mathclap{S\in\mathcal{C}(P_{m-1})}}\,P_S(P_{m-1})=\mathcal{J}(P_{m-1})
	\end{equation*}
	since clearly $\delta_{m-1,m}\notin P_S(P_{m-1})$ for any $S\in\mathcal{C}(P_{m-1})$, and so $\delta_{1,2},\delta_{2,3},\ldots,\delta_{m-1,m}$ forms a regular sequence as desired.

Joining regular sequences from multiple disjoint paths will still result in a regular sequence, since they have no variables in common. Then we have shown sufficiency, and so we are done.
\end{proof}

\pagebreak

\subsubsection{\textsf{LF}-Covers \& Local Cohomology}\label{LFCovLCSubsection}

\begin{tcolorbox}
	Throughout this subsection, we assume that $k$ is of prime characteristic $p>0$.
\end{tcolorbox}

The next stage of our proof of \cref{MainLFThm} will make use of prime characteristic techniques. Our aim is to relate $\LF$-covers to the associated primes of a certain local cohomology module, which we do in \cref{LocCohoAssPrimeThm}.

We must introduce one final tool beforehand. We begin with a definition:

\begin{definition}
	Let $R$ be a (commutative) Noetherian ring, $I$ an ideal of $R$, and $M$ a finitely generated $R$-module. A sequence of elements $r_1,\ldots,r_t\in I$ is called \emph{$I$-filter regular on $M$} if, for all
	\begin{equation*}
		\mathfrak{p}\in\Supp_R(M)\setminus V(I)
	\end{equation*}
	and $i\leq t$ such that $r_1,\ldots,r_i\in\mathfrak{p}$, we have that $\frac{r_1}{1},\ldots,\frac{r_i}{1}$ is a (possibly improper) $M_\mathfrak{p}$-sequence.

	When $M=R$, we simply say that such a sequence is \emph{$I$-filter regular}.
\end{definition}

In particular, a regular sequence contained in $I$ is $I$-filter regular.

\begin{note}
	Unlike regular sequences, for any ideal $I$ and $R$-module $M$ there exist $I$-filter regular sequences on $M$ of arbitrary length (see, for example, \cite[Proposition 2.2]{asadollahiResultsAssociatedPrimes2003}).
\end{note}

\begin{theorem}[The Nagel-Schenzel Isomorphism]\thmCite[Lemma 3.4]{nagelCohomologicalAnnihilatorsCastelnuovoMumford1994}\label{NSIsom}
	Let $R$ be a (commutative) Noetherian ring, $I$ an ideal of $R$, and $M$ a finitely generated $R$-module. Furthermore, let $r_1,\ldots,r_t$ be an $I$-filter regular sequence on $M$, and set $\mathfrak{a}=(r_1,\ldots,r_t)$. Then
	\begin{equation*}
		H_I^i(M)\cong
		\begin{cases}
			H_\mathfrak{a}^i(M) & \text{\normalfont if }0\leq i<t \\
			H_I^{i-t}(H_\mathfrak{a}^t(M)) & \text{\normalfont if }i\geq t
		\end{cases}
	\end{equation*}
	In particular, setting $i=t$, we have
	\begin{equation*}
		H_I^t(M)\cong H_I^0(H_\mathfrak{a}^t(M))=\Gamma_I(H_\mathfrak{a}^t(M))
	\end{equation*}
	(where $\Gamma_I=H_I^0$ denotes the $I$-torsion functor).
\end{theorem}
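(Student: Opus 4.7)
My plan is to set up the Grothendieck composition spectral sequence and show it degenerates. Because $\mathfrak{a} \subseteq I$, a direct inspection of the definitions gives the identity of functors $\Gamma_I = \Gamma_I \circ \Gamma_\mathfrak{a}$; combined with the fact that $\Gamma_\mathfrak{a}$ preserves injectives over a Noetherian ring (via Matlis' classification), this produces the spectral sequence
\[ E_2^{p,q} = H_I^p(H_\mathfrak{a}^q(M)) \Longrightarrow H_I^{p+q}(M). \]
Since $\mathfrak{a}$ is generated by $t$ elements, the \v{C}ech complex on those generators has length $t$, so $H_\mathfrak{a}^q(M) = 0$ for $q > t$, confining the $E_2$-page to the horizontal strip $0 \leq q \leq t$.

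The heart of the argument is the auxiliary claim that $H_\mathfrak{a}^q(M)$ is $I$-torsion for each $0 \leq q < t$. To establish this I would take $\mathfrak{p} \in \Supp_R(H_\mathfrak{a}^q(M))$; since local cohomology commutes with localisation, $\mathfrak{p} \in \Supp_R(M)$ and $H_{\mathfrak{a}R_\mathfrak{p}}^q(M_\mathfrak{p}) \neq 0$. Suppose for contradiction that $\mathfrak{p} \not\supseteq I$. If some $r_j \notin \mathfrak{p}$ then $r_j/1$ is a unit, so $\mathfrak{a}R_\mathfrak{p} = R_\mathfrak{p}$ and the local cohomology vanishes. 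Otherwise every $r_j$ lies in $\mathfrak{p}$, and the filter-regular hypothesis supplies an $M_\mathfrak{p}$-sequence $r_1/1, \ldots, r_t/1$; this must be proper because $M_\mathfrak{p}$ is finitely generated and non-zero with each $r_j/1 \in \mathfrak{p}R_\mathfrak{p}$, so Nakayama rules out $\mathfrak{a}M_\mathfrak{p} = M_\mathfrak{p}$. The resulting bound $\grade_{\mathfrak{a}R_\mathfrak{p}}(M_\mathfrak{p}) \geq t$ forces $H_{\mathfrak{a}R_\mathfrak{p}}^q(M_\mathfrak{p}) = 0$ for all $q < t$, contradicting our choice of $\mathfrak{p}$.

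Granted the lemma, the $E_2$-page is supported only at the positions $(0,q)$ for $0 \leq q < t$, where $E_2^{0,q} = \Gamma_I(H_\mathfrak{a}^q(M)) = H_\mathfrak{a}^q(M)$ because the argument is $I$-torsion, and at $(p,t)$ for $p \geq 0$, where $E_2^{p,t} = H_I^p(H_\mathfrak{a}^t(M))$; everywhere else $E_2 = 0$. A quick check shows every differential $d_r$ starting from a surviving class lands in a zero position (the target has either $q < t$ and $p \geq 1$, or $q > t$), so $E_\infty = E_2$. Reading off the total-degree line $p+q=i$: for $i < t$ only $E^{0,i} = H_\mathfrak{a}^i(M)$ survives, while for $i \geq t$ only $E^{i-t,t} = H_I^{i-t}(H_\mathfrak{a}^t(M))$ survives, yielding exactly the two cases in the statement.

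The main obstacle is the $I$-torsion lemma. The spectral sequence framework and its collapse are essentially routine once that is in hand, but the lemma is where the technical strength of the filter-regular hypothesis must actually be unpacked -- in particular, identifying a proper regular sequence from the \emph{possibly improper} sequence supplied by the definition (via the Nakayama argument above), and verifying that the dichotomy on whether $\mathfrak{p}$ contains every $r_j$ truly exhausts the cases.
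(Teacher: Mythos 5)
Your argument is correct: the composite-functor spectral sequence $E_2^{p,q}=H_I^p(H_\mathfrak{a}^q(M))\Rightarrow H_I^{p+q}(M)$ is legitimate (since $\mathfrak{a}\subseteq I$ gives $\Gamma_I=\Gamma_I\circ\Gamma_\mathfrak{a}$ and $\Gamma_\mathfrak{a}$ preserves injectives over a Noetherian ring), and your key lemma --- that $H_\mathfrak{a}^q(M)$ is $I$-torsion for $q<t$, including the Nakayama step upgrading the possibly improper filter-regular sequence to a genuine $M_\mathfrak{p}$-sequence so that $\grade_{\mathfrak{a}R_\mathfrak{p}}(M_\mathfrak{p})\geq t$ --- is precisely where the filter-regular hypothesis does its work, after which the degeneration and the reading of the two cases are routine. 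Note that the paper does not prove \cref{NSIsom} itself but cites it, remarking that the published proofs (Nagel--Schenzel and the generalisation by Asadollahi--Khashyarmanesh--Salarian) proceed via spectral sequences, so your proof follows essentially the same route as those cited sources.
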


\begin{note}
	\nameref{NSIsom} is stated in \thmCite[Lemma 3.4]{nagelCohomologicalAnnihilatorsCastelnuovoMumford1994} only for local rings $(R,\mathfrak{m})$ and $\mathfrak{m}$-filter regular sequences on $R$-modules. \cite[Proposition 2.3]{asadollahiResultsAssociatedPrimes2003} naturally generalises this to general $I$-filter regular sequences on $R$-modules. In both of these papers, the proof of the isomorphism makes use of spectral sequences. For an elementary proof, and of a slightly more general result, see \cite[Theorem 2.7]{huongNotesFrobeniusTest2019}.
\end{note}

The crux of our proof of \cref{LocCohoAssPrimeThm} is as follows:

\begin{lemma}\label{RootLemma}
	Let $G$ be a graph on $n$ vertices, and set $\mathfrak{g}=\mathcal{J}(G)$. Furthermore, let $F$ be a maximal linear forest in $G$, and set $\mathfrak{f}=\mathcal{J}(F)$. Then $(\mathfrak{f}:\mathfrak{g})/\mathfrak{f}$ is a root of $H_\mathfrak{g}^d(R)$, where $d=\smallAbs{E(F)}$.
\end{lemma}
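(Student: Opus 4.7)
The plan is to use \nameref{NSIsom} to reduce $H_\mathfrak{g}^d(R)$ to the $\mathfrak{g}$-torsion of a local cohomology attached to a regular sequence, and then restrict the standard root to the $\mathfrak{g}$-torsion submodule. Because $F$ is a linear forest, \cref{LinearForestRS} tells us the binomial generators $e_1,\ldots,e_d$ of $\mathfrak{f}$ form a regular sequence on $R$, and since they lie in $\mathfrak{g}$ they are automatically $\mathfrak{g}$-filter regular. Applying \cref{NSIsom} with $t=d$ therefore gives $H_\mathfrak{g}^d(R)\cong\Gamma_\mathfrak{g}(H_\mathfrak{f}^d(R))$, and by \cref{LCFFinFMod} the latter sits inside $H_\mathfrak{f}^d(R)$, which has $R/\mathfrak{f}$ as a root via $\beta\colon R/\mathfrak{f}\hookrightarrow R/\mathfrak{f}^{[p]}$, $1\mapsto u$, where $u=(e_1\cdots e_d)^{p-1}$.

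My candidate root is $N\defeq(\mathfrak{f}:\mathfrak{g})/\mathfrak{f}\subseteq R/\mathfrak{f}$, with root morphism $\gamma\defeq\beta|_N$. To see that $\gamma$ actually lands in $F_R(N)$, observe that \cref{Kunz} forces Frobenius to be flat, so $F_R$ is exact and $F_R(N)$ is identified with $(\mathfrak{f}:\mathfrak{g})^{[p]}/\mathfrak{f}^{[p]}$; Frobenius flatness moreover yields the colon identity $(\mathfrak{f}:\mathfrak{g})^{[p]}=(\mathfrak{f}^{[p]}:\mathfrak{g}^{[p]})$. Then for any $a\in(\mathfrak{f}:\mathfrak{g})$ and $g\in\mathfrak{g}$, writing $ag=\sum_i c_i e_i\in\mathfrak{f}$ gives
\[
uag=(e_1\cdots e_d)^{p-1}\sum_i c_i e_i=\sum_i c_i e_i^p\prod_{j\neq i}e_j^{p-1}\in\mathfrak{f}^{[p]},
\]
so $ua\in(\mathfrak{f}^{[p]}:\mathfrak{g})\subseteq(\mathfrak{f}^{[p]}:\mathfrak{g}^{[p]})=(\mathfrak{f}:\mathfrak{g})^{[p]}$. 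Injectivity of $\gamma$ is inherited from $\beta$.

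The remaining task is to identify $\mathscr{N}\defeq\dirlim F_R^e(N)$, viewed inside $H_\mathfrak{f}^d(R)$ via the exactness of $F_R$ applied to the inclusion $N\hookrightarrow R/\mathfrak{f}$, with $\Gamma_\mathfrak{g}(H_\mathfrak{f}^d(R))$. One direction is quick: iterating the colon identity gives $F_R^e(N)=(\mathfrak{f}:\mathfrak{g})^{[p^e]}/\mathfrak{f}^{[p^e]}=(\mathfrak{f}^{[p^e]}:\mathfrak{g}^{[p^e]})/\mathfrak{f}^{[p^e]}$, and a pigeonhole argument on a finite generating set of $\mathfrak{g}$ shows $\mathfrak{g}^K\subseteq\mathfrak{g}^{[p^e]}$ for sufficiently large $K$, so $F_R^e(N)$ is $\mathfrak{g}$-torsion and $\mathscr{N}\subseteq\Gamma_\mathfrak{g}(H_\mathfrak{f}^d(R))$. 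For the reverse, since $\Gamma_\mathfrak{g}$ commutes with direct limits any element of $\Gamma_\mathfrak{g}(H_\mathfrak{f}^d(R))$ is represented by some $b\in R$ with $b\mathfrak{g}^k\subseteq\mathfrak{f}^{[p^e]}$; choosing $e'$ with $p^{e'}\geq k$, the transition map in \cref{LCFFinFMod} sends $b$ to $vb$, where $v=(e_1\cdots e_d)^{p^{e'}-p^e}$, and expanding the product $vb\cdot g^{p^{e'}}$ for any $g\in\mathfrak{g}$ places it in $\mathfrak{f}^{[p^{e'}]}$, so $vb\in(\mathfrak{f}^{[p^{e'}]}:\mathfrak{g}^{[p^{e'}]})=(\mathfrak{f}:\mathfrak{g})^{[p^{e'}]}$ and $b$ is in the image of $F_R^{e'}(N)$. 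I expect this surjectivity step to be the main obstacle, since at each fixed level $(\mathfrak{f}^{[p^e]}:\mathfrak{g}^\infty)$ is a priori much larger than $(\mathfrak{f}:\mathfrak{g})^{[p^e]}$; the point is that the surplus $\mathfrak{g}^\infty$-torsion is absorbed once one pushes far enough along the Frobenius system.
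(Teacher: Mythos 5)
Your proposal is correct, and its skeleton is the paper's: the regular sequence supplied by \cref{LinearForestRS}, the Nagel--Schenzel reduction $H^d_\mathfrak{g}(R)\cong\Gamma_\mathfrak{g}(H^d_\mathfrak{f}(R))$, and the Frobenius presentation of $H^d_\mathfrak{f}(R)$ from \cref{LCFFinFMod}. The difference lies in how the root is extracted. The paper pushes $\Gamma_\mathfrak{g}$ through the limit termwise, so its stages are $(\mathfrak{f}^{[p^e]}:\mathfrak{g}^\infty)/\mathfrak{f}^{[p^e]}$, gets injectivity of the maps from the regular sequence (first map), \nameref{Kunz} and left exactness of $\Gamma_\mathfrak{g}$, and then removes the saturation using that $\mathfrak{f}=\mathcal{J}(F)$ is radical (\cref{BEIRadical}), i.e.\ $(\mathfrak{f}:\mathfrak{g}^\infty)=(\mathfrak{f}:\mathfrak{g})$. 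You instead take the $F$-submodule generated by $N=(\mathfrak{f}:\mathfrak{g})/\mathfrak{f}$ with the restricted map and verify by hand, via the flatness identity $(\mathfrak{f}:\mathfrak{g})^{[q]}=(\mathfrak{f}^{[q]}:\mathfrak{g}^{[q]})$ and the push-forward by $v=(e_1\cdots e_d)^{p^{e'}-p^e}$, that this submodule is exactly $\Gamma_\mathfrak{g}(H^d_\mathfrak{f}(R))$; your surjectivity step, including the re-choice of a representative $b$ with $b\mathfrak{g}^k\subseteq\mathfrak{f}^{[p^e]}$ at a fixed stage, is sound. What your route buys is that radicality of $\mathcal{J}(F)$ and the $\mathfrak{g}^\infty$-colon never enter: the argument works verbatim for any ideal generated by a regular sequence inside $\mathfrak{g}$, and it makes explicit the commutation of $\Gamma_\mathfrak{g}$ with $F_R$ that the paper's one-line manipulation of the limit leaves implicit; the cost is length, since the paper's version leans on radicality and standard exactness facts to finish in a few lines. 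One small point to tighten: the injectivity of $\beta\colon R/\mathfrak{f}\to R/\mathfrak{f}^{[p]}$, $1\mapsto u$, is not part of \cref{LCFFinFMod} but a consequence of the $e_i$ forming a regular sequence (the paper cites \cite[Theorem 3.2]{ocarrollGeneralizedFractionsSharp1983}); since you have the regular sequence from \cref{LinearForestRS}, your ``injectivity of $\gamma$ is inherited from $\beta$'' step is fine once that justification is stated.
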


\begin{proof}\label{RootLemmaProof}
	Let $e_1,\ldots,e_d$ be the elements of $\mathfrak{f}$ corresponding to the edges of $F$, which generate $\mathfrak{f}$. By \cref{LinearForestRS}, the $e_i$ form a regular sequence. In particular, they form a $\mathfrak{g}$-filter regular sequence since $\mathfrak{f}\subseteq\mathfrak{g}$. Then, by \hyperref[NSIsom]{the Nagel-Schenzel Isomorphism} and \cref{LCFFinFMod}, we have
	\begin{align*}
		H_\mathfrak{g}^d(R)\cong\Gamma_\mathfrak{g}(H_\mathfrak{f}^d(R))&\cong\Gamma_\mathfrak{g}\hspace{-0.1cm}\left(\dirlim\hspace{-0.05cm}\left[R/\mathfrak{f}\xhra{u\cdot}R/\mathfrak{f}^{\hspace{0.035cm}[p]}\xhra{u^p\cdot}R/\mathfrak{f}^{\hspace{0.035cm}[p^2]}\xhra{u^{p^2}\cdot}\cdots\right]\right)\\
		&\cong\dirlim\hspace{-0.05cm}\left[\Gamma_\mathfrak{g}(R/\mathfrak{f})\xhra{u\cdot}\Gamma_\mathfrak{g}(R/\mathfrak{f}^{\hspace{0.035cm}[p]})\xhra{u^p\cdot}\Gamma_\mathfrak{g}(R/\mathfrak{f}^{\hspace{0.035cm}[p^2]})\xhra{u^{p^2}\cdot}\cdots\right]\\
		&\cong\dirlim\hspace{-0.05cm}\left[(\mathfrak{f}:\mathfrak{g}^\infty)/\mathfrak{f}\xhra{u\cdot}(\mathfrak{f}^{\hspace{0.035cm}[p]}:\mathfrak{g}^\infty)/\mathfrak{f}^{\hspace{0.035cm}[p]}\xhra{u^p\cdot}(\mathfrak{f}^{\hspace{0.035cm}[p^2]}:\mathfrak{g}^\infty)/\mathfrak{f}^{\hspace{0.035cm}[p^2]}\xhra{u^{p^2}\cdot}\cdots\right]
	\end{align*}
	where $u=(e_1\cdots e_d)^{p-1}$.

\pagebreak

	The injectivity of the first map follows since the $e_i$ form a regular sequence (see, for example, \cite[Theorem 3.2]{ocarrollGeneralizedFractionsSharp1983}), and the injectivity of the subsequent maps is a result of the exactness of the Frobenius functor (which is due to \nameref{Kunz} since $R$ is regular) and the left exactness of $\Gamma_\mathfrak{g}$.

	Since $\mathfrak{f}=\mathcal{J}(F)$ is radical by \cref{BEIRadical}, we have $(\mathfrak{f}:\mathfrak{g}^\infty)=(\mathfrak{f}:\mathfrak{g})$, and so we are done.
\end{proof}

The following lemma is not difficult (and holds over any ring), but we include a proof for completeness:

\begin{lemma}\label{AssLemma}
	Let $\mathfrak{a}$ be a radical ideal of $R$ with minimal primary decomposition
	\begin{equation*}
		\mathfrak{a}=\bigcap_{\mathclap{i=1}}^{t}\,\mathfrak{p}_i
	\end{equation*}
	for some $\mathfrak{p}_i\in\Spec(R)$ and $t\geq1$. Furthermore, let
	\begin{equation*}
		\mathfrak{b}=\bigcap_{\mathclap{i=1}}^{m}\,\mathfrak{p}_i
	\end{equation*}
	for some $1\leq m<t$. Then
	\begin{equation*}
		\Ass_R(\mathfrak{b}/\mathfrak{a})=\{\mathfrak{p}_i:m+1\leq i\leq t\}
	\end{equation*}
\end{lemma}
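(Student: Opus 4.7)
The plan is to compute $\Ann_R(y+\mathfrak{a})$ for an arbitrary $y\in\mathfrak{b}$ directly using the primary decomposition. For $y\in\mathfrak{b}$ we have
\begin{equation*}
    \Ann_R(y+\mathfrak{a})=(\mathfrak{a}:y)=\bigcap_{j=1}^{t}(\mathfrak{p}_j:y)=\bigcap_{\substack{1\leq j\leq t\\y\notin\mathfrak{p}_j}}\mathfrak{p}_j
\end{equation*}
since each $\mathfrak{p}_j$ is prime, so that $(\mathfrak{p}_j:y)=\mathfrak{p}_j$ when $y\notin\mathfrak{p}_j$ and $(\mathfrak{p}_j:y)=R$ otherwise. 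This reduces everything to understanding when such an intersection is prime.

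Since $\mathfrak{a}$ is radical, the $\mathfrak{p}_j$ are precisely the minimal primes of $\mathfrak{a}$, and so are pairwise incomparable. I would then invoke the standard fact that an intersection of two or more pairwise incomparable primes is never prime (if $\mathfrak{q}_1\cap\cdots\cap\mathfrak{q}_s$ is prime with $s\geq 2$, then it contains the product $\mathfrak{q}_1\cdots\mathfrak{q}_s$ and hence some $\mathfrak{q}_i$, and combined with the reverse containment would force $\mathfrak{q}_i\subseteq\mathfrak{q}_k$ for all $k$). Consequently $\Ann_R(y+\mathfrak{a})$ is prime if and only if exactly one index $j$ satisfies $y\notin\mathfrak{p}_j$, in which case $\Ann_R(y+\mathfrak{a})=\mathfrak{p}_j$. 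This gives the clean reformulation
\begin{equation*}
    \Ass_R(\mathfrak{b}/\mathfrak{a})=\left\{\mathfrak{p}_i:\exists\, y\in\mathfrak{b}\text{ with }y\notin\mathfrak{p}_i\text{ and }y\in\mathfrak{p}_j\text{ for all }j\neq i\right\}.
\end{equation*}

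The final step is to check the two cases. For $1\leq i\leq m$, any $y\in\mathfrak{b}$ automatically lies in $\mathfrak{p}_i$, so no witness exists and $\mathfrak{p}_i\notin\Ass_R(\mathfrak{b}/\mathfrak{a})$. For $m+1\leq i\leq t$, the irredundancy of the minimal primary decomposition of $\mathfrak{a}$ gives $\bigcap_{j\neq i}\mathfrak{p}_j\not\subseteq\mathfrak{p}_i$, so we may choose $y\in\bigcap_{j\neq i}\mathfrak{p}_j$ with $y\notin\mathfrak{p}_i$; since $\{1,\ldots,m\}\subseteq\{1,\ldots,t\}\setminus\{i\}$, this $y$ lies in $\mathfrak{b}$, witnessing $\mathfrak{p}_i\in\Ass_R(\mathfrak{b}/\mathfrak{a})$.

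The argument is essentially routine; the only real content is the observation that incomparability of minimal primes forces the annihilator to be prime only in the "singleton" cases, which collapses the computation onto the indices $j\geq m+1$.
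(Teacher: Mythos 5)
Your proposal is correct and follows essentially the same route as the paper: compute $\Ann_R(y+\mathfrak{a})$ as the intersection of those $\mathfrak{p}_j$ not containing $y$, use incomparability of the minimal primes to see this is prime only when a single index survives, and use irredundancy of the decomposition to produce witnesses for each $\mathfrak{p}_i$ with $m+1\leq i\leq t$. The only cosmetic difference is that you compute $(\mathfrak{a}:y)$ directly as an intersection of colon ideals, where the paper argues the two containments by hand.
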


\begin{proof}
	Let $b\in\mathfrak{b}$, and suppose that $rb\in\mathfrak{a}$ for some $r\in R$. Then $rb\in\mathfrak{p}_i$ for all $1\leq i\leq t$. Since the $\mathfrak{p}_i$ are prime, we must have that $r\in\mathfrak{p}_i$ for each $m+1\leq i\leq t$ such that $b\notin\mathfrak{p}_i$. Then
	\begin{equation*}
		\Ann_R(b+\mathfrak{a})\subseteq\,\bigcap_{\mathclap{\substack{i=m+1\\b\notin\mathfrak{p}_i}}}^t\,\mathfrak{p}_i
	\end{equation*}
	The converse clearly holds, and this ideal is only prime when $b\notin\mathfrak{p}_i$ for exactly one $m+1\leq i\leq t$, so
	\begin{equation*}
		\Ass_R(\mathfrak{b}/\mathfrak{a})\subseteq\{\mathfrak{p}_i:m+1\leq i\leq t\}
	\end{equation*}
	Since the primary decomposition of $\mathfrak{a}$ is minimal, no $\mathfrak{p}_i$ is redundant. Then we can find
	\begin{equation*}
		b_j\in\bigcap_{\mathclap{\substack{i=1\\i\neq j}}}^t\,\mathfrak{p}_i\setminus\mathfrak{p}_j
	\end{equation*}
	for each $m+1\leq j\leq t$, and the result follows.
\end{proof}

We can now prove the main result of this subsection:

\begin{theorem}\label{LocCohoAssPrimeThm}
	Let $G$ be a graph on $n$ vertices, and set $\mathfrak{g}=\mathcal{J}(G)$. Furthermore, let $F$ be a maximal linear forest in $G$ with $\smallAbs{E(F)}=d$. Then
	\begin{equation*}
		\Ass_R(H_\mathfrak{g}^d(R))=\{P_S(F):\text{$S\in\mathcal{C}(G)$ such that $(F,S)$ is an $\LF$-cover of $G$}\}
	\end{equation*}
\end{theorem}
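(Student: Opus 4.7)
The plan is to apply \cref{RootLemma}, which identifies $M \defeq (\mathfrak{f}:\mathfrak{g})/\mathfrak{f}$ as a root of $H_\mathfrak{g}^d(R)$, and then compute $\Ass_R M$ directly from the primary decomposition of $\mathfrak{f}$.

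First I would establish $\Ass_R(H_\mathfrak{g}^d(R)) = \Ass_R M$. Since \cref{RootAssEqual} is stated only for local rings, I would reduce to it by localization: for any $\mathfrak{p} \in \Spec R$, the Frobenius functor, direct limits, and colons with finitely generated ideals all commute with localization at $\mathfrak{p}$, so $H_\mathfrak{g}^d(R)_\mathfrak{p} \cong H_{\mathfrak{g}R_\mathfrak{p}}^d(R_\mathfrak{p})$ admits $M_\mathfrak{p}$ as a root. Then \cref{RootAssEqual} gives $\Ass_{R_\mathfrak{p}}(H_\mathfrak{g}^d(R)_\mathfrak{p}) = \Ass_{R_\mathfrak{p}}(M_\mathfrak{p})$, and the standard equivalence $\mathfrak{p} \in \Ass_R N \Leftrightarrow \mathfrak{p}R_\mathfrak{p} \in \Ass_{R_\mathfrak{p}} N_\mathfrak{p}$ delivers the global equality.

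Next, since $\mathfrak{f} = \bigcap_{S \in \mathcal{C}(F)} P_S(F)$ is a minimal primary decomposition by \cref{BEIPD}, and each $P_S(F)$ is prime, the colon distributes as
\begin{equation*}
(\mathfrak{f} : \mathfrak{g}) \;=\; \bigcap_{S \in \mathcal{C}(F)} (P_S(F) : \mathfrak{g}) \;=\; \bigcap_{\substack{S \in \mathcal{C}(F) \\ \mathfrak{g} \not\subseteq P_S(F)}} P_S(F).
\end{equation*}
By \cref{LFCovAssEquiv}, the $S$'s excluded from this final intersection are precisely those for which $(F,S)$ is an $\LF$-cover of $G$. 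Applying \cref{AssLemma} with $\mathfrak{a} = \mathfrak{f}$ and $\mathfrak{b} = (\mathfrak{f}:\mathfrak{g})$ then yields
\begin{equation*}
\Ass_R M \;=\; \{P_S(F) : S \in \mathcal{C}(F) \text{ and } (F,S) \text{ is an } \LF\text{-cover of } G\}.
\end{equation*}

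Finally, to match the theorem's statement, I would verify that every such $S$ automatically lies in $\mathcal{C}(G)$. The $\LF$-cover condition forces each edge of $G \setminus S$ to lie within a single component of $F \setminus S$, so the connected components of $G \setminus S$ and $F \setminus S$ coincide, giving $c_G(S) = c_F(S)$. Since $F \subseteq G$ yields $c_G(T) \leq c_F(T)$ for every $T$, combining with $S \in \mathcal{C}(F)$ gives $c_G(S \setminus \{v\}) \leq c_F(S \setminus \{v\}) < c_F(S) = c_G(S)$ for every $v \in S$, hence $S \in \mathcal{C}(G)$. The main obstacle is really just the localization bookkeeping in the first step, since \cref{RootAssEqual}'s local hypothesis forces us to verify compatibility at each prime; beyond that, no substantive new argument is required beyond assembling the machinery already developed.
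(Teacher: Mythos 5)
Your proof is correct and follows essentially the same route as the paper: \cref{RootLemma} together with \cref{RootAssEqual} reduces the problem to the root $(\mathfrak{f}:\mathfrak{g})/\mathfrak{f}$, and then the prime decomposition of $\mathfrak{f}$, \cref{AssLemma} and \cref{LFCovAssEquiv} identify the associated primes exactly as in the paper's proof. The two points you spell out --- the localisation bookkeeping needed to apply \cref{RootAssEqual} beyond the local case, and the check that an $\LF$-cover with $S\in\mathcal{C}(F)$ automatically satisfies $S\in\mathcal{C}(G)$ --- are left implicit in the paper, so they are welcome refinements of the same argument rather than a different approach.
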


\begin{proof}
	Let $\mathfrak{f}=\mathcal{J}(F)$. We have that
	\begin{equation*}
		(\mathfrak{f}:\mathfrak{g})=\hspace{-0.05cm}\left(\hspace{0.35cm}\bigcap_{\mathclap{S\in\mathcal{C}(F)}}\,P_S(F)\hspace{-0.05cm}\right)\hspace{-0.075cm}:\mathfrak{g}=\hspace{0.1cm}\bigcap_{\mathclap{S\in\mathcal{C}(F)}}\,(P_S(F):\mathfrak{g})=\bigcap_{\mathclap{\substack{S\in\mathcal{C}(F)\\\mathfrak{g}\nsubseteq P_S(F)}}}\,P_S(F)
	\end{equation*}
	since each $P_S(F)$ is prime. Since $(\mathfrak{f}:\mathfrak{g})/\mathfrak{f}$ is a root of $H_\mathfrak{g}^d(R)$ by \cref{RootLemma}, their associated primes are equal by \cref{RootAssEqual}. By \cref{AssLemma} and \cref{LFCovAssEquiv}, these primes are exactly as claimed.
\end{proof}

\begin{note}
	At the time of writing, we do not know if \cref{LocCohoAssPrimeThm} holds in characteristic $0$.
\end{note}

\pagebreak

\subsubsection{Proof of the Main Theorem}\label{MainLFThmProofSubsection}

\begin{tcolorbox}
	We now allow $k$ to be of any characteristic again.
\end{tcolorbox}

Whilst we have only proven \cref{LocCohoAssPrimeThm} in prime characteristic, this will allow us to prove \cref{MainLFThm} in arbitrary characteristic:

\begin{proposition}\label{LFCovImpliesKonig}
	Let $G$ be a graph on $n$ vertices, set $\mathfrak{g}=\mathcal{J}(G)$, and let $F$ be a maximal linear forest in $G$. Then if $G$ has an $\LF$-cover $(F,S)$ for some $S\subseteq V(G)$, $\mathfrak{g}$ is of K\"{o}nig type.
\end{proposition}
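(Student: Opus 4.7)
The plan is to derive matching upper and lower bounds on $\grade_R(\mathfrak{g})$ by combining \cref{LinearForestRS} with \cref{LocCohoAssPrimeThm}, working first in prime characteristic and then transferring to arbitrary characteristic via \cref{DimGradeRemark}. The sentence preceding the statement signals exactly this strategy: \cref{LocCohoAssPrimeThm}, although only available in prime characteristic, will still suffice.

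First I would reduce to the case $\operatorname{char}(k) = p > 0$. The invariant $\LF(G)$ is manifestly combinatorial, so it does not depend on $k$. The property of $(F,S)$ being an $\LF$-cover of $G$ is likewise combinatorial. Finally, $\grade_R(\mathfrak{g})$ is independent of the characteristic of $k$ by \cref{DimGradeRemark}. Hence it suffices to establish the equality $\grade_R(\mathfrak{g}) = \LF(G)$ in prime characteristic, from which the proposition in arbitrary characteristic follows by base-change-invariance of both sides.

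Now assume $\operatorname{char}(k) = p > 0$ and set $d = |E(F)| = \LF(G)$. For the lower bound, \cref{LinearForestRS} says that the $d$ edge binomials $\delta_{i,j}$ of $F$ form a regular sequence lying in $\mathfrak{g}$, so $\grade_R(\mathfrak{g}) \geq d$. For the upper bound, since $(F,S)$ is an $\LF$-cover, \cref{LocCohoAssPrimeThm} gives $P_S(F) \in \Ass_R(H_\mathfrak{g}^d(R))$; in particular $H_\mathfrak{g}^d(R) \neq 0$. Because $\grade_R(\mathfrak{g})$ equals the least $i$ for which $H_\mathfrak{g}^i(R) \neq 0$, we deduce $\grade_R(\mathfrak{g}) \leq d$. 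Combining the two inequalities yields $\grade_R(\mathfrak{g}) = d = \LF(G)$, so $\mathfrak{g}$ is of K\"{o}nig type.

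No step is technically demanding, since every ingredient is imported from earlier in the paper. The only point that requires genuine care is the reduction to prime characteristic at the start: one must confirm that each object entering the statement ($\LF(G)$, the existence of the $\LF$-cover $(F,S)$, and $\grade_R(\mathfrak{g})$) is insensitive to the base field, so that no circularity is introduced when invoking the characteristic-$p$ tool \cref{LocCohoAssPrimeThm}.
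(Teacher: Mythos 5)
Your proposal is correct and follows essentially the same route as the paper: the lower bound $\grade_R(\mathfrak{g})\geq d$ via the regular sequence from \cref{LinearForestRS}, the upper bound via non-vanishing of $H_\mathfrak{g}^d(R)$ from \cref{LocCohoAssPrimeThm}, and characteristic-independence of grade from \cref{DimGradeRemark}. The only cosmetic difference is that the paper transfers just the inequality $\grade_R(\mathfrak{g})\leq d$ from prime characteristic (noting the regular-sequence bound holds in any characteristic directly), whereas you transfer the full equality; both are justified by the same remark.
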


\begin{proof}
	Let $d=|E(F)|$. Suppose first that $k$ has prime characteristic $p>0$. Since $(F,S)$ is an $\LF$-cover of $G$, then by \cref{LocCohoAssPrimeThm} we know that $\Ass_R(H_\mathfrak{g}^d(R))$ is non-empty. In particular, we have that $H_\mathfrak{g}^d(R)\neq0$, and so $\grade_R(\mathfrak{g})\leq d$ (see, for example, \cite[Theorem 6.2.7]{brodmannLocalCohomologyAlgebraic2013}). But, by \cref{DimGradeRemark}, $\grade_R(\mathfrak{g})$ is independent of the characteristic of $k$, and so this inequality holds in any case.

	Conversely, the elements of $\mathfrak{g}$ corresponding to the edges of $F$ form a regular sequence by \cref{LinearForestRS}, so $\grade_R(\mathfrak{g})\geq d$ and we are done.
\end{proof}

\begin{proposition}\label{KonigImpliesLFCov}
	Let $G$ be a graph on $n$ vertices, set $\mathfrak{g}=\mathcal{J}(G)$, and suppose that $\mathfrak{g}$ is of K\"{o}nig type. Then $G$ is $\LF$-coverable.
\end{proposition}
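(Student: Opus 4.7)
The plan is to reduce to the prime characteristic case and then extract an $\LF$-cover directly from the associated primes of $H_\mathfrak{g}^d(R)$ using \cref{LocCohoAssPrimeThm}. The enabling observation is that both the hypothesis (K\"{o}nig type) and the desired conclusion ($\LF$-coverability) are characteristic-independent.

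First I would verify the reduction. $\LF$-coverability depends only on $G$, not on $k$, and being of K\"{o}nig type depends only on $\grade_R(\mathfrak{g})$ and $\LF(G)$, both of which are characteristic-independent (the former by \cref{DimGradeRemark}, the latter by its purely combinatorial definition). So we may replace $k$ by a field of prime characteristic $p > 0$ and work in the setting of \cref{LFCovLCSubsection}.

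Next, I would fix any maximal linear forest $F$ in $G$ and set $d = |E(F)| = \LF(G)$. The K\"{o}nig type hypothesis gives $\grade_R(\mathfrak{g}) = d$. By the standard characterisation of grade as the smallest $i$ with $H_\mathfrak{g}^i(R) \neq 0$ (see, e.g., \cite[Theorem 6.2.7]{brodmannLocalCohomologyAlgebraic2013}), this forces $H_\mathfrak{g}^d(R) \neq 0$. Since $R$ is Noetherian, any nonzero $R$-module admits at least one associated prime, so $\Ass_R(H_\mathfrak{g}^d(R)) \neq \varnothing$. By \cref{LocCohoAssPrimeThm}, this set consists precisely of those $P_S(F)$ for which $(F, S)$ is an $\LF$-cover of $G$, and so at least one such $S$ must exist, yielding an $\LF$-cover of $G$.

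The substantive work has already been carried out in \cref{LocCohoAssPrimeThm} and in the discussion of characteristic-independence, so the main ``obstacle'' here is conceptual rather than technical: one must recognise that K\"{o}nig type is precisely the condition forcing the top local cohomology module (associated to the regular sequence coming from the edges of $F$) to be nonzero, and hence to contribute an associated prime which \cref{LocCohoAssPrimeThm} then identifies as an $\LF$-cover. As a pleasant byproduct, this argument in fact produces an $\LF$-cover $(F, S)$ for \emph{every} maximal linear forest $F$, which together with \cref{LFCovImpliesKonig} establishes the independence claim of \cref{MainLFThm}.
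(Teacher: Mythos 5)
Your proof is correct, and there is no circularity in invoking \cref{LocCohoAssPrimeThm} (it rests only on \cref{RootLemma}, \cref{RootAssEqual}, \cref{AssLemma} and \cref{LFCovAssEquiv}), but it follows a genuinely different route from the paper. The paper's own argument is characteristic-free and avoids local cohomology entirely: fixing a maximal linear forest $F$ with $\mathfrak{f}=\mathcal{J}(F)$, the K\"{o}nig hypothesis gives $\grade_R(\mathfrak{g})=\LF(G)=\grade_R(\mathfrak{f})$, hence $\dim(R/\mathfrak{g})=\dim(R/\mathfrak{f})$ by \cref{DimGradeRemark}; taking a saturated chain of primes of length $\dim(R/\mathfrak{g})$ above $\mathfrak{g}\supseteq\mathfrak{f}$, its minimal element must be a minimal prime of $\mathfrak{f}$, i.e.\ some $P_S(F)$ from \cref{BEIPD}, and since $\mathfrak{g}\subseteq P_S(F)$ the pair $(F,S)$ is an $\LF$-cover by \cref{LFCovAssEquiv}. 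Your version instead reduces to prime characteristic (a legitimate reduction, since grade, $\LF(G)$ and $\LF$-coverability are all characteristic-independent) and reads the cover off from $\Ass_R(H_\mathfrak{g}^d(R))\neq\varnothing$, which requires the full strength of \cref{LocCohoAssPrimeThm}; the paper deliberately keeps this direction elementary and reserves the prime characteristic machinery for \cref{LFCovImpliesKonig} and the independence claim. What your route buys is that the ``Furthermore'' part of \cref{MainLFThm} (every maximal linear forest admits some cover) drops out immediately; note, though, that the paper's proof of that claim establishes slightly more, namely that the \emph{same} set $S$ works for every maximal linear forest, which your argument does not recover. Also note that the paper's chain-of-primes proof, like yours, works for an arbitrary choice of $F$, so the trade-off is really elementary-and-characteristic-free versus reuse of the heavy theorem.
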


\begin{proof}
	Let $F$ be a maximal linear forest in $G$, and set $\mathfrak{f}=\mathcal{J}(F)$. Since $\mathfrak{g}$ is of K\"{o}nig type, and $\mathfrak{f}$ is trivially of K\"{o}nig type, we have
	\begin{equation*}
		\grade_R(\mathfrak{g})=\LF(G)=\LF(F)=\grade_R(\mathfrak{f})
	\end{equation*}
	and so
	\begin{equation*}
		\dim(R/\mathfrak{g})=\dim(R)-\grade_R(\mathfrak{g})=\dim(R)-\grade_R(\mathfrak{f})=\dim(R/\mathfrak{f})
	\end{equation*}
	by \cref{DimGradeRemark}.

	Let $d=\dim(R/\mathfrak{g})$. Then we can find $\mathfrak{p}_0,\ldots,\mathfrak{p}_d\in\Spec(R)$ such that
	\begin{equation*}
		\mathfrak{f}\subseteq\mathfrak{g}\subseteq\mathfrak{p}_0\subsetneq\cdots\subsetneq\mathfrak{p}_d
	\end{equation*}
	Now, $\mathfrak{p}_0$ must be minimal over $\mathfrak{f}$, since otherwise we would have $\dim(R/\mathfrak{f})>d$, and so $\mathfrak{p}_0\in\Ass_R(\mathfrak{f})$. Then $\mathfrak{p}_0=P_S(F)$ for some $S\subseteq V(F)$, and we are done by \cref{LFCovAssEquiv}.
\end{proof}

\begin{customProof}[Proof of \cref{MainLFThm}]\label{MainLFThmProof}
	The first claim follows immediately from \cref{LFCovImpliesKonig} and \cref{KonigImpliesLFCov}.

	We will now prove the second claim, so suppose that $(F,S)$ is an $\LF$-cover of $G$, and take another maximal linear forest $F'$ in $G$. When $k$ has prime characteristic $p>0$, we have
	\begin{equation*}
		P_S(F)\in\Ass_R(H_\mathfrak{g}^d(R))
	\end{equation*}
	by \cref{LocCohoAssPrimeThm}, where $d=\LF(G)$ and $\mathfrak{g}=\mathcal{J}(G)$. This local cohomology module does not depend on the choice of maximal linear forest, and so, again by \cref{LocCohoAssPrimeThm}, we must have $P_S(F)=P_{S'}(F')$ for some $S'\subseteq V(F')$ such that $(F',S')$ is an $\LF$-cover of $G$. By \hyperref[PSGDef]{definition}, $x_i,y_i\in P_S(F)$ if and only if $i\in S$, and similarly $x_i,y_i\in P_{S'}(F')$ if and only if $i\in S'$. Then since $P_S(F)=P_{S'}(F')$, we must have $S=S'$, and so $(F',S)$ is an $\LF$-cover of $G$. Since $\LF$-covers can be characterised purely combinatorially, and are therefore independent of characteristic, the result follows.
\end{customProof}

\pagebreak

\subsection{Some Classes of \textsf{LF}-Coverable Graphs}\label{LFCovGraphsSection}

We will next show that traceable, complete bipartite and trivially perfect graphs are $\LF$-coverable. We will show the same for trees, by way of constructing an algorithm for calculating an explicit $\LF$-cover.

We also conjecture that several well-known classes of graphs are $\LF$-coverable, with computational evidence to support this.

\begin{definition}
	If a graph $G$ on $n$ vertices contains $P_n$, we say that $G$ is \emph{traceable}. If it contains $C_n$, we say it that is \emph{Hamiltonian}.
\end{definition}

We recover, phrased differently, a similar result to one direction of \cite[Proposition 5.2]{ohtaniGraphsIdealsGenerated2011}:

\begin{proposition}\label{TraceableLFCov}
	Traceable graphs are $\LF$-coverable.
\end{proposition}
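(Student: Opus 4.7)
The plan is to exhibit an explicit $\LF$-cover of the form $(P_n, \varnothing)$, analogously to the preceding example showing that $(P_n, \varnothing)$ is an $\LF$-cover of $C_n$. Since $G$ is traceable, it contains $P_n$ as a (spanning) subgraph by definition. The first step is to argue that $P_n$ is actually maximal: any linear forest on $n$ vertices is a disjoint union of paths whose component sizes $m_1, \ldots, m_k$ sum to at most $n$, giving $\sum (m_i - 1) \leq n - 1$ edges, with equality precisely when the forest is a single path on $n$ vertices. Since $P_n$ attains this bound, it is a maximal linear forest in $G$.

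Next I would take $S = \varnothing \subseteq V(P_n)$ and verify the three conditions of \cref{LFCovDef}. Criteria \ref{LFCovDefCrit1} and \ref{LFCovDefCrit2} are vacuous, so the only thing to check is \ref{LFCovDefCrit3}. But $F \setminus S = P_n$ is connected and spans $V(G)$, so for every edge $\{i,j\} \in E(G)$ both endpoints automatically lie in the unique connected component of $F \setminus S$, and condition \ref{LFCovDefCrit3ii} is satisfied.

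There is no real obstacle here: the empty choice of $S$ trivialises everything once $F$ is known to be a spanning connected linear forest, which is exactly what tracing provides. Thus $(P_n, \varnothing)$ is an $\LF$-cover of $G$, and $G$ is $\LF$-coverable.
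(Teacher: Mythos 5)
Your proposal is correct and takes essentially the same approach as the paper, which simply takes $F$ to be a spanning path and $S=\varnothing$. The only difference is that you spell out the maximality of $P_n$ (via the edge count $\sum(m_i-1)\leq n-1$ for a linear forest) and the vacuity of the first two criteria, details the paper leaves implicit.
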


\begin{proof}
	Take $F$ to be a path connecting all the vertices of the graph, and set $S=\varnothing$.
\end{proof}

\begin{note}
	It is shown in \cite{posaHamiltonianCircuitsRandom1976} that almost all graphs are Hamiltonian. In particular, they are traceable, and so almost all binomial edge ideals have grade $n-1$.
\end{note}

\begin{proposition}
	Complete bipartite graphs $K_{a,b}$ are $\LF$-coverable for all $a,b\geq1$.
\end{proposition}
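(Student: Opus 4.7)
The plan is to split on whether $a=b$, handling the balanced case via traceability and the unbalanced case by an explicit construction in which the smaller part plays the role of $S$. The underlying idea is that $A$ (assumed the smaller side) is a vertex cover of $K_{a,b}$, so if we can arrange a maximal linear forest $F$ in which every $A$-vertex is interior to some path and no two $A$-vertices are adjacent in $F$, then $(F,A)$ will satisfy all three conditions of \cref{LFCovDef} automatically.

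Without loss of generality I would assume $a\leq b$. When $a=b$, I would observe that $K_{a,a}$ contains the Hamiltonian path
\[
	a_1 - b_1 - a_2 - b_2 - \cdots - a_a - b_a,
\]
so $K_{a,a}$ is traceable and hence $\LF$-coverable by \cref{TraceableLFCov}.

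When $a<b$, writing $A=\{a_1,\ldots,a_a\}$ and $B=\{b_1,\ldots,b_b\}$ for the bipartition, I would take $F$ to consist of the path
\[
	P : b_1 - a_1 - b_2 - a_2 - \cdots - b_a - a_a - b_{a+1}
\]
together with the isolated vertices $b_{a+2},\ldots,b_b$, and set $S=A$. The verification of the three $\LF$-cover conditions is then routine: each $a_i$ is interior to $P$, the $a_i$'s are pairwise separated in $P$ by $B$-vertices, and every edge of $K_{a,b}$ is incident to $A=S$. The one substantive point is that $F$ is a maximal linear forest, which I would justify by noting that in any linear forest each vertex has degree at most $2$, so (since every edge of $K_{a,b}$ meets $A$) no linear forest in $K_{a,b}$ can have more than $2\smallAbs{A}=2a$ edges, and $F$ attains this bound.

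The main obstacle is really the case $a=b$: there any maximal linear forest is a Hamiltonian path, and such a path must have exactly one $A$-vertex as an endpoint, so taking $S=A$ would violate the condition that no vertex of $S$ be a leaf of $F$. This forces us to fall back on traceability and $S=\varnothing$ in that single case, rather than unifying the argument.
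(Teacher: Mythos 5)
Your proof is correct and follows essentially the same route as the paper: split on whether the parts have equal size, use the Hamiltonian path with $S=\varnothing$ in the balanced case, and in the unbalanced case take the alternating path through the smaller part with that part as $S$. Your explicit verification that $F$ is maximal (via the degree-at-most-$2$ bound, giving at most $2a$ edges for any linear forest) is a nice touch that the paper leaves implicit.
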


\begin{proof}
	We may assume without loss of generality that $a\geq b$. Let $v_1,\ldots,v_a$ and $w_1,\ldots,w_b$ be the vertices in each partition.

	If $a=b$ then take
	\begin{equation*}
		F=\{\{v_1,w_1\},\{w_1,v_2\},\{v_2,w_2\},\ldots,\{v_a,w_b\}\}
	\end{equation*}
	and $S=\varnothing$.

	If $a>b$ then take
	\begin{equation*}
		F=\{\{v_1,w_1\},\{w_1,v_2\},\{v_2,w_2\},\ldots,\{v_a,w_b\},\{w_b,v_{a+1}\}\}
	\end{equation*}
	and $S=\{w_1,\ldots,w_b\}$.
\end{proof}

\begin{definition}
	We say that a graph $G$ is \emph{trivially perfect} if it can be constructed inductively, starting from a single vertex $K_1$, via either the disjoint union of smaller trivially perfect graphs, or the addition of a vertex adjacent to all other vertices of a smaller trivially perfect graph (that is, taking the join $\ast$ of a smaller trivially perfect graph with $K_1$).
\end{definition}

\begin{proposition}\label{TrivPerfLFCov}
	Trivially perfect graphs are $\LF$-coverable.
\end{proposition}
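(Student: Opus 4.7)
The plan is to proceed by induction on $n = \smallAbs{V(G)}$, mirroring the recursive definition of trivially perfect graphs. The base case $n = 1$ is immediate: the empty linear forest on $K_1$ together with $S = \varnothing$ is vacuously an $\LF$-cover. For the inductive step with $n \geq 2$, the outermost step of the construction of $G$ is either a disjoint union (if $G$ is disconnected) or a join with $K_1$ (if $G$ is connected), since a disjoint union of non-empty graphs is disconnected while a join is always connected.

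If $G = G_1 \sqcup G_2$, the induction hypothesis supplies $\LF$-covers $(F_1, S_1)$ and $(F_2, S_2)$, and $(F_1 \sqcup F_2, S_1 \cup S_2)$ is straightforwardly an $\LF$-cover of $G$: the combined forest is maximum because every linear forest of $G$ decomposes along connected components, and each criterion in \cref{LFCovDef} transfers componentwise since no edges cross between $G_1$ and $G_2$.

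If $G$ is connected then $G = H \ast v$ for some trivially perfect $H$ on $n - 1$ vertices with $v$ dominating, and the induction hypothesis produces an $\LF$-cover $(F_H, S_H)$ of $H$. When $H$ is traceable, $G$ inherits a Hamiltonian path by prepending $v$ and \cref{TraceableLFCov} concludes. When $H$ is not traceable, $F_H$ has at least two connected components; pick leaves $a, b$ from two distinct components and set
\begin{equation*}
    F = F_H \cup \{\{v, a\}, \{v, b\}\}, \qquad S = S_H \cup \{v\}.
\end{equation*}
Since $v$ has degree at most $2$ in any linear forest of $G$, deleting $v$ from a maximum linear forest of $G$ shows $\LF(G) \leq \LF(H) + 2 = \smallAbs{E(F)}$, so $F$ is maximum. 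The three criteria of \cref{LFCovDef} then hold: $v$ is internal in $F$ (degree $2$) and every vertex of $S_H$ retains its internal status since $F$ alters only the degrees of $v$, $a$, and $b$; the neighbors $a$ and $b$ of $v$ in $F$ were leaves of $F_H$ and are therefore excluded from $S_H$ by the induction hypothesis, so $S$ is independent in $F$; finally $F \setminus S = F_H \setminus S_H$, so every edge of $H$ remains covered while every edge of the form $\{v, w\}$ is covered by $v \in S$.

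The main obstacle is the join case, specifically the simultaneous need to produce a maximum linear forest in $G$ and place $v$ into $S$ without violating the non-leaf or non-adjacency requirements. Both points hinge on the fact that a non-traceable $H$ forces $F_H$ to have at least two components, which both supplies the two leaves $a, b$ needed for $v$'s two new neighbors (yielding the correct edge count) and guarantees that these neighbors lie outside $S_H$, so $v$ can safely be adjoined to $S$.
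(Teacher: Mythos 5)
Your proof is correct and follows essentially the same inductive approach as the paper: handle disjoint unions componentwise, and for a connected $G=H\ast v$ either extend a Hamiltonian path of $H$ through the universal vertex $v$ or bridge two components of $F_H$ through $v$ and adjoin $v$ to $S$. Your explicit maximality bound $\LF(G)\leq\LF(H)+2$, obtained by deleting $v$ from a maximum linear forest of $G$, makes precise a point the paper leaves implicit, but otherwise the argument is the same.
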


\begin{proof}
	If a graph $G_1$ has an $\LF$-cover $(F_1,S_1)$, and a graph $G_2$ has an $\LF$-cover $(F_2,S_2)$, then $G_1\sqcup G_2$ clearly has an $\LF$-cover given by $F=F_1\sqcup F_2$ and $S=S_1\cup S_2$. Then we may assume that $G$ is connected.

	The graph $K_1$ trivially has an $\LF$-cover, and so we may proceed by induction on $\smallAbs{V(G)}$.

	We can obtain any connected trivially perfect graph $G$ with more than one vertex by starting with a smaller trivially perfect graph $G'$, then taking $G=G'\ast K_1$. We have an $\LF$-cover $(F',S')$ of $G'$ by the inductive hypothesis.

	If $F'$ consists of a single connected component then it must be a path, so $G'$ is traceable. Clearly the join of any traceable graph with a single point remains traceable, and so we are done by \cref{TraceableLFCov}.
	
	If $F'$ consists of more than one connected component, then certainly two leaves from separate components, say $l_1$ and $l_2$, become joined in $G$. If we say $V(K_1)=\{v\}$, then we may extend $F'$ by joining these two connected components along $\{l_1,v\}$ and $\{v,l_2\}$ to create a new linear forest $F$. Since $l_1$ and $l_2$ are leaves in $F$, we may set $S=S'\cup\{v\}$. This will cover all the new edges added by $v$ in $G$, and so $(F,S)$ is an $\LF$-cover of $G$.
\end{proof}

\pagebreak

There are several related classes of graphs which we conjecture are also $\LF$-coverable:\label{LFCovGraphConjs}

\begin{definition}
	We say that a graph $G$ is a \emph{cograph} if it contains no induced $P_4$.
\end{definition}

\begin{conjecture}\label{CographConj}
	Cographs are $\LF$-coverable.
\end{conjecture}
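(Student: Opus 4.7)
The plan is to prove the conjecture by induction on $|V(G)|$, exploiting the recursive structure of cographs: every cograph is either $K_1$, a disjoint union $H_1 \sqcup H_2$ of smaller cographs, or a join $H_1 * H_2$ of smaller cographs (this is the standard cotree decomposition, and the join case characterises precisely the connected cographs on $\geq 2$ vertices). The base case $G=K_1$ is immediate, and the disjoint union case reduces to taking $F = F_1 \sqcup F_2$ and $S = S_1 \cup S_2$, exactly as in the opening paragraph of the proof of \cref{TrivPerfLFCov}.

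The substantive case is the join $G = H_1 * H_2$, where the induction hypothesis supplies $\LF$-covers $(F_1, S_1)$ and $(F_2, S_2)$ of $H_1$ and $H_2$. I would construct $(F, S)$ for $G$ by starting with $F_1 \sqcup F_2$ and then using cross edges from $K_{V(H_1),V(H_2)} \subseteq G$ (all of which exist in the join) to chain together components of $F_1 \sqcup F_2$, alternating sides at each step, thereby substantially reducing the number of components. The set $S$ would then be taken as $S_1 \cup S_2$ extended by carefully chosen vertices that became internal in $F$ due to the added cross edges, selected to cover all remaining cross edges while preserving the independence and non-leaf conditions of \cref{LFCovDef}. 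In the simplest subcase, when both $F_i$ consist of a single path, the chaining produces a Hamiltonian path in $G$ and we are done by \cref{TraceableLFCov}; more elaborate subcases involve multiple paths on each side.

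The main obstacle is making this construction go through in every case. Difficulties arise when $F_1$ and $F_2$ have very imbalanced numbers of components, when many components are singleton paths (whose unique vertex is neither a true leaf nor eligible to lie in $S$), or when the endpoints needed for chaining are already committed to $S_i$ and so cannot be absorbed into longer paths without violating criterion \ref{LFCovDefCrit2}. A careful case analysis---organised by the relative sizes $|V(H_1)|, |V(H_2)|$ and by the component structure of the $F_i$---will likely be required. To streamline this, it may help to first prove a strengthened inductive hypothesis: that every cograph admits an $\LF$-cover $(F, S)$ with additional structural properties (for instance, prescribed endpoint vertices available for chaining, or a bounded number of singleton components not already covered by $S$), so that the recursive step reduces to a clean gluing argument rather than an ad hoc construction.
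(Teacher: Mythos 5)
The statement you are trying to prove is not proved in the paper at all: it appears there as \cref{CographConj}, an open conjecture, supported only by \code{Macaulay2} verification for cographs on up to $13$ vertices (the paper proves the special case of trivially perfect graphs, \cref{TrivPerfLFCov}, where the join step only ever involves $K_1$). So there is no paper proof to compare against, and your proposal should be judged as an attempt to settle an open question.

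As it stands there is a genuine gap, and you have essentially identified it yourself: the join case is described but not carried out. Two specific obligations are left unresolved. First, an $\LF$-cover requires $F$ to be a \emph{maximal} linear forest in $G=H_1\ast H_2$, i.e.\ $\smallAbs{E(F)}=\LF(G)$, and $\LF(H_1\ast H_2)$ is not controlled in any simple way by $\LF(H_1)$ and $\LF(H_2)$; your chaining construction must be shown to produce exactly $\LF(G)$ edges, which is already a nontrivial combinatorial statement about joins that the proposal does not address. Second, criterion \ref{LFCovDefCrit3} of \cref{LFCovDef} must hold for \emph{every} cross edge, and there are $\smallAbs{V(H_1)}\cdot\smallAbs{V(H_2)}$ of them; after chaining, a component of $F\setminus S$ contains vertices from both sides only along the chosen chain, so almost all cross edges must be covered by vertices of $S$, while $S$ must remain an independent set of non-leaves of $F$ (criteria \ref{LFCovDefCrit1} and \ref{LFCovDefCrit2}) and must also still cover the internal edges of $H_1$ and $H_2$ that $S_1\cup S_2$ covered, even though the chaining may have turned former leaves into internal vertices or merged components in ways that invalidate condition \ref{LFCovDefCrit3}\,\ref{LFCovDefCrit3ii} for the old edges. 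The ``strengthened inductive hypothesis'' you gesture at (prescribed free endpoints, control of singleton components, etc.)\ is precisely what would be needed, but it is neither formulated precisely nor proved to propagate through joins and disjoint unions, so the induction does not close. The overall strategy (cotree induction, with the disjoint-union step as in the proof of \cref{TrivPerfLFCov} and the join step as the substantive case) is a reasonable line of attack on the conjecture, but the hard case is exactly the part still missing; note also that by \cref{MainLFThm} and \hyperref[MainLFThmIndep]{independence of the choice of maximal linear forest}, you are free to build $F$ from scratch rather than extend $F_1\sqcup F_2$, which may be a more flexible formulation of the same problem.
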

We have verified this conjecture using \citeMacaulay{} for all cographs with up to $13$ vertices.

Trivially perfect graphs can be alternatively characterised as those with no induced $P_4$ or $C_4$ (see \cite[Theorem 3]{jing-hoQuasithresholdGraphs1996}), and so form a subclass of cographs. Then \cref{CographConj} implies \cref{TrivPerfLFCov}.

\tcbrule{}
\vspace{-0.5\baselineskip}

\begin{definition}
	Let $\rho=(\sigma_1,\ldots,\sigma_m)$ be a permutation of the integers $1,\ldots,m$. Then we define the \emph{permutation graph} $G$ associated to $\rho$ by setting $V(G)=\{1,\ldots,m\}$ and
	\begin{equation*}
		E(G)=\{\{v,w\}:\text{\normalfont$\sigma_w$ appears before $\sigma_v$ in $\rho$, $1\leq v<w\leq m$}\}
	\end{equation*}
\end{definition}

\begin{conjecture}\label{PermGraphConj}
	Permutation graphs are $\LF$-coverable.
\end{conjecture}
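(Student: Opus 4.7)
The plan is to proceed by induction on $m = \smallAbs{V(G)}$, using the modular decomposition of permutation graphs: every such $G$ is either disconnected, the join $G_1 \ast G_2$ of two smaller permutation graphs, or prime (in the sense of modular decomposition), with each quotient again being a permutation graph. The disconnected case is immediate, since disjoint unions of $\LF$-coverable graphs are $\LF$-coverable, as observed in the proof of \cref{TrivPerfLFCov}. For the series case, my first attempt would be to mimic the proof of \cref{TrivPerfLFCov}: take $\LF$-covers $(F_1, S_1)$ and $(F_2, S_2)$ of $G_1$ and $G_2$ from the inductive hypothesis, then either bridge two leaves of $F_1$ and $F_2$ across the join (adding the bridging vertex to $S$), or, when no suitable leaves are available on both sides, absorb one entire part's vertex set into $S$ to cover every cross-edge of the join while leaving the forest structure on the other part intact.

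The main obstacle will be the prime case. Here I would exploit the intersection model of permutation graphs: each vertex corresponds to a line segment joining a point on a top line to a point on a bottom line, with two vertices adjacent iff the segments cross. A natural candidate for a maximal linear forest is a chain of pairwise crossing segments (corresponding to a longest decreasing subsequence of $\rho$), extended greedily via bridging segments that reach the remaining vertices. The cover set $S$ should then consist of a small collection of ``transversal'' segments chosen to cross every edge of $G$ not already in the forest. Verifying that such a construction simultaneously respects the three criteria of \cref{LFCovDef} for every prime permutation graph is the crux of the difficulty, as prime permutation graphs can exhibit rather rigid crossing patterns that may force either a leaf into $S$ or two adjacent vertices of $F$ into $S$.

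As an alternative avenue that would bypass the combinatorial construction, one could try to apply \cref{LocCohoAssPrimeThm} directly in prime characteristic: for a fixed maximal linear forest $F$ in $G$, it suffices to exhibit a non-zero element of $H_{\mathcal{J}(G)}^{\smallAbs{E(F)}}(R)$, since then $\Ass_R(H_{\mathcal{J}(G)}^{\smallAbs{E(F)}}(R))$ is non-empty, and \cref{LocCohoAssPrimeThm} together with \cref{MainLFThm} would yield $\LF$-coverability in all characteristics. The root description from \cref{RootLemma} suggests that one should search for an element of $(\mathcal{J}(F) : \mathcal{J}(G)) \setminus \mathcal{J}(F)$ built from the permutation data of $\rho$. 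In either approach, computational verification for all permutation graphs up to a fixed order via \citeMacaulay{}, in the spirit of the evidence cited for \cref{CographConj}, would be a valuable first step before committing to a full inductive proof.
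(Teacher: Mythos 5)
You have proposed a proof strategy for what is, in the paper, only a conjecture: \cref{PermGraphConj} is stated without proof, supported solely by \code{Macaulay2} verification for permutation graphs on up to $11$ vertices, so there is no argument of the paper to compare yours against. Judged on its own terms, your proposal is not yet a proof, and the gaps are substantive rather than routine. The prime case of the modular decomposition, which you yourself identify as the crux, is left entirely open, and it is exactly where the content of the conjecture lies. The join case as sketched also does not go through: the inductive hypothesis gives you maximal linear forests of $G_1$ and $G_2$, but their union (even after bridging two leaves) is in general far from a maximal linear forest of $G_1\ast G_2$ --- already for $K_{a,b}$, which is a join of two edgeless graphs, $\LF$ jumps from $0+0$ to roughly $2\min(a,b)$, so the forest must be rebuilt from scratch using cross edges, and maximality of the rebuilt forest is precisely the kind of global statement induction was supposed to avoid. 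Likewise, ``absorbing one entire part's vertex set into $S$'' conflicts with criteria \ref{LFCovDefCrit1} and \ref{LFCovDefCrit2} of \cref{LFCovDef} whenever the inherited forest on that part has edges or leaves there, so this device cannot be applied to the covers handed to you by induction.

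Your alternative route through local cohomology is circular rather than a bypass: by \cref{RootLemma}, \cref{AssLemma} and \cref{LFCovAssEquiv}, exhibiting a nonzero element of $(\mathcal{J}(F):\mathcal{J}(G))/\mathcal{J}(F)$ is \emph{equivalent} to exhibiting a set $S$ with $(F,S)$ an $\LF$-cover of $G$ (equivalently, by \cref{MainLFThm}, to $\mathcal{J}(G)$ being of K\"onig type), so nothing is gained by passing to $H_{\mathcal{J}(G)}^{\smallAbs{E(F)}}(R)$ unless you can construct such an element explicitly from the permutation $\rho$ --- which is the original combinatorial problem in algebraic clothing. Finally, the computational check you suggest is exactly the evidence the paper already records for \cref{PermGraphConj} (and for \cref{CographConj}, which it implies). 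A genuine advance would be either an explicit construction of $(F,S)$ from the segment/crossing model that is verified against all three criteria of \cref{LFCovDef}, or a counterexample search targeted at prime permutation graphs, where any failure would have to occur.
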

We have verified this conjecture using \citeMacaulay{} for all permutation graphs with up to $11$ vertices.

Since permutation graphs are a superclass of cographs (see \cite[pp.280--281]{bosePatternMatchingPermutations1998}), \cref{PermGraphConj} implies \cref{CographConj}.

\tcbrule{}
\vspace{-0.5\baselineskip}

\begin{definition}
	Let $I_1,\ldots,I_m$ be a collection of intervals on the real line. Then we define the \emph{interval graph} $G$ associated to this collection by setting $V(G)=\{1,\ldots,m\}$ and
	\begin{equation*}
		E(G)=\{\{v,w\}:\text{$I_v\cap I_w\neq\varnothing$, $1\leq v<w\leq m$}\}
	\end{equation*}
\end{definition}

\begin{conjecture}\label{IntGraphConj}
	Interval graphs are $\LF$-coverable.
\end{conjecture}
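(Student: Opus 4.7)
The plan is to proceed by induction on the number $m$ of maximal cliques in the clique-path decomposition of a connected interval graph $G$; the general case then reduces to handling connected components separately, since $\LF$-covers combine trivially over disjoint unions (as in the proof of \cref{TrivPerfLFCov}). For the base case $m = 1$, the graph $G$ is complete and hence traceable, so \cref{TraceableLFCov} provides an $\LF$-cover.

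For the inductive step, write the clique path as $C_1 - \cdots - C_m$ and set $V_m = C_m \setminus C_{m-1}$ and $B = C_m \cap C_{m-1}$; both are non-empty because $C_m$ is maximal and the clique path is connected. Then $G' = G \setminus V_m$ is an interval graph whose clique path has length $m - 1$, so by induction we obtain an $\LF$-cover $(F', S')$ of $G'$. The goal is to extend $(F', S')$ to an $\LF$-cover of $G$ by appending a path $v_1 - \cdots - v_k$ through $V_m$ at an appropriate location near $B$. Crucially, every edge of $G$ not already in $G'$ lies inside the clique $V_m \cup B \subseteq C_m$, so only edges within $V_m$ and from $V_m$ into $B$ need to be handled by the extension.

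Since $B$ is itself a clique of $G'$, its edges are already covered by $(F', S')$ in a structured way: writing $B_0 = B \setminus S'$, the clique property forces $B_0$ to lie in a single path component $P_{B_0}$ of $F' \setminus S'$. The extension I propose appends the $V_m$-path to an endpoint of $P_{B_0}$, so that $V_m$ joins this component and thereby covers all edges from $V_m$ into $B_0$, while vertices of $B \cap S'$ cover the edges from $V_m$ into $B \cap S'$. The main obstacle will be the situation where both endpoints of $P_{B_0}$ are adjacent in $F'$ to vertices of $S'$ rather than being $F'$-leaves themselves, so that naively appending an edge would raise an endpoint's degree in $F$ to $3$ and violate the linear forest property. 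Overcoming this will likely require either (a) locally modifying $F'$ by deleting the offending $F'$-edge incident to some $s \in S'$ (which remains covered since $s \in S$) while verifying maximality of the resulting $F$ in $G$, or (b) strengthening the inductive hypothesis to guarantee that $P_{B_0}$ always has an $F'$-leaf as an endpoint, analogously to how \cref{TrivPerfLFCov} inductively preserves the joinability of the linear forest.
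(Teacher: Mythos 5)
You should note first that the paper offers no proof of this statement at all: it is left as \cref{IntGraphConj}, supported only by machine verification for small vertex counts, so your argument would have to stand entirely on its own. As written it does not: it is a strategy rather than a proof, and the decisive step of the induction is exactly the point you defer at the end (``will likely require either (a) \ldots or (b) \ldots''), with neither option carried out.

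Beyond the gap you acknowledge, there are concrete obstructions to the plan as stated. First, the attachment point: the component $P_{B_0}$ of $F'\setminus S'$ containing $B_0$ may extend far outside $C_m$, so its endpoints need not lie in $B$ at all; since a vertex $v_1\in V_m$ is adjacent in $G$ only to vertices of $C_m$, the connecting edge from $v_1$ to an endpoint of $P_{B_0}$ need not be an edge of $G$, and attaching anywhere in the interior of the component destroys the linear-forest property. Second, your remedy (a) of deleting the $F'$-edge joining the relevant endpoint to some $s\in S'$ can leave $s$ of degree at most one in the new forest $F$, violating the first criterion of \cref{LFCovDef} (no vertex of $S$ may be a leaf of $F$); removing $s$ from $S$ instead would uncover the edges $s$ was covering, and in either case the edge count changes, so maximality of $F$ in $G$ needs a separate argument. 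Third, maximality is never established even in the favourable case: you add $\smallAbs{V_m}$ edges and implicitly need $\LF(G)=\LF(G\setminus V_m)+\smallAbs{V_m}$, which is not obvious, since a maximal linear forest of $G$ may use up to $2\smallAbs{V_m}$ edges meeting $V_m$. Finally, remedy (b), strengthening the inductive hypothesis so that $P_{B_0}$ always ends in an $F'$-leaf lying in $B$, is precisely the open combinatorial content of the conjecture, and you give no argument that such a cover can always be maintained along the clique path. The reduction to connected graphs, the base case, and the observation that all new edges lie inside $C_m$ are fine, and the overall shape is a sensible analogue of the proof of \cref{TrivPerfLFCov}, but the conjecture remains open after this proposal.
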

We have verified this conjecture using \citeMacaulay{} for all interval graphs with up to $10$ vertices.

Interestingly, trivially perfect graphs are exactly the graphs which are both cographs and interval graphs (again, see \cite[Theorem 3]{jing-hoQuasithresholdGraphs1996}), and so \cref{IntGraphConj} implies \cref{TrivPerfLFCov} also.

\tcbrule{}

Before \hyperref[TreeAlgoProof]{proving} that trees are $\LF$-coverable, we first introduce a definition and some notation:

\begin{definition}\label{EBPDef}
	Let $T$ be a tree and $b$ and branch point of $T$. We say that $b$ is \emph{extremal} if it is adjacent to at most one other branch point.
\end{definition}

\begin{note}
	\cref{EBPDef} is not established terminology.
\end{note}

\begin{notation}
	For a tree $T$ and vertex $v\in V(T)$, we denote by $L_T(v)$ the set of vertices of $T$ which are leaves adjacent to $v$, and set $L_T[v]\defeq L_T(v)\cup\{v\}$.
\end{notation}

\begin{proposition}\label{EBPExistence}
	Let $T$ be a tree on $n$ vertices, where $n\geq3$. Then $T$ has an extremal branch point.
\end{proposition}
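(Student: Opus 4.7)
The plan is to consider the subgraph $T^\circ$ of $T$ induced on its branch points and extract the required vertex as a ``leaf'' of this smaller structure. The first step will be to verify that $T^\circ$ is a non-empty subtree of $T$. Connectedness will follow from the fact that for any two branch points $u,v\in V(T)$, every interior vertex $w$ of the unique $u$--$v$ path in $T$ has $\deg_T(w)\geq 2$, so $w$ is itself a branch point; consequently the entire path lies in $T^\circ$, giving connectedness of the induced subgraph. Non-emptiness is a straightforward degree-sum observation: if every vertex of $T$ were a leaf we would have $\sum_v\deg_T(v)\leq n$, contradicting $\sum_v\deg_T(v)=2(n-1)$ once $n\geq 3$.

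Having established that $T^\circ$ is a non-empty tree, I would then pick any vertex $b$ of $T^\circ$ of degree at most one in $T^\circ$: when $\smallAbs{V(T^\circ)}=1$ the unique vertex has no branch-point neighbours at all, and when $\smallAbs{V(T^\circ)}\geq 2$ any leaf of $T^\circ$ has exactly one neighbour in $T^\circ$. Such a $b$ is by construction adjacent in $T$ to at most one other branch point, and so is extremal in the sense of \cref{EBPDef}.

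The main obstacle, such as it is, lies in the first step: confirming that restricting to branch points yields a connected subgraph. Once one recognises that branch points of $T$ are exactly the vertices of degree at least two, this reduces to the elementary tree-theoretic fact that interior vertices of a path in a tree automatically have degree $\geq 2$, hence are themselves branch points. The rest of the argument is then standard.
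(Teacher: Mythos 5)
Your argument is correct, and it takes a genuinely different route from the paper. The paper proves \cref{EBPExistence} by induction on $n$: delete a leaf $l$, take an extremal branch point $b$ of $T\setminus\{l\}$, and observe that upon reattaching $l$ either $b$ stays extremal or, if $l$ attaches to a leaf $l'$ adjacent to $b$, that $l'$ becomes an extremal branch point. Your proof is instead direct and structural: the branch points (i.e.\ the non-leaf vertices, which is indeed the reading of ``branch point'' the paper's usage forces -- under the degree-$\geq 3$ convention the statement would already fail for paths) induce a subgraph $T^\circ$ that is non-empty by the degree-sum count $2(n-1)>n$ and connected because interior vertices of any path in $T$ have degree at least $2$, hence $T^\circ$ is a tree; a leaf of $T^\circ$ (or its unique vertex) is then adjacent to at most one other branch point, exactly as \cref{EBPDef} requires. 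The paper's induction is shorter but needs the small case analysis of where the reattached leaf lands; your argument exposes more structure, showing in fact that any leaf of the branch-point subtree works (so there are at least two extremal branch points whenever $T^\circ$ has at least two vertices), which also gives a clean way to locate the vertex $b$ chosen at each step of \cref{TreeAlgo}.
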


\begin{proof}
	The proof will be by induction on $n$. The case $n=3$ is clear, so suppose that $n>3$. Remove a leaf $l$ and apply the inductive hypothesis to $T\setminus\{l\}$, so $T\setminus\{l\}$ has an extremal branch point $b$. When $l$ is reconnected, if it is adjacent to a leaf $l'$ of $b$ then $l'$ is now an extremal branch point. Otherwise $b$ will remain an extremal branch point, and so we are done. 
\end{proof}

We can now \hyperref[TreeAlgoProof]{prove} that trees are $\LF$-coverable, and exhibit \hyperref[TreeAlgo]{an algorithm} for efficiently computing an $\LF$-cover of any tree $T$. A distinct method is given in \cite[Algorithm 5.8]{laclairInvariantsBinomialEdge2023} for computing a linear forest and set satisfying their criteria.

These algorithms are of independent interest, since they effectively calculate the dimension of $\mathcal{J}(T)$ by \cref{DimGradeRemark}. For example, an algorithm to compute this dimension is given as a special case of \cite[Theorem 2.2]{masciaKrullDimensionRegularity2020}.

\begin{theorem}\label{TreesLFCov}
	Let $T$ be a tree on $n$ vertices, where $n\geq2$. Then $T$ is $\LF$-coverable, and we can inductively construct a maximal linear forest $F$ in $T$ and a set of vertices $S\subseteq V(F)$ such that $(F,S)$ is an $\LF$-cover of $T$.
\end{theorem}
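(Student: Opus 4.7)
The plan is to prove this by strong induction on $n = |V(T)|$, guided by the notion of an extremal branch point from \cref{EBPDef} and its existence in \cref{EBPExistence}. The base case $n = 2$ (and more generally the sub-case where $T$ is itself a path) is immediate: take $F = T$ and $S = \varnothing$. For the inductive step, assume $T$ is not a path, so $T$ has a branch point, and fix an extremal branch point $b$ with attached leaves $L_T(b) = \{l_1, \ldots, l_k\}$. Since $\deg_T(b) \geq 3$ and $b$ has at most one non-leaf neighbour, we have $k \geq 2$.

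If $b$ has no non-leaf neighbour then $T = K_{1, k}$ with $k \geq 3$, and I would verify directly that $F = \{\{b, l_1\}, \{b, l_2\}\}$ together with $S = \{b\}$ is an $\LF$-cover. Otherwise, let $b'$ denote the unique non-leaf neighbour of $b$, and set $T^* = T \setminus L_T[b]$. This is a tree on $n - k - 1 \leq n - 3$ vertices; it contains $b'$ together with at least one further neighbour of $b'$ (as $b'$ is not a leaf of $T$), so $|V(T^*)| \geq 2$ and the inductive hypothesis yields an $\LF$-cover $(F^*, S^*)$ of $T^*$.

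The construction of $(F, S)$ extends $(F^*, S^*)$ by adjoining $b$ to $S$ and exactly two edges at $b$ to $F^*$, with the choice of edges dictated by the state of $b'$ in $(F^*, S^*)$. If $b' \notin S^*$ and $\deg_{F^*}(b') \leq 1$, we add $\{b, b'\}$ and $\{b, l_1\}$ to $F^*$. In every other case, that is when $b' \in S^*$ or $b'$ is already internal in $F^*$, we instead add $\{b, l_1\}$ and $\{b, l_2\}$, leaving $\{b, b'\}$ outside $F$ to be covered by $b \in S$.

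The main obstacle is criterion \ref{LFCovDefCrit2} of \cref{LFCovDef}: naively adding $\{b, b'\}$ to $F$ whenever $b' \in S^*$ would place the adjacent-in-$F$ vertices $b$ and $b'$ both into $S$, and the case split above is engineered precisely to prevent this. The remaining checks are routine: criterion \ref{LFCovDefCrit1} holds since $b$ has degree $2$ in $F$ and vertices of $S^*$ only gain degree when passing from $F^*$ to $F$; criterion \ref{LFCovDefCrit3} holds because every edge at $b$ outside $F$ is covered by $b \in S$, while every other edge of $T$ lies in $T^*$ and the components of $F \setminus S$ and $F^* \setminus S^*$ agree on $V(T^*) \setminus S^*$; and maximality of $F$ follows from $|F| = |F^*| + 2$ together with the bound $\LF(T) \leq \LF(T^*) + 2$, which holds because any linear forest in $T$ has at most two edges at $b$ and restricts elsewhere to a linear forest in $T^*$. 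The explicit recursion then furnishes the algorithm the theorem promises.
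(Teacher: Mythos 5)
There is a genuine gap, and it sits at the very first step of your inductive construction: the claim that an extremal branch point $b$ satisfies ``$\deg_T(b)\geq 3$ and $b$ has at most one non-leaf neighbour, so $k\geq 2$''. \cref{EBPDef} only restricts the number of \emph{branch points} adjacent to $b$; a neighbour of $b$ of degree $2$ is neither a leaf nor (on your reading, where branch points have degree $\geq 3$) a branch point, so extremality says nothing about it. Concretely, take $T$ to be the spider obtained by subdividing every edge of $K_{1,3}$: a centre $c$ adjacent to $u_1,u_2,u_3$, each $u_i$ adjacent to a leaf $w_i$. The unique branch point is $c$, which is vacuously extremal, yet $L_T(c)=\varnothing$, so $k=0$: there is no $l_1$ or $l_2$ to adjoin, and $T\setminus L_T[c]=T\setminus\{c\}$ is a disconnected forest of three edges, so the inductive hypothesis (stated for trees) does not apply as you invoke it. Even if you instead read ``branch point'' as any internal vertex --- which is evidently what the paper intends, since the $n=3$ base case of \cref{EBPExistence} is false for the degree-$\geq 3$ notion --- your deduction still fails: an extremal vertex may have $k=1$ (e.g.\ $u_1$ in the same spider), and then your ``every other case'', which requires a second leaf $l_2$ when $b'$ already lies in $S^*$ or is internal in $F^*$, cannot be carried out; for the spider one in fact needs to place the \emph{neighbour} $c$, not $b$, into $S$, so no scheme that always adds $b$ to $S$ together with two edges at $b$ can succeed.

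This is exactly where your proof diverges from the paper's. The paper runs the same extremal-branch-point induction, but (implicitly) counts every non-leaf vertex as a branch point, so that ``extremal'' genuinely means all but at most one neighbour of $b$ is a leaf; and it splits into two cases by the number of leaves at $b$: if $b$ has a single leaf $l$ it deletes only $l$, adds the single edge $\{b,l\}$ to $F'$, and does \emph{not} enlarge $S$ (this is the case your construction is missing, and it is what absorbs the degree-$2$ stretches in examples like the spider), while only in the case of at least two leaves does it delete $L_T[b]$, add two leaf edges, and put $b$ into $S$. Your maximality argument ($\smallAbs{E(F)}=\smallAbs{E(F^*)}+2$ together with $\LF(T)\leq\LF(T^*)+2$) and your verification of criteria \ref{LFCovDefCrit1}--\ref{LFCovDefCrit3} of \cref{LFCovDef} are fine as far as they go, but they all presuppose the unsupported inequality $k\geq 2$, so the argument as written does not prove \cref{TreesLFCov}. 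To repair it you would need either to adopt the broader notion of branch point and add a one-leaf case in the style of the paper, or to handle vertices of degree $2$ near $b$ by a separate argument.
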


\begin{proof}\label{TreeAlgoProof}
	The proof will be by induction on $n$. The case $n=2$ is trivial: take $F=T$ and $S=\varnothing$.

	Then suppose that $n>2$, and that the statement of the theorem holds for all trees with fewer than $n$ vertices.

	We may choose an extremal branch point $b$ of $T$ by \cref{EBPExistence}, and consider two cases:
	\begin{enumerate}
		\item\label{TreeAlgoCase1} If $b$ has a single leaf $l$, then $b$ must be adjacent to another branch point $b'$, since $T$ cannot be a star because $n>2$. Let $T'=T\setminus\{l\}$. By the inductive hypothesis, $T'$ has an $\LF$-cover $(F',S')$.

		Let $F=F'\cup\{\{b,l\}\}$, and set $S=S'$. We claim that $(F,S)$ is an $\LF$-cover of $T$. The three conditions of \cref{LFCovDef} are clearly satisfied, so it remains only to show that $F$ is a maximal linear forest in $G$. Since $\{b,b'\}$ must satisfy the conditions of \cref{LFCovDef} for $(F',S')$, we must either have that $\{b,b'\}\in E(F')$, in which case $F$ is a linear forest since we will just be extending the path containing this edge, or $\{b,b'\}\notin E(F')$ and $b'\in S'$, in which case $\{b,l\}$ will be its own connected component in $F$, and so $F$ is a linear forest in this case also. Maximality is clear, since $T$ has one more edge than $T'$, and $F$ has one more edge than $F'$ (which is maximal in $T'$).
		
		\item\label{TreeAlgoCase2} Now suppose that $b$ has at least two leaves $l_1$ and $l_2$, and set $T'=T\setminus L_T[b]$. Again by the inductive hypothesis, $T'$ has an $\LF$-cover $(F',S')$. Note that if $T$ is a star, $T'$ will be the null graph $K_0$, which trivially has $\LF$-cover $(\varnothing,\varnothing)$.

		Let
		\begin{equation*}
			F=F'\cup\{\{b,l_1\},\{b,l_2\}\}
		\end{equation*}
		and set $S=S'\cup\{b\}$. In this case, $\{\{b,l_1\},\{b,l_2\}\}$ will be its own connected component in $F$, since we removed $L_T[b]$ from $T$ when obtaining $F'$, and so $F$ is a linear forest. It is maximal in $T$ since we are adding a star (centred at $v$) to $T'$, and so we can add at most two edges to $F'$ before introducing a claw (that is, $K_{1,3}$). Furthermore, every other leaf edge of $b$, and the edge $\{b,b'\}$ for some branch point $b'\in E(T)$ which will exist if $T$ is not a star, will be covered by $b\in S$, and so $(F,S)$ is an $\LF$-cover of $T$. This concludes the proof.\qedhere
	\end{enumerate}
\end{proof}

\pagebreak

The resulting algorithm is very simple:

\begin{algorithm}\label{TreeAlgo}\mbox{}\\
	\textit{Input:} A tree $T$.
	\\\textit{Output:} An $\LF$-cover $(F,S)$ of $T$.

\tcbdoublerule{}

\begin{lstlisting}
$F\leftarrow K_0$
$S\leftarrow\varnothing$
while $T\neq K_0$ do
£\indentrule£if $T$ has a single edge $e$ then
£\indentrule[2]	£$F\leftarrow F\cup\{e\}$
£\indentrule[2]	£$T\leftarrow K_0$
£\indentrule£else
£\indentrule[2]£$b\leftarrow$ some extremal branch point of $T$
£\indentrule[2]£if $b$ has a single leaf $l$ then
£\indentrule[3]£$F\leftarrow F\cup\{\{b,l\}\}$
£\indentrule[3]£$T\leftarrow T\setminus\{l\}$
£\indentrule[2]£else
£\indentrule[3]£$l_1\leftarrow$ some leaf of $b$
£\indentrule[3]£$l_2\leftarrow$ some leaf of $b$ other than $l_1$
£\indentrule[3]£$F\leftarrow F\cup\{\{b,l_1\},\{b,l_2\}\}$
£\indentrule[3]£$S\leftarrow S\cup\{b\}$
£\indentrule[3]£$T\leftarrow T\setminus L_T[b]$
£\indentrule[2]£end
£\indentrule£end
end
return $(F,S)$
\end{lstlisting}

\vspace{-0.5\baselineskip}
\tcbdoublerule{}

\end{algorithm}

We will now illustrate the operation of \cref{TreeAlgo}:

\begin{example}
	Let
	\begin{equation*}
		T=\quad\begin{tikzpicture}[x=0.675cm,y=0.675cm,every node/.style={circle,draw=black,fill=black,inner sep=0pt,minimum size=5pt},label distance=0.1cm,line width=0.25mm,baseline={([yshift=-0.5ex]current bounding box.center)}]
        	\node[label={0:$1$}](1) at (2,0) {};
       		\node[label={0:$2$}](2) at (2,1) {};
			\node[label={0:$3$}](3) at (2,2) {};
			\node[label={0:$4$}](4) at (1,3) {};
			\node[label={0:$5$}](5) at (3,3) {};
			\node[label={180:$6$}](6) at (0,3) {};
			\node[label={0:$7$}](7) at (1,4) {};
			\node[label={0:$8$}](8) at (3,4) {};
			\node[label={180:$9$}](9) at (0,5) {};
			\node[label={90:$10$}](10) at (1,5) {};
			\node[label={0:$11$}](11) at (2,5) {};

			\foreach \from/\to in {1/2,2/3,3/4,3/5,4/6,4/7,5/8,7/9,7/10,7/11}
        		\draw[-] (\from.center) -- (\to.center);
   		\end{tikzpicture}
	\end{equation*}

	We will compute $F$ and $S$ inductively, showing that we can obtain

	\begin{equation*}
		F=\quad\begin{tikzpicture}[x=0.675cm,y=0.675cm,every node/.style={circle,draw=black,fill=black,inner sep=0pt,minimum size=5pt},label distance=0.1cm,line width=0.25mm,baseline={([yshift=-0.5ex]current bounding box.center)}]
        	\node[label={0:$1$}](1) at (2,0) {};
       		\node[label={0:$2$}](2) at (2,1) {};
			\node[label={0:$3$}](3) at (2,2) {};
			\node[label={0:$4$}](4) at (1,3) {};
			\node[label={0:$5$}](5) at (3,3) {};
			\node[label={180:$6$}](6) at (0,3) {};
			\node[label={0:$7$}](7) at (1,4) {};
			\node[label={0:$8$}](8) at (3,4) {};
			\node[label={180:$9$}](9) at (0,5) {};
			\node[label={0:$11$}](11) at (2,5) {};

			\foreach \from/\to in {1/2,3/4,3/5,4/6,5/8,7/9,7/11}
        		\draw[-] (\from.center) -- (\to.center);
   		\end{tikzpicture}
	\end{equation*}
	and $S=\{3,7\}$ by following \cref{TreeAlgo}.

	\begin{note}
		$F$ and $S$ may depend on choices of extremal branch points and leaves in \cref{TreeAlgo}. In this example there are many alternative choices, as a somewhat trivially different $\LF$-cover we could remove say $\{7,9\}$ and add $\{7,10\}$ in $E(F)$ whilst keeping $S$ the same.
	\end{note}

	We will explain in detail the first two steps of \cref{TreeAlgo} in this example, before presenting every step in \hyperref[LFCovExample]{a table}.

	We begin with the full tree $T$, and select an extremal branch point. Here we choose $b=7$ (as indicated by a square):
	\begin{equation*}
		T=\quad\begin{tikzpicture}[x=0.6cm,y=0.6cm,every node/.style={circle,draw=black,fill=black,inner sep=0pt,minimum size=5pt},label distance=0.1cm,line width=0.25mm,baseline={([yshift=-0.5ex]current bounding box.center)}]
       	 	\node[label={0:$1$}](1) at (2,0) {};
			\node[label={0:$2$}](2) at (2,1) {};
			\node[label={0:$3$}](3) at (2,2) {};
			\node[label={0:$4$}](4) at (1,3) {};
			\node[label={0:$5$}](5) at (3,3) {};
			\node[label={180:$6$}](6) at (0,3) {};
			\node[label={0:$7$}](7) at (1,4) {};
			\node[label={0:$8$}](8) at (3,4) {};
			\node[label={180:$9$}](9) at (0,5) {};
			\node[label={90:$10$}](10) at (1,5) {};
			\node[label={0:$11$}](11) at (2,5) {};

			\node[rectangle,draw,inner sep=5pt,line width=1pt,fill=none] at (7){};

			\foreach \from/\to in {1/2,2/3,3/4,3/5,4/6,4/7,5/8,7/9,7/10,7/11}
        		\draw[-] (\from.center) -- (\to.center);
    	\end{tikzpicture}
	\end{equation*}
	This puts us in \hyperref[TreeAlgoCase2]{Case 2} of \hyperref[TreeAlgoProof]{the proof} of \cref{TreesLFCov}. With notation as in \hyperref[TreeAlgoCase2]{that case}, we choose $l_1=9$ and $l_2=11$. Then the edges $\{7,9\}$ and $\{7,11\}$ will be added to the linear forest $F$, and $7$ will be added to $S$ (as indicated by a circle):
	\begin{equation*}
		F=\quad\begin{tikzpicture}[x=0.6cm,y=0.6cm,every node/.style={circle,draw=black,fill=black,inner sep=0pt,minimum size=5pt},label distance=0.1cm,line width=0.25mm,baseline={([yshift=-0.5ex]current bounding box.center)}]
			\node[label={0:$7$}](7) at (1,4) {};
			\node[label={180:$9$}](9) at (0,5) {};
			\node[label={0:$11$}](11) at (2,5) {};

			\node[circle,draw,inner sep=3.5pt,line width=1pt,fill=none] at (7){};

			\foreach \from/\to in {7/9,7/11}
    			\draw[-] (\from.center) -- (\to.center);
    	\end{tikzpicture}
	\end{equation*}
	Note that $7$ then covers $\{4,7\}$ and $\{7,10\}$.

	We then remove $7$ and its leaves from $T$. This leaves us with
	\begin{equation*}
		T=\quad\begin{tikzpicture}[x=0.6cm,y=0.6cm,every node/.style={circle,draw=black,fill=black,inner sep=0pt,minimum size=5pt},label distance=0.1cm,line width=0.25mm,baseline={([yshift=-0.5ex]current bounding box.center)}]
  		 	\node[label={0:$1$}](1) at (2,0) {};
	 		\node[label={0:$2$}](2) at (2,1) {};
			\node[label={0:$3$}](3) at (2,2) {};
			\node[label={0:$4$}](4) at (1,3) {};
			\node[label={0:$5$}](5) at (3,3) {};
			\node[label={180:$6$}](6) at (0,3) {};
			\node[label={0:$8$}](8) at (3,4) {};

			\foreach \from/\to in {1/2,2/3,3/4,3/5,4/6,5/8}
    			\draw[-] (\from.center) -- (\to.center);
    	\end{tikzpicture}
	\end{equation*}
	for which we must continue calculating an $\LF$-cover.

	Then we next select the extremal branch point $b=4$:
	\begin{equation*}
		T=\quad\begin{tikzpicture}[x=0.6cm,y=0.6cm,every node/.style={circle,draw=black,fill=black,inner sep=0pt,minimum size=5pt},label distance=0.1cm,line width=0.25mm,baseline={([yshift=-0.5ex]current bounding box.center)}]
       		 	\node[label={0:$1$}](1) at (2,0) {};
		 		\node[label={0:$2$}](2) at (2,1) {};
				\node[label={0:$3$}](3) at (2,2) {};
				\node[label={0:$4$}](4) at (1,3) {};
				\node[label={0:$5$}](5) at (3,3) {};
				\node[label={180:$6$}](6) at (0,3) {};
				\node[label={0:$8$}](8) at (3,4) {};

				\node[rectangle,draw,inner sep=5pt,line width=1pt,fill=none] at (4){};

				\foreach \from/\to in {1/2,2/3,3/4,3/5,4/6,5/8}
        			\draw[-] (\from.center) -- (\to.center);
    		\end{tikzpicture}
		\end{equation*}
		We are then in \hyperref[TreeAlgoCase1]{Case 1} of \hyperref[TreeAlgoProof]{the proof} of \cref{TreesLFCov}. With notation as in \hyperref[TreeAlgoCase1]{that case}, we then have $l=6$, so $\{4,6\}$ will be added to the linear forest $F$:

		\begin{equation*}
			F=\quad\begin{tikzpicture}[x=0.6cm,y=0.6cm,every node/.style={circle,draw=black,fill=black,inner sep=0pt,minimum size=5pt},label distance=0.1cm,line width=0.25mm,baseline={([yshift=-0.5ex]current bounding box.center)}]
				\node[label={0:$4$}](4) at (1,3) {};
				\node[label={180:$6$}](6) at (0,3) {};
				\node[label={0:$7$}](7) at (1,4) {};
				\node[label={180:$9$}](9) at (0,5) {};
				\node[label={0:$11$}](11) at (2,5) {};

				\node[circle,draw,inner sep=3.5pt,line width=1pt,fill=none] at (7){};

				\foreach \from/\to in {4/6,7/9,7/11}
        			\draw[-] (\from.center) -- (\to.center);
    		\end{tikzpicture}
		\end{equation*}
		We then remove $6$ from $T$. This leaves us with
		\begin{equation*}
		T=\quad\begin{tikzpicture}[x=0.6cm,y=0.6cm,every node/.style={circle,draw=black,fill=black,inner sep=0pt,minimum size=5pt},label distance=0.1cm,line width=0.25mm,baseline={([yshift=-0.5ex]current bounding box.center)}]
  		 	\node[label={0:$1$}](1) at (2,0) {};
	 		\node[label={0:$2$}](2) at (2,1) {};
			\node[label={0:$3$}](3) at (2,2) {};
			\node[label={0:$4$}](4) at (1,3) {};
			\node[label={0:$5$}](5) at (3,3) {};
			\node[label={0:$8$}](8) at (3,4) {};

			\foreach \from/\to in {1/2,2/3,3/4,3/5,5/8}
    			\draw[-] (\from.center) -- (\to.center);
    	\end{tikzpicture}
	\end{equation*}
	for which we must continue calculating an $\LF$-cover.

\pagebreak

	The full execution of \cref{TreeAlgo} for this example is as follows (the light grey vertices and edges are simply placeholders to allow us to visualise each step in the context of the original tree):

	\label{LFCovExample}
	\begin{center}	
	\begin{longtable}{ | >{\centering\arraybackslash} m{1cm} | >{\centering\arraybackslash} m{4.25cm} | >{\centering\arraybackslash} m{4.25cm} |  >{\centering\arraybackslash} m{4.25cm} | }
		\hline
		\emph{Step} & \emph{Current $T$} & \emph{Resulting $F$} & \emph{Remaining $T$}
		\\ \hline
		1
		&
		\adjustbox{margin=5pt}{
			\begin{tikzpicture}[x=0.6cm,y=0.6cm,every node/.style={circle,draw=black,fill=black,inner sep=0pt,minimum size=5pt},label distance=0.1cm,line width=0.25mm]
       		 	\node[label={0:$1$}](1) at (2,0) {};
		 		\node[label={0:$2$}](2) at (2,1) {};
				\node[label={0:$3$}](3) at (2,2) {};
				\node[label={0:$4$}](4) at (1,3) {};
				\node[label={0:$5$}](5) at (3,3) {};
				\node[label={180:$6$}](6) at (0,3) {};
				\node[label={0:$7$}](7) at (1,4) {};
				\node[label={0:$8$}](8) at (3,4) {};
				\node[label={180:$9$}](9) at (0,5) {};
				\node[label={90:$10$}](10) at (1,5) {};
				\node[label={0:$11$}](11) at (2,5) {};

				\node[rectangle,draw,inner sep=5pt,line width=1pt,fill=none] at (7){};

				\foreach \from/\to in {1/2,2/3,3/4,3/5,4/6,4/7,5/8,7/9,7/10,7/11}
        			\draw[-] (\from.center) -- (\to.center);
    		\end{tikzpicture}
		}
		&
		\adjustbox{margin=5pt}{
			\begin{tikzpicture}[x=0.6cm,y=0.6cm,every node/.style={circle,draw=black,fill=black,inner sep=0pt,minimum size=5pt},label distance=0.1cm,line width=0.25mm]
				\node[label={0:$7$}](7) at (1,4) {};
				\node[label={180:$9$}](9) at (0,5) {};
				\node[label={0:$11$}](11) at (2,5) {};

				\node[draw=lightgray,fill=lightgray](1) at (2,0) {};
		 		\node[draw=lightgray,fill=lightgray](2) at (2,1) {};
				\node[draw=lightgray,fill=lightgray](3) at (2,2) {};
				\node[draw=lightgray,fill=lightgray](4) at (1,3) {};
				\node[draw=lightgray,fill=lightgray](5) at (3,3) {};
				\node[draw=lightgray,fill=lightgray](6) at (0,3) {};
				\node[draw=lightgray,fill=lightgray](8) at (3,4) {};
				\node[draw=lightgray,fill=lightgray](10) at (1,5) {};

				\node[circle,draw,inner sep=3.5pt,line width=1pt,fill=none] at (7){};

				\foreach \from/\to in {7/9,7/11}
        			\draw[-] (\from.center) -- (\to.center);

				\begin{scope}[on background layer]
					\foreach \from/\to in {1/2,2/3,3/4,3/5,4/6,4/7,5/8,7/10}
        				\draw[-,lightgray] (\from.center) -- (\to.center);
				\end{scope}
    		\end{tikzpicture}
		}
		&
		\adjustbox{margin=5pt}{
			\begin{tikzpicture}[x=0.6cm,y=0.6cm,every node/.style={circle,draw=black,fill=black,inner sep=0pt,minimum size=5pt},label distance=0.1cm,line width=0.25mm]
       		 	\node[label={0:$1$}](1) at (2,0) {};
		 		\node[label={0:$2$}](2) at (2,1) {};
				\node[label={0:$3$}](3) at (2,2) {};
				\node[label={0:$4$}](4) at (1,3) {};
				\node[label={0:$5$}](5) at (3,3) {};
				\node[label={180:$6$}](6) at (0,3) {};
				\node[label={0:$8$}](8) at (3,4) {};

				\node[draw=lightgray,fill=lightgray](7) at (1,4) {};
				\node[draw=lightgray,fill=lightgray](9) at (0,5) {};
				\node[draw=lightgray,fill=lightgray](10) at (1,5) {};
				\node[draw=lightgray,fill=lightgray](11) at (2,5) {};

				\foreach \from/\to in {1/2,2/3,3/4,3/5,4/6,5/8}
        			\draw[-] (\from.center) -- (\to.center);

				\begin{scope}[on background layer]
					\foreach \from/\to in {4/7,7/9,7/10,7/11}
        				\draw[-,lightgray] (\from.center) -- (\to.center);
				\end{scope}
    		\end{tikzpicture}
		}
		\\ \hline
		2
		&
		\adjustbox{margin=5pt}{
			\begin{tikzpicture}[x=0.6cm,y=0.6cm,every node/.style={circle,draw=black,fill=black,inner sep=0pt,minimum size=5pt},label distance=0.1cm,line width=0.25mm]
       		 	\node[label={0:$1$}](1) at (2,0) {};
		 		\node[label={0:$2$}](2) at (2,1) {};
				\node[label={0:$3$}](3) at (2,2) {};
				\node[label={0:$4$}](4) at (1,3) {};
				\node[label={0:$5$}](5) at (3,3) {};
				\node[label={180:$6$}](6) at (0,3) {};
				\node[label={0:$8$}](8) at (3,4) {};

				\node[draw=lightgray,fill=lightgray](7) at (1,4) {};
				\node[draw=lightgray,fill=lightgray](9) at (0,5) {};
				\node[draw=lightgray,fill=lightgray](10) at (1,5) {};
				\node[draw=lightgray,fill=lightgray](11) at (2,5) {};

				\node[rectangle,draw,inner sep=5pt,line width=1pt,fill=none] at (4){};

				\foreach \from/\to in {1/2,2/3,3/4,3/5,4/6,5/8}
        			\draw[-] (\from.center) -- (\to.center);

				\begin{scope}[on background layer]
					\foreach \from/\to in {4/7,7/9,7/10,7/11}
        				\draw[-,lightgray] (\from.center) -- (\to.center);
				\end{scope}
    		\end{tikzpicture}
		}
		&
		\adjustbox{margin=5pt}{
			\begin{tikzpicture}[x=0.6cm,y=0.6cm,every node/.style={circle,draw=black,fill=black,inner sep=0pt,minimum size=5pt},label distance=0.1cm,line width=0.25mm]
				\node[label={0:$4$}](4) at (1,3) {};
				\node[label={180:$6$}](6) at (0,3) {};
				\node[label={0:$7$}](7) at (1,4) {};
				\node[label={180:$9$}](9) at (0,5) {};
				\node[label={0:$11$}](11) at (2,5) {};

				\node[draw=lightgray,fill=lightgray](1) at (2,0) {};
		 		\node[draw=lightgray,fill=lightgray](2) at (2,1) {};
				\node[draw=lightgray,fill=lightgray](3) at (2,2) {};
				\node[draw=lightgray,fill=lightgray](5) at (3,3) {};
				\node[draw=lightgray,fill=lightgray](8) at (3,4) {};
				\node[draw=lightgray,fill=lightgray](10) at (1,5) {};

				\node[circle,draw,inner sep=3.5pt,line width=1pt,fill=none] at (7){};

				\foreach \from/\to in {4/6,7/9,7/11}
        			\draw[-] (\from.center) -- (\to.center);

				\begin{scope}[on background layer]
					\foreach \from/\to in {1/2,2/3,3/4,3/5,4/7,5/8,7/10}
        				\draw[-,lightgray] (\from.center) -- (\to.center);
				\end{scope}
    		\end{tikzpicture}
		}
		&
		\adjustbox{margin=5pt}{
			\begin{tikzpicture}[x=0.6cm,y=0.6cm,every node/.style={circle,draw=black,fill=black,inner sep=0pt,minimum size=5pt},label distance=0.1cm,line width=0.25mm]
       		 	\node[label={0:$1$}](1) at (2,0) {};
		 		\node[label={0:$2$}](2) at (2,1) {};
				\node[label={0:$3$}](3) at (2,2) {};
				\node[label={0:$4$}](4) at (1,3) {};
				\node[label={0:$5$}](5) at (3,3) {};
				\node[label={0:$8$}](8) at (3,4) {};

				\node[draw=lightgray,fill=lightgray](6) at (0,3) {};
				\node[draw=lightgray,fill=lightgray](7) at (1,4) {};
				\node[draw=lightgray,fill=lightgray](9) at (0,5) {};
				\node[draw=lightgray,fill=lightgray](10) at (1,5) {};
				\node[draw=lightgray,fill=lightgray](11) at (2,5) {};

				\foreach \from/\to in {1/2,2/3,3/4,3/5,5/8}
        			\draw[-] (\from.center) -- (\to.center);

				\begin{scope}[on background layer]
					\foreach \from/\to in {4/6,4/7,7/9,7/10,7/11}
        				\draw[-,lightgray] (\from.center) -- (\to.center);
				\end{scope}
    		\end{tikzpicture}
		}
		\\ \hline
		3
		&
		\adjustbox{margin=5pt}{
			\begin{tikzpicture}[x=0.6cm,y=0.6cm,every node/.style={circle,draw=black,fill=black,inner sep=0pt,minimum size=5pt},label distance=0.1cm,line width=0.25mm]
       		 	\node[label={0:$1$}](1) at (2,0) {};
		 		\node[label={0:$2$}](2) at (2,1) {};
				\node[label={0:$3$}](3) at (2,2) {};
				\node[label={0:$4$}](4) at (1,3) {};
				\node[label={0:$5$}](5) at (3,3) {};
				\node[label={0:$8$}](8) at (3,4) {};

				\node[draw=lightgray,fill=lightgray](6) at (0,3) {};
				\node[draw=lightgray,fill=lightgray](7) at (1,4) {};
				\node[draw=lightgray,fill=lightgray](9) at (0,5) {};
				\node[draw=lightgray,fill=lightgray](10) at (1,5) {};
				\node[draw=lightgray,fill=lightgray](11) at (2,5) {};

				\node[rectangle,draw,inner sep=5pt,line width=1pt,fill=none] at (5){};

				\foreach \from/\to in {1/2,2/3,3/4,3/5,5/8}
        			\draw[-] (\from.center) -- (\to.center);

				\begin{scope}[on background layer]
					\foreach \from/\to in {4/6,4/7,7/9,7/10,7/11}
        				\draw[-,lightgray] (\from.center) -- (\to.center);
				\end{scope}
    		\end{tikzpicture}
		}
		&
		\adjustbox{margin=5pt}{
			\begin{tikzpicture}[x=0.6cm,y=0.6cm,every node/.style={circle,draw=black,fill=black,inner sep=0pt,minimum size=5pt},label distance=0.1cm,line width=0.25mm]
				\node[label={0:$4$}](4) at (1,3) {};
				\node[label={0:$5$}](5) at (3,3) {};
				\node[label={180:$6$}](6) at (0,3) {};
				\node[label={0:$7$}](7) at (1,4) {};
				\node[label={0:$8$}](8) at (3,4) {};
				\node[label={180:$9$}](9) at (0,5) {};
				\node[label={0:$11$}](11) at (2,5) {};

				\node[draw=lightgray,fill=lightgray](1) at (2,0) {};
		 		\node[draw=lightgray,fill=lightgray](2) at (2,1) {};
				\node[draw=lightgray,fill=lightgray](3) at (2,2) {};
				\node[draw=lightgray,fill=lightgray](10) at (1,5) {};

				\node[circle,draw,inner sep=3.5pt,line width=1pt,fill=none] at (7){};

				\foreach \from/\to in {4/6,5/8,7/9,7/11}
        			\draw[-] (\from.center) -- (\to.center);

				\begin{scope}[on background layer]
					\foreach \from/\to in {1/2,2/3,3/4,3/5,4/7,7/10}
        				\draw[-,lightgray] (\from.center) -- (\to.center);
				\end{scope}
    		\end{tikzpicture}
		}
		&
		\adjustbox{margin=5pt}{
			\begin{tikzpicture}[x=0.6cm,y=0.6cm,every node/.style={circle,draw=black,fill=black,inner sep=0pt,minimum size=5pt},label distance=0.1cm,line width=0.25mm]
       		 	\node[label={0:$1$}](1) at (2,0) {};
		 		\node[label={0:$2$}](2) at (2,1) {};
				\node[label={0:$3$}](3) at (2,2) {};
				\node[label={0:$4$}](4) at (1,3) {};
				\node[label={0:$5$}](5) at (3,3) {};

				\node[draw=lightgray,fill=lightgray](6) at (0,3) {};
				\node[draw=lightgray,fill=lightgray](7) at (1,4) {};
				\node[draw=lightgray,fill=lightgray](8) at (3,4) {};
				\node[draw=lightgray,fill=lightgray](9) at (0,5) {};
				\node[draw=lightgray,fill=lightgray](10) at (1,5) {};
				\node[draw=lightgray,fill=lightgray](11) at (2,5) {};

				\foreach \from/\to in {1/2,2/3,3/4,3/5}
        			\draw[-] (\from.center) -- (\to.center);

				\begin{scope}[on background layer]
					\foreach \from/\to in {4/6,4/7,5/8,7/9,7/10,7/11}
        				\draw[-,lightgray] (\from.center) -- (\to.center);
				\end{scope}
    		\end{tikzpicture}
		}
		\\ \hline
		4
		&
		\adjustbox{margin=5pt}{
			\begin{tikzpicture}[x=0.6cm,y=0.6cm,every node/.style={circle,draw=black,fill=black,inner sep=0pt,minimum size=5pt},label distance=0.1cm,line width=0.25mm]
       		 	\node[label={0:$1$}](1) at (2,0) {};
		 		\node[label={0:$2$}](2) at (2,1) {};
				\node[label={0:$3$}](3) at (2,2) {};
				\node[label={0:$4$}](4) at (1,3) {};
				\node[label={0:$5$}](5) at (3,3) {};

				\node[draw=lightgray,fill=lightgray](6) at (0,3) {};
				\node[draw=lightgray,fill=lightgray](7) at (1,4) {};
				\node[draw=lightgray,fill=lightgray](8) at (3,4) {};
				\node[draw=lightgray,fill=lightgray](9) at (0,5) {};
				\node[draw=lightgray,fill=lightgray](10) at (1,5) {};
				\node[draw=lightgray,fill=lightgray](11) at (2,5) {};

				\node[rectangle,draw,inner sep=5pt,line width=1pt,fill=none] at (3){};

				\foreach \from/\to in {1/2,2/3,3/4,3/5}
        			\draw[-] (\from.center) -- (\to.center);

				\begin{scope}[on background layer]
					\foreach \from/\to in {4/6,4/7,5/8,7/9,7/10,7/11}
        				\draw[-,lightgray] (\from.center) -- (\to.center);
				\end{scope}
    		\end{tikzpicture}
		}
		&
		\adjustbox{margin=5pt}{
			\begin{tikzpicture}[x=0.6cm,y=0.6cm,every node/.style={circle,draw=black,fill=black,inner sep=0pt,minimum size=5pt},label distance=0.1cm,line width=0.25mm]
				\node[label={0:$3$}](3) at (2,2) {};
				\node[label={0:$4$}](4) at (1,3) {};
				\node[label={0:$5$}](5) at (3,3) {};
				\node[label={180:$6$}](6) at (0,3) {};
				\node[label={0:$7$}](7) at (1,4) {};
				\node[label={0:$8$}](8) at (3,4) {};
				\node[label={180:$9$}](9) at (0,5) {};
				\node[label={0:$11$}](11) at (2,5) {};

				\node[draw=lightgray,fill=lightgray](1) at (2,0) {};
				\node[draw=lightgray,fill=lightgray](2) at (2,1) {};
				\node[draw=lightgray,fill=lightgray](10) at (1,5) {};

				\node[circle,draw,inner sep=3.5pt,line width=1pt,fill=none] at (3){};
				\node[circle,draw,inner sep=3.5pt,line width=1pt,fill=none] at (7){};

				\foreach \from/\to in {3/4,3/5,4/6,5/8,7/9,7/11}
        			\draw[-] (\from.center) -- (\to.center);

				\begin{scope}[on background layer]
					\foreach \from/\to in {1/2,2/3,4/7,7/10}
        				\draw[-,lightgray] (\from.center) -- (\to.center);
				\end{scope}
    		\end{tikzpicture}
		}
		&
		\adjustbox{margin=5pt}{
			\begin{tikzpicture}[x=0.6cm,y=0.6cm,every node/.style={circle,draw=black,fill=black,inner sep=0pt,minimum size=5pt},label distance=0.1cm,line width=0.25mm]
       		 	\node[label={0:$1$}](1) at (2,0) {};
		 		\node[label={0:$2$}](2) at (2,1) {};

				\node[draw=lightgray,fill=lightgray](3) at (2,2) {};
				\node[draw=lightgray,fill=lightgray](4) at (1,3) {};
				\node[draw=lightgray,fill=lightgray](5) at (3,3) {};
				\node[draw=lightgray,fill=lightgray](6) at (0,3) {};
				\node[draw=lightgray,fill=lightgray](7) at (1,4) {};
				\node[draw=lightgray,fill=lightgray](8) at (3,4) {};
				\node[draw=lightgray,fill=lightgray](9) at (0,5) {};
				\node[draw=lightgray,fill=lightgray](10) at (1,5) {};
				\node[draw=lightgray,fill=lightgray](11) at (2,5) {};

				\foreach \from/\to in {1/2}
        			\draw[-] (\from.center) -- (\to.center);

				\begin{scope}[on background layer]
					\foreach \from/\to in {2/3,3/4,3/5,4/6,4/7,5/8,7/9,7/10,7/11}
        				\draw[-,lightgray] (\from.center) -- (\to.center);
				\end{scope}
    		\end{tikzpicture}
		}
		\\ \hline
		5
		&
		\adjustbox{margin=5pt}{
			\begin{tikzpicture}[x=0.6cm,y=0.6cm,every node/.style={circle,draw=black,fill=black,inner sep=0pt,minimum size=5pt},label distance=0.1cm,line width=0.25mm]
       		 	\node[label={0:$1$}](1) at (2,0) {};
		 		\node[label={0:$2$}](2) at (2,1) {};

				\node[draw=lightgray,fill=lightgray](3) at (2,2) {};
				\node[draw=lightgray,fill=lightgray](4) at (1,3) {};
				\node[draw=lightgray,fill=lightgray](5) at (3,3) {};
				\node[draw=lightgray,fill=lightgray](6) at (0,3) {};
				\node[draw=lightgray,fill=lightgray](7) at (1,4) {};
				\node[draw=lightgray,fill=lightgray](8) at (3,4) {};
				\node[draw=lightgray,fill=lightgray](9) at (0,5) {};
				\node[draw=lightgray,fill=lightgray](10) at (1,5) {};
				\node[draw=lightgray,fill=lightgray](11) at (2,5) {};

				\foreach \from/\to in {1/2}
        			\draw[-] (\from.center) -- (\to.center);

				\begin{scope}[on background layer]
					\foreach \from/\to in {2/3,3/4,3/5,4/6,4/7,5/8,7/9,7/10,7/11}
        				\draw[-,lightgray] (\from.center) -- (\to.center);
				\end{scope}
    		\end{tikzpicture}
		}
		&
		\adjustbox{margin=5pt}{
			\begin{tikzpicture}[x=0.6cm,y=0.6cm,every node/.style={circle,draw=black,fill=black,inner sep=0pt,minimum size=5pt},label distance=0.1cm,line width=0.25mm]
				\node[label={0:$1$}](1) at (2,0) {};
				\node[label={0:$2$}](2) at (2,1) {};
				\node[label={0:$3$}](3) at (2,2) {};
				\node[label={0:$4$}](4) at (1,3) {};
				\node[label={0:$5$}](5) at (3,3) {};
				\node[label={180:$6$}](6) at (0,3) {};
				\node[label={0:$7$}](7) at (1,4) {};
				\node[label={0:$8$}](8) at (3,4) {};
				\node[label={180:$9$}](9) at (0,5) {};
				\node[label={0:$11$}](11) at (2,5) {};

				\node[draw=lightgray,fill=lightgray](10) at (1,5) {};

				\node[circle,draw,inner sep=3.5pt,line width=1pt,fill=none] at (3){};
				\node[circle,draw,inner sep=3.5pt,line width=1pt,fill=none] at (7){};

				\foreach \from/\to in {1/2,3/4,3/5,4/6,5/8,7/9,7/11}
        			\draw[-] (\from.center) -- (\to.center);

				\begin{scope}[on background layer]
					\foreach \from/\to in {2/3,4/7,7/10}
        				\draw[-,lightgray] (\from.center) -- (\to.center);
				\end{scope}
    		\end{tikzpicture}
		}
		&
		\adjustbox{margin=5pt}{
			\begin{tikzpicture}[x=0.6cm,y=0.6cm,every node/.style={circle,draw=black,fill=black,inner sep=0pt,minimum size=5pt},label distance=0.1cm,line width=0.25mm]
				\node[draw=lightgray,fill=lightgray](1) at (2,0) {};
		 		\node[draw=lightgray,fill=lightgray](2) at (2,1) {};
				\node[draw=lightgray,fill=lightgray](3) at (2,2) {};
				\node[draw=lightgray,fill=lightgray](4) at (1,3) {};
				\node[draw=lightgray,fill=lightgray](5) at (3,3) {};
				\node[draw=lightgray,fill=lightgray](6) at (0,3) {};
				\node[draw=lightgray,fill=lightgray](7) at (1,4) {};
				\node[draw=lightgray,fill=lightgray](8) at (3,4) {};
				\node[draw=lightgray,fill=lightgray](9) at (0,5) {};
				\node[draw=lightgray,fill=lightgray](10) at (1,5) {};
				\node[draw=lightgray,fill=lightgray](11) at (2,5) {};

				\begin{scope}[on background layer]
					\foreach \from/\to in {1/2,2/3,3/4,3/5,4/6,4/7,5/8,7/9,7/10,7/11}
        				\draw[-,lightgray] (\from.center) -- (\to.center);
				\end{scope}
			\end{tikzpicture}
		}
		\\ \hline
	\end{longtable}
\end{center}

	\vspace{-2\baselineskip}
	Then we have calculated $F$ and $S$ as claimed, and so by \cref{MainLFThm} we have
	\begin{equation*}
		\grade_R(\mathcal{J}(T))=\smallAbs{E(F)}=7
	\end{equation*}
\end{example}

\pagebreak

\subsection{Some Graphs Without \textsf{LF}-covers}\label{NonLFCovSubsection}

\begin{example}
	For $n=6$, we saw directly in \cref{NetNoLFCovExample} that
	\begin{equation*}
		G=\quad\begin{tikzpicture}[x=0.75cm,y=0.75cm,every node/.style={circle,draw=black,fill=black,inner sep=0pt,minimum size=5pt},label distance=0.15cm,line width=0.25mm,baseline={([yshift=-0.5ex]current bounding box.center)}]
       		\node(1) at (0,0) {};
    		\node(2) at ([shift=(-120:1)]1) {};
        	\node(3) at ([shift=(0:1)]2) {};
			\node(4) at ([shift=(90:1)]1) {};
			\node(5) at ([shift=(-150:1)]2) {};
        	\node(6) at ([shift=(-30:1)]3) {};

	  	  	\foreach \from/\to in {1/2,1/3,1/4,2/3,2/5,3/6}
				\draw[-] (\from.center) -- (\to.center);
    	\end{tikzpicture}
	\end{equation*}
	is not $\LF$-coverable. It can be checked using \citeMacaulay{} that $\grade_R(\mathcal{J}(G))=5$, but $\LF(G)=4$. This example is minimal in the sense that all graphs with $6$ vertices or fewer are $\LF$-coverable other than this one.
\end{example}

Being bipartite is also not enough to ensure $\LF$-coverability:

\begin{example}
	For $n=10$ and
	\begin{align*}
		G&=\quad\begin{tikzpicture}[x=0.75cm,y=0.75cm,every node/.style={circle,draw=black,fill=black,inner sep=0pt,minimum size=5pt},label distance=0.15cm,line width=0.25mm,baseline={([yshift=-0.5ex]current bounding box.center)}]
       		\node(1) at (0,0) {};
			\node(2) at (1,0) {};
			\node(3) at (2,0) {};
			\node(4) at (3,0) {};
			\node(5) at (4,0) {};
			\node(6) at (0,1) {};
			\node(7) at (1,1) {};
			\node(8) at (2,1) {};
			\node(9) at (3,1) {};
			\node(10) at (4,1) {};
	    	\foreach \from/\to in {1/6,1/7,1/9,2/7,3/7,3/8,3/10,4/8,4/9,5/9,5/10}
        		\draw[-] (\from.center) -- (\to.center);
    	\end{tikzpicture}\\[0.3cm]
		&=\quad\begin{tikzpicture}[x=0.75cm,y=0.4cm,every node/.style={circle,draw=black,fill=black,inner sep=0pt,minimum size=5pt},label distance=0.15cm,line width=0.25mm,baseline={([yshift=-0.5ex]current bounding box.center)}]
       		\node(1) at (1,3) {};
			\node(2) at (0,0) {};
			\node(3) at (2,0) {};
			\node(4) at (2,2) {};
			\node(5) at (3,3) {};
			\node(6) at (0,3) {};
			\node(7) at (1,0) {};
			\node(8) at (2,1) {};
			\node(9) at (2,3) {};
			\node(10) at (3,0) {};
	    	\foreach \from/\to in {1/6,1/7,1/9,2/7,3/7,3/8,3/10,4/8,4/9,5/9,5/10}
        		\draw[-] (\from.center) -- (\to.center);
    	\end{tikzpicture}
	\end{align*}
	it can be checked using \citeMacaulay{} that $\grade_R(\mathcal{J}(G))=9$, but $\LF(G)=8$. This example is minimal in the sense that all bipartite graphs with fewer than $10$ vertices are $\LF$-coverable, and there is only one other bipartite graph on $10$ vertices which is not $\LF$-coverable:
	\begin{align*}
		G&=\quad\begin{tikzpicture}[x=0.75cm,y=0.75cm,every node/.style={circle,draw=black,fill=black,inner sep=0pt,minimum size=5pt},label distance=0.15cm,line width=0.25mm,baseline={([yshift=-0.5ex]current bounding box.center)}]
       		\node(1) at (0,0) {};
			\node(2) at (1,0) {};
			\node(3) at (2,0) {};
			\node(4) at (3,0) {};
			\node(5) at (4,0) {};
			\node(6) at (0,1) {};
			\node(7) at (1,1) {};
			\node(8) at (2,1) {};
			\node(9) at (3,1) {};
			\node(10) at (4,1) {};
	    	\foreach \from/\to in {1/6,1/7,1/9,2/7,3/7,3/8,3/9,3/10,4/8,4/9,5/9,5/10}
        		\draw[-] (\from.center) -- (\to.center);
    	\end{tikzpicture}\\[0.3cm]
		&=\quad\begin{tikzpicture}[x=0.375cm,y=0.4cm,every node/.style={circle,draw=black,fill=black,inner sep=0pt,minimum size=5pt},label distance=0.15cm,line width=0.25mm,baseline={([yshift=-0.5ex]current bounding box.center)}]
       		\node(1) at (2,3) {};
			\node(2) at (0,0) {};
			\node(3) at (4,0) {};
			\node(4) at (5,2) {};
			\node(5) at (6,3) {};
			\node(6) at (0,3) {};
			\node(7) at (2,0) {};
			\node(8) at (5,1) {};
			\node(9) at (4,3) {};
			\node(10) at (6,0) {};
	    	\foreach \from/\to in {1/6,1/7,1/9,2/7,3/7,3/8,3/9,3/10,4/8,4/9,5/9,5/10}
        		\draw[-] (\from.center) -- (\to.center);
    	\end{tikzpicture}
	\end{align*}
	which also has $\grade_R(\mathcal{J}(G))=9$ and $\LF(G)=8$, however this contains an additional edge, and there are no other bipartite examples with $11$ edges or fewer.
\end{example}

In all of the examples we have seen so far, $\grade_R(\mathcal{J}(G))$ and $\LF(G)$ differ by at most $1$. However this is not necessarily the case:

\begin{example}
	For $n=11$ and
	\begin{equation*}
		G=\quad\begin{tikzpicture}[x=0.75cm,y=0.75cm,every node/.style={circle,draw=black,fill=black,inner sep=0pt,minimum size=5pt},label distance=0.15cm,line width=0.25mm,baseline={([yshift=-0.5ex]current bounding box.center)}]
       		\node(1) at (0,0) {};
			\node(2) at ([shift=(180:1)]1) {};
			\node(3) at ([shift=(0:1)]1) {};
			\node(4) at ([shift=(150:1)]2) {};
			\node(5) at ([shift=(-150:1)]2) {};
			\node(6) at ([shift=(30:1)]3) {};
			\node(7) at ([shift=(-30:1)]3) {};
			\node(8) at ([shift=(120:1)]4) {};
			\node(9) at ([shift=(-120:1)]5) {};
			\node(10) at ([shift=(60:1)]6) {};
			\node(11) at ([shift=(-60:1)]7) {};

	 		\foreach \from/\to in {1/2,1/3,2/4,2/5,3/6,3/7,4/5,4/8,5/9,6/7,6/10,7/11}
        		\draw[-] (\from.center) -- (\to.center);
    	\end{tikzpicture}
	\end{equation*}
	it can be checked using \citeMacaulay{} that $\grade_R(\mathcal{J}(G))=10$, but $\LF(G)=8$. This example is minimal in the sense that there are no other such examples (where $\grade_R(\mathcal{J}(G))$ and $\LF(G)$ differ by more than $1$) with $12$ edges or fewer. There is however a single such example with $10$ vertices or fewer, given by
	\begin{equation*}
		G=\quad\begin{tikzpicture}[x=0.75cm,y=0.75cm,every node/.style={circle,draw=black,fill=black,inner sep=0pt,minimum size=5pt},label distance=0.15cm,line width=0.25mm,baseline={([yshift=-0.5ex]current bounding box.center)}]
       		\node(1) at (0,0) {};
			\node(2) at ([shift=(-36:1)]1) {};
			\node(3) at ([shift=(-108:1)]2) {};
			\node(4) at ([shift=(180:1)]3) {};
			\node(5) at ([shift=(108:1)]4) {};

			\node(6) at ([shift=(90:1)]1) {};
			\node(7) at ([shift=(18:1)]2) {};
			\node(8) at ([shift=(-54:1)]3) {};
			\node(9) at ([shift=(-126:1)]4) {};
			\node(10) at ([shift=(162:1)]5) {};

	 		\foreach \from/\to in {1/2,1/3,1/4,1/5,1/6,2/3,2/4,2/5,2/7,3/4,3/5,3/8,4/5,4/9,5/10}
        		\draw[-] (\from.center) -- (\to.center);
    	\end{tikzpicture}
	\end{equation*}
	which has $\grade_R(\mathcal{J}(G))=9$ and $\LF(G)=7$ for $n=10$.
\end{example}

\pagebreak

\section{Calculating Explicit Roots}

\cref{RootLemma} suggests that we may be able to explicitly calculate roots of $H_{\mathcal{J}(G)}^d(R)$ when a graph $G$ on $n$ vertices has an $\LF$-cover with $d$ edges. In fact, using \hyperref[NSIsom]{the Nagel-Schenzel Isomorphism}, we could calculate roots of any local cohomology module of the binomial edge ideal of any graph if we could find a $\mathcal{J}(G)$-filter regular sequence of sufficient length.

However, in practice, proving that a sequence of elements is $\mathcal{J}(G)$-filter regular turns out to be quite difficult. Even in the case of $\LF$-coverable graphs with covers with $d$ edges, explicitly calculating roots of $H_{\mathcal{J}(G)}^d(R)$ is not easy.

We perform this calculation here for Hamiltonian graphs. We will first do this for the complete graph $K_n$, and then extend the result to all Hamiltonian graphs.

\begin{tcolorbox}
	We now again assume that $k$ is of prime characteristic $p>0$, and that $n\geq3$.

	Furthermore, we define an $\mathbb{N}^n$-grading on $R$ by setting
	\begin{equation*}
		\deg(x_i)=\deg(y_i)=e_i
	\end{equation*}
	where $e_i$ is the $n$-tuple with $0$ everywhere except for $1$ at the $i$th position.
\end{tcolorbox}

Let $G=K_n$, and set $\mathfrak{g}=\mathcal{J}(G)$. We have that $(P_n,\varnothing)$ is an $\LF$-cover of $K_n$ by \cref{TraceableLFCov} since $K_n$ is traceable, and we set $\mathfrak{a}=\mathcal{J}(P_n)$. We may assume that
\begin{equation*}
	E(P_n)=\{\{1,2\},\ldots,\{n-1,n\}\}
\end{equation*}
By \cref{RootLemma}, we can obtain an explicit root of $H_\mathfrak{g}^{n-1}(R)$ by calculating $(\mathfrak{a}:\mathfrak{g})$.

\begin{notation}
	For $0\leq l\leq n-2$, set
	\begin{equation*}
		z_l\defeq x_2\cdots x_{n-1-l}y_{n-l}\cdots y_{n-1}
	\end{equation*}
\end{notation}
Here $l$ should be thought of as the number of ``$y$''s in this expression.

For example, when $n=5$ we have
\begin{align*}
	z_0&=x_2x_3x_4\\
	z_1&=x_2x_3y_4\\
	z_2&=x_2y_3y_4\\
	z_3&=y_2y_3y_4
\end{align*}
We aim to prove the following:

\begin{theorem}\label{ColonTheorem}
	Let
	\begin{equation*}
		\mathfrak{b}=\mathfrak{a}+(z_0,\ldots,z_{n-2})
	\end{equation*}
	Then $(\mathfrak{a}:\mathfrak{g})=\mathfrak{b}$, and so $\mathfrak{b}/\mathfrak{a}$ is a root of $H_\mathfrak{g}^{n-1}(R)$.
\end{theorem}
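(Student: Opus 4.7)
The plan is to establish the two inclusions $\mathfrak{b} \subseteq (\mathfrak{a}:\mathfrak{g})$ and $(\mathfrak{a}:\mathfrak{g}) \subseteq \mathfrak{b}$ separately; the assertion that $\mathfrak{b}/\mathfrak{a}$ is a root of $H_\mathfrak{g}^{n-1}(R)$ then follows immediately from \cref{RootLemma}, since $P_n$ is a maximal linear forest in $K_n$ with exactly $n-1$ edges.

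For the easy inclusion $\mathfrak{b} \subseteq (\mathfrak{a}:\mathfrak{g})$, I first reduce via the primary decomposition of $\mathfrak{a}$ to the reformulation $(\mathfrak{a}:\mathfrak{g}) = (\mathfrak{a}:\delta_{1,n}) = \bigcap_{\varnothing \ne S \in \mathcal{C}(P_n)} P_S(P_n)$. This uses the fact that, for $S \in \mathcal{C}(P_n)$, $\delta_{1,n}$ lies in $P_S(P_n)$ precisely when $1$ and $n$ lie in the same component of $P_n \setminus S$, which forces $S = \varnothing$. It then suffices to check $z_l \in P_S(P_n)$ for every nonempty $S$: for any $s \in S \subseteq \{2, \ldots, n-1\}$, either $s \le n-1-l$ (so that $x_s$ divides $z_l$) or $s \ge n-l$ (so that $y_s$ divides $z_l$), and in either case $z_l \in (x_s, y_s) \subseteq P_S(P_n)$.

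For the harder inclusion $(\mathfrak{a}:\mathfrak{g}) \subseteq \mathfrak{b}$, my strategy is to show that $\delta_{1,n}$ is a nonzerodivisor on $R/\mathfrak{b}$; granted this, any $f \in (\mathfrak{a}:\delta_{1,n})$ satisfies $f\delta_{1,n} \in \mathfrak{a} \subseteq \mathfrak{b}$, forcing $f \in \mathfrak{b}$. To approach this, I would first rewrite $\mathfrak{b} = \mathfrak{a} + I$ where $I = \prod_{i=2}^{n-1}(x_i, y_i) = \bigcap_{i=2}^{n-1}(x_i, y_i)$ (the product equals the intersection since the factors involve disjoint variables); moreover, every monomial generator $v_2 v_3 \cdots v_{n-1}$ of $I$ with $v_i \in \{x_i, y_i\}$ can be reduced modulo $\mathfrak{a}$ to some $z_l$ by repeatedly applying $x_{i+1} y_i \equiv x_i y_{i+1} \pmod{\mathfrak{a}}$, which confirms $\mathfrak{a} + I = \mathfrak{b}$. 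A standard argument then identifies the minimal primes of $\mathfrak{b}$: any prime $P \supseteq \mathfrak{b}$ contains either a nonempty $P_S(P_n)$ directly (from containing $\mathfrak{a}$) or else $\mathfrak{g} + (x_i, y_i)$ (from $P \supseteq \mathfrak{g}$ together with $(x_i, y_i) \subseteq P$ by primality applied to $I$), and the latter ideal strictly contains $P_{\{i\}}(P_n)$; hence $\text{Min}(\mathfrak{b}) = \{P_S(P_n) : \varnothing \ne S \in \mathcal{C}(P_n)\}$, and none of these contain $\delta_{1,n}$.

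The main obstacle is ruling out embedded associated primes of $\mathfrak{b}$ that might contain $\delta_{1,n}$. My plan is to prove $\mathfrak{b}$ is radical, equivalently $\mathfrak{b} = \bigcap_{\varnothing \ne S \in \mathcal{C}(P_n)} P_S(P_n)$. One approach is to show $R/\mathfrak{b}$ is Cohen-Macaulay of dimension $n+1$, matching $\dim R/\mathfrak{a}$, so it is automatically unmixed; this seems plausible since $\mathfrak{a}$ is already a complete intersection by \cref{LinearForestRS}, and the extra generators $z_l$ of $\mathfrak{b}/\mathfrak{a}$ all share a single multidegree $e_2 + \cdots + e_{n-1}$, which suggests a linkage or Koszul-style analysis. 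A more hands-on alternative is induction on $n$: modding out by $(x_i, y_i)$ for an interior $i$ splits $P_n$ into $P_{i-1} \sqcup P_{n-i}$, and an inductive description of $\bigcap P_S(P_n)$ in terms of the smaller intersections on each piece, combined with a Mayer-Vietoris style gluing, should yield the claimed equality.
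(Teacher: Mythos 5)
Your treatment of the inclusion $\mathfrak{b}\subseteq(\mathfrak{a}:\mathfrak{g})$ is correct, and is in fact a genuinely different (and slicker) route than the paper's: where the paper proves $z_l^{i,j}\delta_{i,j}\in\mathfrak{a}$ by an explicit induction (\cref{zProp}), you deduce everything from the primary decomposition of $\mathfrak{a}$ together with the identity $(\mathfrak{a}:\mathfrak{g})=(\mathfrak{a}:\delta_{1,n})=\bigcap_{\varnothing\neq S\in\mathcal{C}(P_n)}P_S(P_n)$, which is exactly the content of the paper's \cref{HamiltonianProp} and \cref{ColonLemma}. Your observations that $\mathfrak{b}=\mathfrak{a}+\prod_{i=2}^{n-1}(x_i,y_i)$ (by sorting monomials modulo $\mathfrak{a}$, the same mechanism as \cref{GradingProp}) and that the minimal primes of $\mathfrak{b}$ are precisely the $P_S(P_n)$ with $\varnothing\neq S\in\mathcal{C}(P_n)$ are also correct.

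The gap is in the reverse inclusion, which is the real content of the theorem. Knowing the minimal primes of $\mathfrak{b}$ does not make $\delta_{1,n}$ a nonzerodivisor on $R/\mathfrak{b}$: you must rule out embedded associated primes, and you acknowledge but never do this. Your first remedy, ``prove $\mathfrak{b}$ is radical, equivalently $\mathfrak{b}=\bigcap_{\varnothing\neq S}P_S(P_n)$,'' is (given your first inclusion) literally a restatement of the equality $\mathfrak{b}=(\mathfrak{a}:\mathfrak{g})$ to be proved, so nothing is gained unless radicality is established by independent means. Your second remedy, Cohen--Macaulayness of $R/\mathfrak{b}$ via linkage, has a real obstruction as stated: linkage of $\mathfrak{g}$ through the complete intersection $\mathfrak{a}$ shows that the colon ideal $(\mathfrak{a}:\mathfrak{g})$ is Cohen--Macaulay, but it does not identify its generators, whereas your argument needs Cohen--Macaulayness (or at least unmixedness) of the explicitly presented ideal $\mathfrak{a}+(z_0,\ldots,z_{n-2})$ --- and showing these two ideals coincide is precisely the difficulty. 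Neither that analysis nor the sketched induction with ``Mayer--Vietoris style gluing'' is carried out, and the hedged language (``seems plausible,'' ``should yield'') marks this as a plan rather than a proof. For comparison, the paper closes this direction elementarily: by \cref{ColonLemma}, $(\mathfrak{a}:\delta_{i-1,i+1})=(x_i,y_i)+\mathfrak{a}$ for each interior $i$, so a homogeneous $c\in(\mathfrak{a}:\mathfrak{g})$ can be successively rewritten modulo $\mathfrak{a}$ so that every term is divisible by $x_i$ or $y_i$ for each $2\leq i\leq n-1$, hence by a monomial of multidegree $(0,1,\ldots,1,0)$, and all such elements lie in $\mathfrak{b}$ by \cref{GradingProp}. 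Supplying either that kind of direct argument, or an actual proof that $R/\mathfrak{b}$ is unmixed, is what your write-up still needs.
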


To show that $\mathfrak{b}\subseteq(\mathfrak{a}:\mathfrak{g})$, we must prove that, for any $0\leq l\leq n-2$ and $1\leq i<j\leq n$, we have $z_l\delta_{i,j}\in\mathfrak{a}$.

We will do this by proving a slightly stronger statement. 

\begin{notation}
	For any $1\leq i<j\leq n$ and $0\leq l\leq j-i-1$, set
	\begin{equation*}
		z^{i,j}_l\defeq x_{i+1}\cdots x_{j-1-l}y_{j-l}\cdots y_{j-1}
	\end{equation*}
	with $z_l^{i,i+1}=1$.
\end{notation}

These should be thought of as truncated versions of the $z_l$, with all indices lying strictly between $i$ and $j$, but still $l$ terms involving ``$y$'' appearing, so $z_l=z_l^{1,n}$.

For example, keeping $n=5$ we have
\begin{align*}
	z_0^{1,4}&=x_2x_3\\
	z_1^{1,4}&=x_2y_3\\
	z_2^{1,4}&=y_2y_3
\end{align*}

\begin{proposition}\label{zProp}
	For any $1\leq i<j\leq n$ and $0\leq l\leq j-i-1$, we have $z^{i,j}_l\delta_{i,j}\in\mathfrak{a}$.
\end{proposition}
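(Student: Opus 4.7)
The plan is to prove the statement by induction on $j-i$. The base case $j-i=1$ is immediate: the only admissible $l$ is $0$, and $z_0^{i,i+1}\delta_{i,i+1}=\delta_{i,i+1}\in\mathfrak{a}$ since $\delta_{i,i+1}$ is one of the generators of $\mathfrak{a}=\mathcal{J}(P_n)$.

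The key tool for the inductive step is the pair of ``Plücker-type'' identities
\begin{equation*}
	x_k\delta_{i,j}=x_i\delta_{k,j}+x_j\delta_{i,k},\qquad y_k\delta_{i,j}=y_i\delta_{k,j}+y_j\delta_{i,k},
\end{equation*}
which hold for any indices and are verified by a one-line direct expansion. Specialising to $k=i+1$ yields
\begin{equation*}
	x_{i+1}\delta_{i,j}=x_i\delta_{i+1,j}+x_j\delta_{i,i+1},\qquad y_{i+1}\delta_{i,j}=y_i\delta_{i+1,j}+y_j\delta_{i,i+1}.
\end{equation*}
Since $\delta_{i,i+1}\in\mathfrak{a}$, these let me trade a $\delta_{i,j}$ with a large index gap for a $\delta_{i+1,j}$ with a strictly smaller gap, modulo $\mathfrak{a}$.

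For the inductive step, assume $j-i\geq 2$ and split on whether the leftmost factor of $z_l^{i,j}$ is an $x$ or a $y$. If $l\leq j-i-2$ then $x_{i+1}$ genuinely appears in $z_l^{i,j}$, and directly from the definition one checks $z_l^{i,j}=x_{i+1}\cdot z_l^{i+1,j}$. Multiplying the first identity above by $z_l^{i+1,j}$ gives
\begin{equation*}
	z_l^{i,j}\delta_{i,j}=x_i\bigl(z_l^{i+1,j}\delta_{i+1,j}\bigr)+x_j z_l^{i+1,j}\delta_{i,i+1},
\end{equation*}
whose second summand lies in $\mathfrak{a}$ and whose first summand lies in $\mathfrak{a}$ by the inductive hypothesis applied to the pair $(i+1,j)$ with parameter $l$ (the inequality $l\leq j-(i+1)-1$ is exactly $l\leq j-i-2$). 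If instead $l=j-i-1$, then $z_l^{i,j}=y_{i+1}\cdots y_{j-1}=y_{i+1}\cdot z_{l-1}^{i+1,j}$, and multiplying the second identity by $z_{l-1}^{i+1,j}$ gives the analogous expression
\begin{equation*}
	z_l^{i,j}\delta_{i,j}=y_i\bigl(z_{l-1}^{i+1,j}\delta_{i+1,j}\bigr)+y_j z_{l-1}^{i+1,j}\delta_{i,i+1},
\end{equation*}
with both summands again in $\mathfrak{a}$ (the first by the inductive hypothesis applied to $(i+1,j)$ with parameter $l-1$, which satisfies $l-1=j-(i+1)-1$).

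There is no real obstacle here: the algebraic content is the two-term identity $x_k\delta_{i,j}=x_i\delta_{k,j}+x_j\delta_{i,k}$ (and its $y$-analogue), and the only thing needing care is the indexing bookkeeping, namely matching the factorisations $z_l^{i,j}=x_{i+1}\cdot z_l^{i+1,j}$ (when $l\leq j-i-2$) and $z_{j-i-1}^{i,j}=y_{i+1}\cdot z_{j-i-2}^{i+1,j}$ against the definition of $z_l^{i',j'}$, and checking that the parameters fed into the inductive hypothesis remain in the allowed range $0\leq l'\leq j'-i'-1$.
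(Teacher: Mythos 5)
Your proof is correct and follows essentially the same route as the paper: induction on $j-i$, with the inductive step driven by the identity $x_k\delta_{i,j}=x_i\delta_{k,j}+x_j\delta_{i,k}$ (and its $y$-analogue), which is exactly the add-and-subtract manipulation the paper performs. The only cosmetic difference is the case split: you always peel the factor at index $i+1$ (splitting on whether it is $x_{i+1}$ or, when $l=j-i-1$, $y_{i+1}$), whereas the paper peels $x_{i+1}$ when $l=0$ and $y_{j-1}$ (reducing $j$ instead of $i$) when $l>0$; both bookkeepings are sound.
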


\begin{proof}
	The proof will be by induction on $j-i$.

	If $j-i=1,$ then $\delta_{i,j}=\delta_{i,i+1}\in\mathfrak{a}$ since $\mathfrak{a}=\mathcal{J}(P_n)$, and so $z\delta_{i,j}$ belongs to $\mathfrak{a}$ for any $z\in R$.

	Now suppose that $j-i\geq 2$ and that the result holds for all smaller such differences.

	If $l=0$ then $x_{i+1}\mid z_l^{i,j}$, and so
	\begin{align*}
		z_l^{i,j}\delta_{i,j}&=z_l^{i,j}(x_iy_j-x_jy_i)\\
		&=z_l^{i,j}x_iy_j-z_l^{i+1,j}x_ix_jy_{i+1}+z_l^{i+1,j}x_ix_jy_{i+1}-z_l^{i,j}x_jy_i\\
		&=x_iz_l^{i+1,j}(x_{i+1}y_j-x_jy_{i+1})+x_jz_l^{i+1,j}(x_iy_{i+1}-x_{i+1}y_i)\\
		&=x_i(z_l^{i+1,j}\delta_{i+1,j})+(x_jz_l^{i+1,j})\delta_{i,i+1}
	\end{align*}
	We have $z_l^{i+1,j}\delta_{i+1,j}\in\mathfrak{a}$ by the inductive hypothesis, and $\delta_{i,i+1}\in\mathfrak{a}$ since $\mathfrak{a}=\mathcal{J}(P_n)$, so $z_l^{i,j}\delta_{i,j}\in\mathfrak{a}$.

	Similarly, if $l\neq0$ then $y_{j-1}\mid z_l^{i,j}$, and so
	\begin{align*}
		z_l^{i,j}\delta_{i,j}&=z_l^{i,j}(x_iy_j-x_jy_i)\\
		&=z_l^{i,j}x_iy_j-z_{l-1}^{i,j-1}x_{j-1}y_iy_j+z_{l-1}^{i,j-1}x_{j-1}y_iy_j-z_l^{i,j}x_jy_i\\
		&=y_jz_{l-1}^{i,j-1}(x_iy_{j-1}-x_{j-1}y_i)+y_iz_{l-1}^{i,j-1}(x_{j-1}y_j-x_jy_{j-1})\\
		&=y_j(z_{l-1}^{i,j-1}\delta_{i,j-1})+(y_iz_{l-1}^{i,j-1})\delta_{j-1,j}
	\end{align*}
	We have $z_{l-1}^{i,j-1}\delta_{i,j-1}\in\mathfrak{a}$ by the inductive hypothesis, and $\delta_{j-1,j}\in\mathfrak{a}$ since $\mathfrak{a}=\mathcal{J}(P_n)$, so $z_l^{i,j}\delta_{i,j}\in\mathfrak{a}$.

	Then, in either case, we have $z_l^{i,j}\delta_{i,j}\in\mathfrak{a}$ as desired.
\end{proof}

\begin{corollary}
	We have $\mathfrak{b}\subseteq(\mathfrak{a}:\mathfrak{g})$.
\end{corollary}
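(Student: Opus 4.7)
The plan is to reduce $\mathfrak{b}\subseteq(\mathfrak{a}:\mathfrak{g})$ to a direct application of \cref{zProp}. Since $\mathfrak{a}$ is an ideal we have $\mathfrak{a}\subseteq(\mathfrak{a}:\mathfrak{g})$ automatically, and since $\mathfrak{g}$ is generated by the $\delta_{i,j}$ with $1\leq i<j\leq n$, it is enough to show that $z_l\delta_{i,j}\in\mathfrak{a}$ for every $0\leq l\leq n-2$ and every such pair $(i,j)$.

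The key observation will be that the ``global'' monomial $z_l=z_l^{1,n}$ always contains a ``local'' monomial $z_{l'}^{i,j}$ as a factor, for an appropriate $l'$ determined by $l$, $i$, $j$, and $n$. Concretely, I would set $l'=\max\{0,\min\{l+j-n,j-i-1\}\}$ and verify that the variables of $z_l$ whose indices lie in $\{i+1,\ldots,j-1\}$ are exactly those of $z_{l'}^{i,j}$; this yields a factorisation $z_l=w\cdot z_{l'}^{i,j}$ for some complementary monomial $w\in R$. Granting this, the conclusion is immediate: $z_l\delta_{i,j}=w\cdot(z_{l'}^{i,j}\delta_{i,j})\in\mathfrak{a}$ by \cref{zProp}.

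The only nontrivial step is verifying the factorisation, and even this is bookkeeping rather than content. The natural approach is to split into three cases depending on where the $x$-to-$y$ transition of $z_l$ (at index $n-l$) sits relative to the interval $[i+1,j-1]$: if this interval lies entirely in the $x$-block $[2,n-1-l]$ then $l'=0$; if it lies entirely in the $y$-block $[n-l,n-1]$ then $l'=j-i-1$; and if it straddles the transition then $l'=l+j-n$. In each case the divisibility is a direct comparison of the explicit forms of $z_l$ and $z_{l'}^{i,j}$, so beyond this elementary case analysis I do not anticipate any real obstacle.
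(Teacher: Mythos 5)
Your proposal is correct and follows essentially the same route as the paper: reduce to showing $z_l\delta_{i,j}\in\mathfrak{a}$ for each generator $\delta_{i,j}$ of $\mathfrak{g}$, and observe that $z_{l'}^{i,j}\mid z_l$ (the paper takes $l'$ to be the number of ``$y$''s in $z_l$ with index strictly between $i$ and $j$, which is exactly what your explicit formula $l'=\max\{0,\min\{l+j-n,\,j-i-1\}\}$ computes), then apply \cref{zProp}. Your three-case bookkeeping for the divisibility checks out, so there is nothing to add.
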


\begin{proof}
	This follows immediately from \cref{zProp}, since for any $1\leq i<j\leq n$ and $0\leq l\leq n-2$, we have that $z_{l'}^{i,j}\mid z_l$ for some $0\leq l'\leq n-2$, $l'$ is simply the number of occurrences of ``$y$''s in $z_l$ with indices strictly between $i$ and $j$.
\end{proof}

We will now show the reverse inclusion, for which we need a few preparatory results:

\begin{lemma}\label{ColonLemma}
	For any $2\leq i\leq n-1$, we have
	\begin{equation*}
		(\mathfrak{a}:\delta_{i-1,i+1})=(x_i,y_i)+\mathfrak{a}
	\end{equation*}
\end{lemma}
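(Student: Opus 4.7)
The plan is to prove the two inclusions separately.

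For $(x_i,y_i)+\mathfrak{a}\subseteq(\mathfrak{a}:\delta_{i-1,i+1})$, since $\mathfrak{a}\cdot\delta_{i-1,i+1}\subseteq\mathfrak{a}$ trivially, it suffices to check that $x_i\delta_{i-1,i+1}$ and $y_i\delta_{i-1,i+1}$ lie in $\mathfrak{a}$. This follows from the identities
\begin{align*}
x_i\delta_{i-1,i+1}&=x_{i-1}\delta_{i,i+1}+x_{i+1}\delta_{i-1,i},\\
y_i\delta_{i-1,i+1}&=y_{i-1}\delta_{i,i+1}+y_{i+1}\delta_{i-1,i},
\end{align*}
both of which can be verified by a direct expansion, and whose right-hand sides lie in $\mathfrak{a}=\mathcal{J}(P_n)$ since $\{i-1,i\},\{i,i+1\}\in E(P_n)$.

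For the reverse inclusion, the plan is to show that $\delta_{i-1,i+1}$ is a nonzerodivisor on $R/((x_i,y_i)+\mathfrak{a})$; granted this, any $f$ with $f\delta_{i-1,i+1}\in\mathfrak{a}\subseteq(x_i,y_i)+\mathfrak{a}$ must itself lie in $(x_i,y_i)+\mathfrak{a}$, as required. To verify this nonzerodivisor claim, observe that modulo $(x_i,y_i)$ the binomials $\delta_{i-1,i}$ and $\delta_{i,i+1}$ vanish, so under the isomorphism $R/(x_i,y_i)\cong k[x_j,y_j:j\neq i]$ the ideal $(x_i,y_i)+\mathfrak{a}$ is identified with the binomial edge ideal of the graph $P'$ given by the disjoint union of the paths on $\{1,\ldots,i-1\}$ and $\{i+1,\ldots,n\}$. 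Adjoining the edge $\{i-1,i+1\}$ to $P'$ produces a single path $P''$ on the $n-1$ vertices $1,\ldots,i-1,i+1,\ldots,n$, which is in particular a linear forest. By \cref{LinearForestRS}, the generators of $\mathcal{J}(P'')$ form a regular sequence in $k[x_j,y_j:j\neq i]$; since these are homogeneous of positive degree, this sequence can be reordered to place $\delta_{i-1,i+1}$ last, making $\delta_{i-1,i+1}$ a nonzerodivisor modulo $\mathcal{J}(P')$, which is exactly what is needed.

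The key observation driving the reverse inclusion is that appending the single edge $\{i-1,i+1\}$ to the disjoint union $P_{i-1}\sqcup P_{n-i}$ produces a single path, which is precisely what permits \cref{LinearForestRS} to be brought to bear; everything else in the argument is routine.
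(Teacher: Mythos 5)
Your proof is correct, but it takes a genuinely different route from the paper. The paper obtains the equality in one stroke from the combinatorial primary decomposition of $\mathfrak{a}=\mathcal{J}(P_n)$ (\cref{BEIPD}): since each $P_S(P_n)$ is prime, $(\mathfrak{a}:\delta_{i-1,i+1})$ is the intersection of those $P_S(P_n)$ not containing $\delta_{i-1,i+1}$, namely those with $i\in S$ and $i-1,i+1\notin S$, and that intersection is recognised as $(x_i,y_i)+\mathcal{J}(P_{\{1,\ldots,i-1\}})+\mathcal{J}(P_{\{i+1,\ldots,n\}})=(x_i,y_i)+\mathfrak{a}$. You instead split the equality into two inclusions: the easy one via the explicit identities $x_i\delta_{i-1,i+1}=x_{i-1}\delta_{i,i+1}+x_{i+1}\delta_{i-1,i}$ and its $y$-analogue (both of which check out), and the hard one by showing $\delta_{i-1,i+1}$ is a nonzerodivisor on $R/((x_i,y_i)+\mathfrak{a})\cong k[x_j,y_j:j\neq i]/\mathcal{J}(P')$, using that appending $\{i-1,i+1\}$ to the broken path $P'$ yields a single path, so \cref{LinearForestRS} together with permutability of homogeneous regular sequences applies. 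This is a valid argument; the only point worth smoothing is that \cref{LinearForestRS} is stated for regular sequences in $R$ itself, so you should either note that regularity passes between $k[x_j,y_j:j\neq i]$ and $R=k[x_j,y_j:j\neq i][x_i,y_i]$ (faithful flatness, or simply that $x_i,y_i$ do not appear in the generators), or run the nonzerodivisor argument directly in $R$ via the isomorphism you already wrote down. Comparing the two: the paper's proof is shorter given the machinery already in place and exhibits the minimal primes of $(x_i,y_i)+\mathfrak{a}$ explicitly (which is in the spirit of the surrounding arguments), while yours is more hands-on, avoiding any appeal to \cref{BEIPD} at this step — though \cref{LinearForestRS} itself was proved using it — and isolates the structural reason for the reverse inclusion, namely that the edge $\{i-1,i+1\}$ completes the broken path to a linear forest.
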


\begin{proof}
	We have
	\begin{align*}
		(\mathfrak{a}:\delta_{i-1,i+1})&=\bigcap_{\mathclap{S\in\mathcal{C}(P_n)}}\,(P_S(P_n):\delta_{i-1,i+1})=\bigcap_{\mathclap{\substack{S\in\mathcal{C}(P_n)\\\delta_{i-1,i+1}\notin P_S(P_n)}}}\,P_S(P_n)=\bigcap_{\mathclap{\substack{S\in\mathcal{C}(P_n)\\i\in S\\i-1,i+1\notin S}}}\,P_S(P_n)\\
		&=(x_i,y_i)+\mathcal{J}(P_{\{1,\ldots,i-1\}})+\mathcal{J}(P_{\{i+1,\ldots,n\}})=(x_i,y_i)+\mathfrak{a}
	\end{align*}
	since the $P_S(P_n)$ are prime.

	Here $P_{\{v_1,\ldots,v_t\}}$ denotes the graph on $v_1,\ldots,v_t$ with edges $\{v_1,v_2\},\ldots,\{v_{t-1},v_t\}$, that is, the path on $v_1,\ldots,v_t$.
\end{proof}

\pagebreak

\begin{proposition}\label{GradingProp}
	Every homogeneous element of $R$ with degree $(0,1,\ldots,1,0)$ belongs to $\mathfrak{b}$.
\end{proposition}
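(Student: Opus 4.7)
The plan is to reduce to the monomial case: since every homogeneous element of a given multi-degree is a $k$-linear combination of monomials of that same multi-degree, it suffices to show that each monomial of multi-degree $(0,1,\ldots,1,0)$ lies in $\mathfrak{b}$. Because $\deg(x_i)=\deg(y_i)=e_i$, such a monomial must have the form $m=w_2 w_3 \cdots w_{n-1}$ with $w_i\in\{x_i,y_i\}$ for each $2\leq i\leq n-1$.

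To each such $m$ I would associate its inversion number
\begin{equation*}
	\mathrm{inv}(m)\defeq\#\{(i,j):2\leq i<j\leq n-1,\ w_i=y_i,\ w_j=x_j\}
\end{equation*}
and proceed by induction on this quantity. For the base case $\mathrm{inv}(m)=0$, the word $(w_2,\ldots,w_{n-1})$ has all of its $x$-variables appearing before all of its $y$-variables; these monomials are precisely $z_0,\ldots,z_{n-2}$, and so lie in $\mathfrak{b}$ by construction.

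For the inductive step, if $\mathrm{inv}(m)>0$ then $(w_2,\ldots,w_{n-1})$ is not non-decreasing in the order $x<y$, so it must contain a consecutive inversion: positions $i,i+1$ (with $2\leq i\leq n-2$) where $w_i=y_i$ and $w_{i+1}=x_{i+1}$. Letting $m'$ be the monomial obtained from $m$ by replacing the factor $y_i x_{i+1}$ with $x_i y_{i+1}$, a direct calculation gives
\begin{equation*}
	m-m'=-\delta_{i,i+1}\cdot\prod_{\substack{2\leq k\leq n-1\\ k\neq i,i+1}}w_k,
\end{equation*}
which lies in $\mathfrak{a}\subseteq\mathfrak{b}$ since $\delta_{i,i+1}\in\mathfrak{a}=\mathcal{J}(P_n)$. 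A short bookkeeping check (tallying how the four classes of pairs $(k,i),(i,j),(k,i+1),(i+1,j)$ change) shows $\mathrm{inv}(m')=\mathrm{inv}(m)-1$, so by induction $m'\in\mathfrak{b}$, and therefore $m\in\mathfrak{b}$.

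The key observation is simply that consecutive $y_i x_{i+1}$ patterns can be swapped modulo $\mathfrak{a}$; the rest is combinatorial bookkeeping, and I do not foresee any serious obstacle. Note in particular that \cref{ColonLemma} is not needed here — the relations used are the generators of $\mathfrak{a}$ themselves.
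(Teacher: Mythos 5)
Your proposal is correct and is essentially the paper's argument: the paper's proof is exactly the observation that $x_iy_{i+1}\equiv x_{i+1}y_i$ modulo $\mathfrak{a}$ (i.e.\ via $\delta_{i,i+1}\in\mathcal{J}(P_n)$) lets one swap adjacent ``$x$''/``$y$'' factors to move any such monomial to some $z_l$ while staying in $\mathfrak{b}$. Your reduction to monomials and induction on the inversion number simply makes that swapping argument precise, so no gap and no genuinely different route.
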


\begin{proof}
	To see this, simply note that $x_iy_{i+1}=x_{i+1}y_i$ modulo $\mathfrak{a}$ for every $1\leq i\leq n-1$, and so we may ``swap'' adjacent terms of ``$x$''s and ``$y$''s in our $z_l$ whilst staying within $\mathfrak{b}$.
\end{proof}

\begin{corollary}
	We have $(\mathfrak{a}:\mathfrak{g})\subseteq\mathfrak{b}$.
\end{corollary}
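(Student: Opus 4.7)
The plan is to leverage the $\mathbb{N}^n$-grading together with the two lemmas we just proved. Since $\mathfrak{a}$ and $\mathfrak{g}$ are both $\mathbb{N}^n$-graded, so is the colon ideal $(\mathfrak{a}:\mathfrak{g})$, and $\mathfrak{b}=\mathfrak{a}+(z_0,\ldots,z_{n-2})$ is graded as well. Hence it suffices to take an arbitrary multi-homogeneous $f\in(\mathfrak{a}:\mathfrak{g})$ of some multi-degree $\mathbf{d}=(d_1,\ldots,d_n)$ and show $f\in\mathfrak{b}$.

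Next I would apply \cref{ColonLemma}: because $\delta_{i-1,i+1}\in\mathfrak{g}$ for each $2\leq i\leq n-1$, we get the chain of inclusions $(\mathfrak{a}:\mathfrak{g})\subseteq(\mathfrak{a}:\delta_{i-1,i+1})=(x_i,y_i)+\mathfrak{a}$. Write $f=g_i+h_i$ with $g_i\in(x_i,y_i)$ and $h_i\in\mathfrak{a}$; since both summand ideals are multi-homogeneous, I may choose $g_i,h_i$ to be multi-homogeneous of degree $\mathbf{d}$ by passing to graded components. Suppose first that $d_j=0$ for some $j\in\{2,\ldots,n-1\}$. Then the degree-$\mathbf{d}$ piece of $(x_j,y_j)$ vanishes, since any nonzero homogeneous element of $(x_j,y_j)$ must have $d_j\geq 1$. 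This forces $g_j=0$, whence $f=h_j\in\mathfrak{a}\subseteq\mathfrak{b}$ and we are done.

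Otherwise $d_i\geq 1$ for every $i\in\{2,\ldots,n-1\}$, i.e.\ $\mathbf{d}\geq(0,1,\ldots,1,0)$ componentwise. Every monomial $M$ appearing in $f$ can then be factored as $M=M'\cdot M''$, where $M''$ pulls out exactly one of $x_i$ or $y_i$ for each $i=2,\ldots,n-1$ and therefore has multi-degree precisely $(0,1,\ldots,1,0)$. By \cref{GradingProp}, $M''\in\mathfrak{b}$, so $M\in\mathfrak{b}$, and summing over the finitely many monomials of $f$ gives $f\in\mathfrak{b}$, as required.

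There is no serious obstacle here: the key realisation is that working multi-homogeneously turns the colon condition against each $\delta_{i-1,i+1}$ into a clean componentwise lower bound on $\mathbf{d}$, after which \cref{GradingProp} dispatches the result. If anything, the only point needing a little care is the homogeneous lifting of the decomposition $f=g_i+h_i$, which is immediate from the fact that both $(x_i,y_i)$ and $\mathfrak{a}$ are $\mathbb{N}^n$-graded.
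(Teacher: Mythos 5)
Your proof is correct, and it is organised differently from the paper's. The paper also reduces to a multi-homogeneous $c\in(\mathfrak{a}:\mathfrak{g})$ and uses the same two ingredients (\cref{ColonLemma} and \cref{GradingProp}), but it then runs an \emph{iterated} decomposition: $c=f_2+a_2$ with $f_2\in(x_2,y_2)$, then $f_2=f_3+a_3$ with $f_3\in(x_3,y_3)$, and so on through $i=2,\ldots,n-1$, finally observing that every term of $f_{n-1}$ is divisible by one of $x_i,y_i$ for each middle index and invoking \cref{GradingProp}. You instead split on the multidegree $\mathbf{d}$: if some middle entry $d_j$ vanishes, a single application of \cref{ColonLemma} at that one index $j$, together with the vanishing of the degree-$\mathbf{d}$ piece of $(x_j,y_j)$, already forces $f\in\mathfrak{a}\subseteq\mathfrak{b}$; if all middle entries are positive, the colon hypothesis is not needed at all, since every monomial of such a degree factors through a monomial of degree $(0,1,\ldots,1,0)$ and \cref{GradingProp} applies directly. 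This buys two things: you use only one of the elements $\delta_{j-1,j+1}$ rather than all of them, and you avoid the bookkeeping the iterative argument needs to ensure that divisibility by the earlier $x_i$ or $y_i$ survives the later decompositions (a point the paper passes over quickly and which is most cleanly justified exactly by the multi-graded splitting you make explicit). The only hypotheses you rely on beyond the paper's — that $\mathfrak{a}$, $\mathfrak{g}$, $(x_j,y_j)$ and hence $(\mathfrak{a}:\mathfrak{g})$ are $\mathbb{N}^n$-graded — are immediate, since all generators are multi-homogeneous for the grading fixed at the start of the section.
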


\begin{proof}
	Take any $c\in(\mathfrak{a}:\mathfrak{g})$. Since $\mathfrak{a}$ and $\mathfrak{g}$ are homogeneous we have that $(\mathfrak{a}:\mathfrak{g})$ is homogeneous also, and so we may assume that $c$ is homogeneous. Because $c\in(\mathfrak{a}:\mathfrak{g})$, we have in particular that $c\delta_{i-1,i+1}\in\mathfrak{a}$ for every $2\leq i\leq n-1$ since $\delta_{i-1,i+1}\in\mathfrak{g}$.

	In \cref{ColonLemma} we saw that
	\begin{equation*}
		(\mathfrak{a}:\delta_{i-1,i+1})=(x_i,y_i)+\mathfrak{a}
	\end{equation*}
	and so we can write $c=f_2+a_2$ for some $f_2\in(x_2,y_2)$ and $a_2\in\mathfrak{a}$. Note that $f_2=c-a_2\in(\mathfrak{a}:\mathfrak{g})$ also, so we can write $f_2=f_3+a_3$ for some $f_3\in(x_3,y_3)$ and $a_3\in\mathfrak{a}$. Continuing in this way, we arrive at
	\begin{equation*}
 	   c=f_{n-1}+(a_2+\cdots+a_{n-1})
	\end{equation*}
	where either $x_i\mid f_{n-1}$ or $y_i\mid f_{n-1}$ for each $2\leq i\leq n-1$. This means that each term in $f_{n-1}$ is divisible by a homogeneous element of $R$ of degree $(0,1,\ldots,1,0)$, so $f_{n-1}\in\mathfrak{b}$ by \cref{GradingProp}. Then $c\in\mathfrak{b}$ also, and we are done.
\end{proof}

This concludes the proof of \cref{ColonTheorem} for $G=K_n$. We will now extend this result.

\begin{tcolorbox}
	We now allow $G$ to be any Hamiltonian graph on $n$ vertices, otherwise our notation is as before. Since $G$ is Hamiltonian it contains $C_n$, and we may assume that
	\vspace{-0.5\baselineskip}
	\begin{equation*}
		E(C_n)=\{\{1,2\},\ldots,\{n-1,n\},\{1,n\}\}
	\end{equation*}
\end{tcolorbox}

Since $K_n$ is Hamiltonian, to prove \cref{ColonTheorem} for all Hamiltonian graphs, it will suffice to show that $(\mathfrak{a}:\mathfrak{g})$ is independent of the choice of Hamiltonian graph:

\begin{proposition}\label{HamiltonianProp}
	We have $(\mathfrak{a}:\mathfrak{g})=(\mathfrak{a}:\delta_{1,n})$.
\end{proposition}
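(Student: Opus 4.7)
The plan is to prove the two inclusions separately. The direction $(\mathfrak{a}:\mathfrak{g})\subseteq(\mathfrak{a}:\delta_{1,n})$ is immediate from $\{1,n\}\in E(C_n)\subseteq E(G)$, which places $\delta_{1,n}\in\mathfrak{g}$. For the reverse inclusion I will use the standard identity $(\mathfrak{a}:\mathfrak{g})=\bigcap_{\{i,j\}\in E(G)}(\mathfrak{a}:\delta_{i,j})$, so it suffices to establish $(\mathfrak{a}:\delta_{1,n})\subseteq(\mathfrak{a}:\delta_{i,j})$ for every edge $\{i,j\}$ of $G$. For the path edges $\{i,i+1\}\in E(P_n)$ this is automatic, since $\delta_{i,i+1}\in\mathfrak{a}$ gives $(\mathfrak{a}:\delta_{i,i+1})=R$. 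Thus the real task is to handle the \emph{chords} $\{i,j\}$ with $j-i\geq 2$.

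For these chords I will exploit the primary decomposition $\mathfrak{a}=\bigcap_{S\in\mathcal{C}(P_n)}P_S(P_n)$ provided by \cref{BEIPD}, together with the primality of each $P_S(P_n)$, to rewrite
\begin{equation*}
    (\mathfrak{a}:\delta_{i,j})=\bigcap_{\substack{S\in\mathcal{C}(P_n)\\ \delta_{i,j}\notin P_S(P_n)}}P_S(P_n)
\end{equation*}
and similarly for $\delta_{1,n}$. It then suffices to prove the set-theoretic inclusion $\{S\in\mathcal{C}(P_n):\delta_{i,j}\notin P_S(P_n)\}\subseteq\{S\in\mathcal{C}(P_n):\delta_{1,n}\notin P_S(P_n)\}$, since enlarging the index set of the intersection shrinks the ideal.

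The combinatorial heart of the argument is a description of $\mathcal{C}(P_n)$ itself. I will first observe that the leaves $1$ and $n$ of $P_n$ cannot belong to any $S\in\mathcal{C}(P_n)$: if one such leaf $v$ lay in $S$, then reinstating $v$ either yields a new singleton component (when its unique $P_n$-neighbour is in $S$) or merges $v$ into the component of that neighbour (when it is not), in both cases violating the strict inequality $c(S\setminus\{v\})<c(S)$. With this in hand, $\delta_{i,j}\notin P_S(P_n)$ forces $i,j\notin S$ together with the existence of some $k\in S$ strictly between $i$ and $j$ (so that $i$ and $j$ sit in distinct components of $P_n\setminus S$); in particular $S\neq\varnothing$. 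Since $1,n\notin S$ and $S$ contains some interior vertex, the vertices $1$ and $n$ then lie in distinct maximal subpaths of $P_n\setminus S$, giving $\delta_{1,n}\notin P_S(P_n)$, as required.

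I expect the main (and only really substantive) obstacle to be the careful combinatorial bookkeeping in the last paragraph: correctly characterising membership $\delta_{i,j}\in P_S(P_n)$ in terms of $S$ and the components of $P_n\setminus S$, and ruling out the leaves from $\mathcal{C}(P_n)$. Once these checks are in place, the conclusion $(\mathfrak{a}:\delta_{1,n})\subseteq\bigcap_{\{i,j\}\in E(G)}(\mathfrak{a}:\delta_{i,j})=(\mathfrak{a}:\mathfrak{g})$ is immediate, completing the proof.
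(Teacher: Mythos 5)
Your proposal is correct and follows essentially the same route as the paper: both reduce to $(\mathfrak{a}:\mathfrak{g})=\bigcap_{\{i,j\}\in E(G)}(\mathfrak{a}:\delta_{i,j})$ and use the primary decomposition of $\mathcal{J}(P_n)$ with the primality of the $P_S(P_n)$ to compare index sets of the intersections. The only difference is cosmetic: the paper observes that $\delta_{i,j}\in P_\varnothing(P_n)$ while $\delta_{1,n}\in P_S(P_n)$ only for $S=\varnothing$, whereas you verify the same index-set inclusion by spelling out the combinatorics (leaves of $P_n$ never lie in $S\in\mathcal{C}(P_n)$, and any interior vertex of $S$ separates $1$ from $n$), which is just an expanded justification of the fact the paper treats as clear.
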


\begin{proof}	
	We will first show that $(\mathfrak{a}:\delta_{1,n})\subseteq(\mathfrak{a}:\delta_{i,j})$ for any $1\leq i<j\leq n$.

	We know
	\begin{equation*}
		(\mathfrak{a}:\delta_{i,j})=\myCap_{S\in\mathcal{C}(P_n)}(P_S(P_n):\delta_{i,j})=\myCap_{\substack{S\in\mathcal{C}(P_n)\\\delta_{i,j}\notin P_S(P_n)}}P_S(P_n)
	\end{equation*}
	since the $P_S(P_n)$ are prime, and $\delta_{1,n}\in P_S(P_n)$ if and only if $S=\varnothing$. Clearly $\delta_{i,j}\in P_\varnothing(P_n)$ for any $1\leq i<j\leq n$, and so the desired containment holds.

	Then we have
	\begin{equation*}
		(\mathfrak{a}:\mathfrak{g})=\mathfrak{a}:\hspace{-0.1cm}\left(\hspace{0.5cm}\mySum_{\{i,j\}\in E(G)}(\delta_{i,j})\hspace{-0.05cm}\right)=\;\myCap_{\{i,j\}\in E(G)}(\mathfrak{a}:\delta_{i,j})=(\mathfrak{a}:\delta_{1,n})
	\end{equation*}
	with the last equality following since $G$ contains $C_n$ because it is Hamiltonian and so $\{1,n\}\in E(G)$.
\end{proof}

This concludes the proof of \cref{ColonTheorem} for all Hamiltonian graphs.\qed

As an example application of \cref{ColonTheorem}, we show the following:

\begin{proposition}
	We have
	\begin{equation*}
		\Ass_R(H_\mathfrak{g}^{n-1}(R))=\{\mathcal{J}(K_n)\}
	\end{equation*}
\end{proposition}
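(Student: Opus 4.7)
The plan is to use \cref{ColonTheorem} together with \cref{RootAssEqual} to reduce the problem to computing $\Ass_R(\mathfrak{b}/\mathfrak{a})$, then identify this via the combinatorial description of $\LF$-covers. Since $\mathfrak{a}$ is radical with minimal primary decomposition $\mathfrak{a} = \myCap_{S \in \mathcal{C}(P_n)} P_S(P_n)$ supplied by \cref{BEIPD}, and since (as in the proof of \cref{LocCohoAssPrimeThm}, using that the $P_S(P_n)$ are prime) we have
\begin{equation*}
    \mathfrak{b} = (\mathfrak{a}:\mathfrak{g}) = \myCap_{\substack{S \in \mathcal{C}(P_n)\\ \mathfrak{g} \nsubseteq P_S(P_n)}} P_S(P_n),
\end{equation*}
applying \cref{AssLemma} yields
\begin{equation*}
    \Ass_R(\mathfrak{b}/\mathfrak{a}) = \{P_S(P_n) : S \in \mathcal{C}(P_n),\, \mathfrak{g} \subseteq P_S(P_n)\},
\end{equation*}
which by \cref{LFCovAssEquiv} is precisely the set of $P_S(P_n)$ for which $(P_n, S)$ is an $\LF$-cover of $G$. (This is of course the conclusion of \cref{LocCohoAssPrimeThm} specialised to $F = P_n$, which could equally well be cited directly.)

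The remaining step is to determine which $S \subseteq V(P_n)$ yield $\LF$-covers of $G$. Since $G$ is Hamiltonian we have assumed $\{1, n\} \in E(G)$, so this edge must be covered by any such $S$. The vertices $1$ and $n$ are leaves of $P_n$ and therefore cannot belong to $S$, hence they must lie in the same connected component of $P_n \setminus S$. But the unique path in $P_n$ from $1$ to $n$ traverses every vertex, so $S$ can contain no internal vertex either, forcing $S = \varnothing$. Conversely, $(P_n, \varnothing)$ is an $\LF$-cover of $G$ by \cref{TraceableLFCov}.

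We therefore conclude $\Ass_R(H_\mathfrak{g}^{n-1}(R)) = \{P_\varnothing(P_n)\}$, and since $P_n$ is connected \cref{PSGDef} identifies $P_\varnothing(P_n) = \mathcal{J}(\tilde{P}_n) = \mathcal{J}(K_n)$, as required. All of the genuine labour lies in the prior proof of \cref{ColonTheorem}; here the only real content is a clean combinatorial observation about Hamiltonicity together with a direct unravelling of the notation.
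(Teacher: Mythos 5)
Your proposal is correct, but the second half of the argument follows a genuinely different route from the paper. The paper also begins by reducing to $\Ass_R(\mathfrak{b}/\mathfrak{a})$ via \cref{RootAssEqual}, but it then identifies this set by a short, direct ideal-theoretic argument: any associated prime has the form $\mathfrak{p}=(\mathfrak{a}:b)$ with $b\in\mathfrak{b}\setminus\mathfrak{a}$; since $\mathfrak{b}=(\mathfrak{a}:\mathcal{J}(K_n))$ one gets $\mathcal{J}(K_n)\subseteq\mathfrak{p}$, and the reverse inclusion follows from $\mathfrak{p}b\subseteq\mathfrak{a}\subseteq\mathcal{J}(K_n)$ together with the primeness of $\mathcal{J}(K_n)=P_\varnothing(K_n)$ and the radicalness of $\mathfrak{a}$ (which rules out $b\in\mathcal{J}(K_n)$). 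You instead rerun the primary-decomposition machinery --- in effect invoking \cref{LocCohoAssPrimeThm} with $F=P_n$ --- and then pin down the $\LF$-covers of a Hamiltonian graph combinatorially, observing that the edge $\{1,n\}$ forces $S=\varnothing$ because $1$ and $n$ are leaves of $P_n$ and lie in a common component of $P_n\setminus S$ only when $S=\varnothing$; this observation is correct, as is the identification $P_\varnothing(P_n)=\mathcal{J}(K_n)$. Each approach has its merits: the paper's is more elementary at this stage (only primeness and radicalness are used) and is genuinely an application of the explicit description in \cref{ColonTheorem}, whereas yours gets non-emptiness of the set of associated primes for free from the $\LF$-cover $(P_n,\varnothing)$ (the paper leaves the fact that $\mathfrak{b}/\mathfrak{a}\neq0$ implicit). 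One small correction to your closing remark: on your route the genuine labour is \cref{RootLemma} and \cref{LocCohoAssPrimeThm}, not \cref{ColonTheorem} --- the explicit generators $z_0,\ldots,z_{n-2}$ play no role in your argument, so your proof in fact shows the proposition is independent of that computation.
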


\begin{proof}
	By \cref{ColonTheorem} and \cref{HamiltonianProp}, we have that $\mathfrak{b}/\mathfrak{a}$ is a root of $H_\mathfrak{g}^{n-1}(R)$, and so
	\begin{equation*}
		\Ass_R(H_\mathfrak{g}^{n-1}(R))=\Ass_R(\mathfrak{b}/\mathfrak{a})
	\end{equation*}
	by \cref{RootAssEqual}.

	Let $\mathfrak{p}$ be any associated prime of $\mathfrak{b}/\mathfrak{a}$, so $\mathfrak{p}=\Ann_R(b+\mathfrak{a})$ for some non-zero $b+\mathfrak{a}\in\mathfrak{b}/\mathfrak{a}$. This is equivalent to saying that $\mathfrak{p}=(\mathfrak{a}:b)$.

	Since $b\in\mathfrak{b}=(\mathfrak{a}:\mathcal{J}(K_n))$ we have $b\mathcal{J}(K_n)\subseteq\mathfrak{a}$, so certainly $\mathcal{J}(K_n)\subseteq\mathfrak{p}$. We will now show the converse.

	By definition, we have $\mathfrak{p}b\subseteq\mathfrak{a}\subseteq\mathcal{J}(K_n)$. Now, $\mathcal{J}(K_n)=P_\varnothing(K_n)$ is prime, and so either $\mathfrak{p}\subseteq\mathcal{J}(K_n)$ as desired, or $b\in\mathcal{J}(K_n)$. We know that $b\mathcal{J}(K_n)\subseteq\mathfrak{a}$, and so in this case we have $b^2\in\mathfrak{a}$. But $\mathfrak{a}=\mathcal{J}(P_n)$ is radical, so $b\in\mathfrak{a}$. Then $b+\mathfrak{a}=0+\mathfrak{a}$, which contradicts that $b+\mathfrak{a}$ is non-zero, and so we are done.
\end{proof}

\section*{Acknowledgements}

Thanks to Dr Moty Katzman for suggesting the use of the Nagel-Schenzel Isomorphism to study the local cohomology modules of binomial edge ideals, and for many helpful discussions. Thanks also to Dr Josep \`{A}lvarez Montaner and Dr Paul Johnson for their helpful feedback and suggestions regarding these results. 
\vfill
\nocite{M2}
\printbibliography

@article{asadollahiResultsAssociatedPrimes2003,
  title = {Some Results on Associated Primes of Local Cohomology Modules},
  author = {Asadollahi, J. and Schenzel, P.},
  year = {2003},
  journal = {Japanese Journal of Mathematics. New Series},
  volume = {29},
  number = {2},
  pages = {285--296}
}

@article{bosePatternMatchingPermutations1998,
  title = {Pattern Matching for Permutations},
  author = {Bose, Prosenjit and Buss, Jonathan F. and Lubiw, Anna},
  year = {1998},
  month = mar,
  journal = {Information Processing Letters},
  volume = {65},
  number = {5},
  pages = {277--283}
}

@book{brodmannLocalCohomologyAlgebraic2013,
  title = {Local {{Cohomology}}: {{An Algebraic Introduction}} with {{Geometric Applications}}},
  author = {Brodmann, M. P. and Sharp, R. Y.},
  year = {2013},
  series = {Cambridge {{Studies}} in {{Advanced Mathematics}}},
  edition = {Second Edition},
  volume = {136},
  publisher = {{Cambridge University Press}}
}

@book{brunsCohenMacaulayRings2005,
  title = {Cohen-{{Macaulay}} Rings},
  author = {Bruns, Winfried and Herzog, J{\"u}rgen},
  year = {2005},
  series = {Cambridge {{Studies}} in {{Advanced Mathematics}}},
  edition = {Revised Edition},
  volume = {39},
  publisher = {{Cambridge University Press}}
}

@book{brunsDeterminantalRings1988,
  title = {Determinantal {{Rings}}},
  author = {Bruns, Winfried and Vetter, Udo},
  year = {1988},
  series = {Lecture {{Notes}} in {{Mathematics}}},
  number = {1327},
  publisher = {{Springer}}
}

@book{eisenbudCommutativeAlgebraView1995,
  title = {Commutative Algebra: With a View Toward Algebraic Geometry},
  author = {Eisenbud, David},
  year = {1995},
  series = {Graduate Texts in Mathematics},
  volume = {150},
  publisher = {Springer}
}

@article{herzogBinomialEdgeIdeals2010,
  title = {Binomial Edge Ideals and Conditional Independence Statements},
  author = {Herzog, J{\"u}rgen and Hibi, Takayuki and Hreinsd{\'o}ttir, Freyja and Kahle, Thomas and Rauh, Johannes},
  year = {2010},
  month = sep,
  journal = {Advances in Applied Mathematics},
  volume = {45},
  number = {3},
  pages = {317--333}
}

@book{herzogBinomialIdeals2018,
  title = {Binomial {{Ideals}}},
  author = {Herzog, J{\"u}rgen and Hibi, Takayuki and Ohsugi, Hidefumi},
  year = {2018},
  series = {Graduate {{Texts}} in {{Mathematics}}},
  volume = {279},
  publisher = {{Springer International Publishing}}
}

@article{herzogGradedIdealsKonig2021,
  title = {Graded Ideals of {{K\"onig}} Type},
  author = {Herzog, J{\"u}rgen and Hibi, Takayuki and Moradi, Somayeh},
  year = {2021},
  month = oct,
  journal = {Transactions of the American Mathematical Society},
  volume = {375},
  number = {1},
  pages = {301--323}
}

@article{hunekeBassNumbersLocal1993,
  title = {Bass {{Numbers}} of {{Local Cohomology Modules}}},
  author = {Huneke, Craig L. and Sharp, Rodney Y.},
  year = {1993},
  month = feb,
  journal = {Transactions of the American Mathematical Society},
  volume = {339},
  number = {2},
  pages = {765--779}
}

@article{huongNotesFrobeniusTest2019,
  title = {Notes on the {{Frobenius Test Exponents}}},
  author = {Huong, Duong Thi and Quy, Pham Hung},
  year = {2019},
  month = jul,
  journal = {Communications in Algebra},
  volume = {47},
  number = {7},
  pages = {2702--2710}
}

@article{jing-hoQuasithresholdGraphs1996,
  title = {Quasi-Threshold Graphs},
  author = {{Jing-Ho}, Yan and {Jer-Jeong}, Chen and Chang, Gerard J.},
  year = {1996},
  month = aug,
  journal = {Discrete Applied Mathematics},
  volume = {69},
  number = {3},
  pages = {247--255}
}

@article{kunzCharacterizationsRegularLocal1969,
  title = {Characterizations of {{Regular Local Rings}} of {{Characteristic}} p},
  author = {Kunz, Ernst},
  year = {1969},
  month = jul,
  journal = {American Journal of Mathematics},
  volume = {91},
  number = {3},
  pages = {772}
}

@misc{laclairInvariantsBinomialEdge2023,
  title = {Invariants of {{Binomial Edge Ideals}} via {{Linear Programs}}},
  author = {LaClair, Adam},
  year = {2023},
  month = jun,
  number = {arXiv:2304.13299},
  eprint = {2304.13299},
  primaryclass = {math},
  publisher = {{arXiv}},
  archiveprefix = {arxiv}
}

@article{lyubeznikFmodulesApplicationsLocal1997,
  title = {{$F$-Modules}: Applications to Local Cohomology and {$D$-modules} in Characteristic {$p>0$}},
  author = {Lyubeznik, Gennady},
  year = {1997},
  month = oct,
  journal = {Journal f\"ur die reine und angewandte Mathematik (Crelles Journal)},
  number = {491},
  pages = {65--130}
}

@misc{M2,
  author = {Grayson, Daniel R. and Stillman, Michael E.},
  title = {\code{Macaulay2}, a software system for research in algebraic geometry},
  howpublished = {Available at \url{http://www2.macaulay2.com}}
}

@article{masciaKrullDimensionRegularity2020,
  title = {Krull Dimension and Regularity of Binomial Edge Ideals of Block Graphs},
  author = {Mascia, Carla and Rinaldo, Giancarlo},
  year = {2020},
  month = jul,
  journal = {Journal of Algebra and Its Applications},
  volume = {19},
  number = {07},
  pages = {2050133}
}

@book{matsumuraCommutativeRingTheory1986,
  title = {Commutative Ring Theory},
  author = {Matsumura, Hideyuki},
  translator = {Reid, Miles},
  year = {1986},
  series = {Cambridge Studies in Advanced Mathematics},
  volume = {8},
  publisher = {Cambridge University Press}
}

@incollection{nagelCohomologicalAnnihilatorsCastelnuovoMumford1994,
  title = {Cohomological Annihilators and {{Castelnuovo-Mumford}} Regularity},
  booktitle = {Commutative {{Algebra}}: {{Syzygies}}, {{Multiplicities}}, and {{Birational Algebra}}},
  author = {Nagel, Uwe and Schenzel, Peter},
  editor = {Heinzer, William J. and Huneke, Craig L. and Sally, Judith D.},
  year = {1994},
  series = {Contemporary {{Mathematics}}},
  volume = {159},
  pages = {307--328},
  publisher = {{American Mathematical Society}}
}

@article{ocarrollGeneralizedFractionsSharp1983,
  title = {On the {{Generalized Fractions}} of {{Sharp}} and {{Zakeri}}},
  author = {O'Carroll, Liam},
  year = {1983},
  month = dec,
  journal = {Journal of the London Mathematical Society},
  volume = {s2-28},
  number = {3},
  pages = {417--427}
}

@article{ohtaniGraphsIdealsGenerated2011,
  title = {Graphs and {{Ideals Generated}} by {{Some}} 2-{{Minors}}},
  author = {Ohtani, Masahiro},
  year = {2011},
  month = mar,
  journal = {Communications in Algebra},
  volume = {39},
  number = {3},
  pages = {905--917}
}

@article{peskineDimensionProjectiveFinie1973,
  title = {{Dimension projective finie et cohomologie locale}},
  author = {Peskine, Christian and Szpiro, Lucien},
  year = {1973},
  journal = {Publications math\'ematiques de l'IH\'ES},
  volume = {42},
  pages = {47--119}
}

@article{posaHamiltonianCircuitsRandom1976,
  title = {Hamiltonian {{Circuits}} in {{Random Graphs}}},
  author = {P{\'o}sa, L.},
  year = {1976},
  journal = {Discrete Mathematics},
  volume = {14},
  number = {4},
  pages = {359--364}
}

\end{document}